\documentclass[11pt]{article}

\pagestyle{plain}
\usepackage{amsmath,amssymb,amsthm,amsfonts,enumerate,graphicx}
\usepackage{amssymb, amsfonts,color}
\usepackage{amsmath}
\usepackage{latexsym}
\usepackage{mathrsfs}
\usepackage{graphicx}
\usepackage{enumerate}
\newtheorem{theorem}{Theorem}[section]
\newtheorem{lemma}[theorem]{Lemma}
\newtheorem{proposition}[theorem]{Proposition}
\newtheorem{corollary}[theorem]{Corollary}
\newtheorem{definition}{Definition}[section]

\numberwithin{equation}{section}

\pagestyle{plain} \setlength{\topmargin}{-12.5truemm}
\setlength{\textwidth}{150.4truemm}
\setlength{\textheight}{228.8truemm}
\setlength{\oddsidemargin}{3.6mm} \setlength{\evensidemargin}{3.6mm}
\setlength{\parindent}{5.0truemm}
\parskip .3ex
\renewcommand{\baselinestretch}{1.0}

\begin{document}

\title{Open set condition and pseudo Hausdorff measure of self-affine IFSs
\footnotetext{Math Subject Classifications. 28A80, 28A78.}
\footnotetext{Keywords. Self-affine sets, Iterated function system,  Open set condition, Pseudo norm, Pseudo Hausdorff measure, Upper Beurling density.}
\footnotetext{The research of Fu was supported by NSFC  grant 11401205.}
\footnotetext{The research of Qiu was supported by NSFC grant 11471157.}
\footnotetext{The research of Gabardo was supported by an NSERC grant.}
\footnotetext{Email:xiaoyefu@mail.ccnu.edu.cn, gabardo@mcmaster.ca, huaqiu@nju.edu.cn.}}
\author{Xiaoye Fu$^1$, Jean-Pierre Gabardo$^2$, Hua Qiu$^3$ \\
    \small {1.} School of Mathematics and Statistics\\
   \small and Hubei Key Laboratory of Mathematical Sciences, \\
    \small Central China Normal University, Wuhan 430079, P.R.China\\
     \small {2. } Department of Mathematics and Statistics, McMaster University,\\
   \small    Hamilton, Ontario, L8S 4K1, Canada\\
\small {3.} Department of Mathematics, Nanjing University, Nanjing, 210093, P.R.China}

\date{}
\maketitle

\begin{abstract}
Let $A$ be an $n\times n$ real expanding matrix and $\mathcal{D}$ be a finite subset of $\mathbb{R}^n$
with $0\in\mathcal{D}$. The family of maps $\{f_d(x)=A^{-1}(x+d)\}_{d\in\mathcal{D}}$ is called a self-affine iterated function system (self-affine IFS). The self-affine set $K=K(A,\mathcal{D})$ is the unique compact set determined by $(A, {\mathcal D})$ satisfying the set-valued equation $K=\displaystyle\bigcup_{d\in\mathcal{D}}f_d(K)$. The number $s=n\,\ln(\# \mathcal{D})/\ln(q)$ with $q=|\det(A)|$,
is the so-called pseudo similarity dimension of $K$. As shown  by He and Lau,
one can associate with $A$ and any number $s\ge 0$ a natural pseudo Hausdorff measure denoted by
$\mathcal{H}_w^s.$
In this paper, we show that, if $s$ is chosen to  be the pseudo similarity dimension of $K$, then  the condition
$\mathcal{H}_w^s(K)> 0$ holds if and only if  the IFS $\{f_d\}_{d\in\mathcal{D}}$ satisfies
the open set condition (OSC). This extends the well-known result for the self-similar case
that the OSC is equivalent to  $K$ having positive Hausdorff measure $\mathcal{H}^s$ for a suitable $s$.
Furthermore, we relate the exact value of pseudo Hausdorff measure
$\mathcal{H}_w^s(K)$ to a notion of upper
$s$-density with respect to the pseudo norm $w(x)$ associated with $A$ for the measure
$\mu=\lim\limits_{M\to\infty}\sum\limits_{d_0,\dotsc,d_{M-1}\in\mathcal{D}}\delta_{d_0 + Ad_1 + \dotsb + A^{M-1}d_{M-1}}$ in the case  that
$\#\mathcal{D}\le\lvert\det A\rvert$.
\end{abstract}

\renewcommand{\baselinestretch}{1.0}

\section{Introduction}

\begin{definition}\label{th1.1}
Let $M_n(\mathbb{R})$ denote the set of $n\times n$ matrices with real entries.
A matrix $A\in M_n(\mathbb{R})$ is called  expanding if all its eigenvalues $\lambda_i$ satisfy
$\lvert\lambda_i\rvert>1$.
A self-affine set in ${\mathbb R}^n$ is a compact set
$K\subseteq {\mathbb R}^n$ satisfying the set-valued equation $AK=\bigcup\limits_{d\in\mathcal{D}}(K+d)$, where $A\in M_n(\mathbb{R})$ is an expanding matrix and $\mathcal{D}\subseteq\mathbb{R}^n$ is a finite set of
distinct real vectors, which is called a digit set. $K$ is called a self-similar set  if $A$ is a similar matrix, i.e. $A = \rho R$, where $\rho>1$ and $R$ is an orthogonal matrix. To simplify the notations,
we let $q=|\det(A)|$.
\end{definition}

For an expanding matrix $A\in M_n(\mathbb{R})$ and a digit set $\mathcal{D}\subseteq\mathbb{R}^n$, it has been shown that the pair $(A, {\mathcal D})$ can uniquely determine a self-affine set $K:=K(A,\mathcal{D})$ (see \cite{H}). Given the pair $(A, {\mathcal D})$,
define
$$f_d(x)=A^{-1}(x+d), \ d\in\mathcal{D}.$$
The family of maps $\{f_d\}_{d\in\mathcal{D}}$ is called a \emph{self-affine iterated function system} (self-affine IFS).
An important property of these maps is that they are contractive with respect to a suitable norm on $\mathbb{R}^n$
(see \cite{LWG}). It is clear that the self-affine set $K:=K(A,\mathcal{D})$ determined by the pair $(A, {\mathcal D})$  satisfies $K=\bigcup\limits_{d\in\mathcal{D}}f_d(K)$.

\vspace{0.3cm}

\begin{definition}\label{th1.2}
For the pair $(A, {\mathcal D})$
as above,
we say that the IFS $\{f_d\}_{d\in\mathcal{D}}$ satisfies the open set condition (OSC) if there exists a non-empty bounded open set $V$ such that
\begin{eqnarray*}
\bigcup\limits_{d\in\mathcal{D}} f_d(V)\subset V \ {\rm{and}} \ f_d(V)\cap f_{d^{\prime}}(V)=\emptyset \ {\rm{for}} \ d\ne d^{\prime}\in\mathcal{D}.
\end{eqnarray*}
\end{definition}

The OSC is the most important separation condition in the theory of IFS and it is thus very useful
to find conditions equivalent to it.
When the IFS is self-similar, it is well-known \cite{S} that the OSC is equivalent to the self-similar set generated by the IFS having positive Hausdorff measure. For the self-affine case, He and Lau \cite{HL} showed that if the  OSC is satisfied, then the corresponding self-affine set has positive pseudo Hausdorff  measure.
This last measure is defined by using a pseudo norm constructed from the matrix $A$ instead of the
classical Euclidean norm. In this paper, we prove that the OSC is indeed equivalent
to the self-affine set generated by the IFS having
positive pseudo Hausdorff measure by showing that the converse also holds.

\vspace{0.3cm}
For an integer $M\ge 1$, consider the sets
\begin{eqnarray*}
{\mathcal D}_M = \bigg\{\sum\limits_{j=0}^{M-1} A^j d_j: d_j\in {\mathcal D}\bigg\}, \ \ \text{and} \ \ {\mathcal D}_{\infty} = \bigcup \limits_{M\ge 1} {\mathcal D}_M.
\end{eqnarray*}
Then ${\mathcal D}_M\subset {\mathcal D}_{M+1}$ for any $M\ge 1$ if $0\in{\mathcal D}$.
 Combining our results with those proved by He and Lau (Theorem 4.4 in \cite{HL}), we provide some
 conditions  equivalent to the OSC  for self-affine IFSs.

\begin{theorem}\label{th1.3}
The following conditions are equivalent.
\begin{enumerate}[(i)]
\item The IFS $\{f_d\}_{d\in{\mathcal D}}$ satisfies the OSC;
\item $0 < \mathcal{H}_w^s(K) < \infty$, where $s=n\,\ln(\# \mathcal{D})/\ln(q)$ and $\mathcal{H}_w^s(K)$ denotes the $s$-dimensional pseudo Hausdorff measure of $K$ generated by the IFS $\{f_d\}_{d\in{\mathcal D}}$;
\item  $\# {\mathcal D}_M = (\#{\mathcal D})^M$ and ${\mathcal D}_{\infty}$ is a uniformly discrete set, i.e. there exists $\delta > 0$ such that $\|x-y\|>\delta$ for any distinct elements $x,y$ of ${\mathcal D}_{\infty}$.
\end{enumerate}
\end{theorem}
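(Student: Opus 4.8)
The plan is to establish the cycle (i) $\Rightarrow$ (ii) $\Rightarrow$ (iii) $\Rightarrow$ (i), the implication (iii) $\Rightarrow$ (i) being contained in Theorem 4.4 of \cite{HL}. For (i) $\Rightarrow$ (ii), the inequality $\mathcal{H}_w^s(K) > 0$ under the OSC is exactly the theorem of He and Lau \cite{HL}, while $\mathcal{H}_w^s(K) < \infty$ holds with no separation hypothesis at all: for each $M$ the sets $f_{d_0}\circ\dots\circ f_{d_{M-1}}(K) = A^{-M}(K + e)$, $e \in \mathcal{D}_M$, cover $K$, each has pseudo diameter $q^{-M/n}\,\mathrm{diam}_w K$ with respect to the pseudo norm $w$, and there are at most $(\#\mathcal{D})^M$ of them, so the $s$-pre-measure of this cover is $\le (\#\mathcal{D})^M\big(q^{-M/n}\mathrm{diam}_w K\big)^s = (\mathrm{diam}_w K)^s$ because $q^{s/n} = \#\mathcal{D}$; letting $M \to \infty$ gives $\mathcal{H}_w^s(K) \le (\mathrm{diam}_w K)^s$. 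Thus the entire content of the theorem is the new implication (ii) $\Rightarrow$ (iii), which I would prove by adapting Schief's density argument for self-similar sets \cite{S} to the pseudo metric $\rho_w(x,y) := w(x-y)$; this is only a quasi-metric (the triangle inequality holds up to a fixed constant $C_w$), but with respect to it each $f_d$ is a similarity of ratio $q^{-1/n}$ and, crucially, every level-$M$ cylinder has the single common pseudo diameter $q^{-M/n}\mathrm{diam}_w K$.

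The first step is a mass-distribution comparison. Since $\mathcal{H}_w^s$ scales as $\mathcal{H}_w^s(A^{-M}X) = q^{-Ms/n}\mathcal{H}_w^s(X) = (\#\mathcal{D})^{-M}\mathcal{H}_w^s(X)$, one has $\sum_{e \in \mathcal{D}_M}\mathcal{H}_w^s\big(A^{-M}(K+e)\big) = \#\mathcal{D}_M\cdot(\#\mathcal{D})^{-M}\mathcal{H}_w^s(K)$, while $\bigcup_{e \in \mathcal{D}_M}A^{-M}(K+e) = K$ and $0 < \mathcal{H}_w^s(K) < \infty$. Comparing the two, one gets first that $\#\mathcal{D}_M = (\#\mathcal{D})^M$ for every $M$ — otherwise iterating the cover to level $kM$ would force its $s$-pre-measure, namely $(\#\mathcal{D}_M/(\#\mathcal{D})^M)^k(\mathrm{diam}_w K)^s$, to $0$ — and then, this being established, that the overlaps $A^{-M}(K+e) \cap A^{-M}(K+e')$ for $e \ne e' \in \mathcal{D}_M$ are $\mathcal{H}_w^s$-null. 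Hence the $(\#\mathcal{D})^M$ level-$M$ cylinders are almost disjoint and each carries pseudo Hausdorff mass $(\#\mathcal{D})^{-M}\mathcal{H}_w^s(K)$; this already yields the first clause of (iii).

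For the uniform discreteness of $\mathcal{D}_\infty$, I would use that $\mathcal{H}_w^s|_K$ is a finite Radon measure, so the standard upper density estimate — valid in $(\mathbb{R}^n, \rho_w)$ because this quasi-metric space is doubling, or available directly from \cite{HL} — provides a point $x_0 \in K$ with $\limsup_{r \to 0}\mathcal{H}_w^s(K \cap B_w(x_0, r))\,r^{-s} \le C_0$ for a universal constant $C_0$. For small $r$ pick $M = M(r)$ with $q^{-M/n}\mathrm{diam}_w K \le r < q^{-(M-1)/n}\mathrm{diam}_w K$; every level-$M$ cylinder meeting $B_w(x_0, r)$ has pseudo diameter $\le r$ and hence lies in $B_w(x_0, 2C_w r)$, and since these cylinders are almost disjoint with individual mass at least $c\,r^s$ (where $c = q^{-s/n}(\mathrm{diam}_w K)^{-s}\mathcal{H}_w^s(K) > 0$), summing their masses and comparing with $\mathcal{H}_w^s(K \cap B_w(x_0, 2C_w r)) \le C_0(2C_w r)^s$ bounds their number by a constant $N$ independent of $r$. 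Finally, because the transition maps between equal-length cylinders are the translations $x \mapsto x + (e - e')$ with $e, e' \in \mathcal{D}_M$, and because $f_v$ is a $\rho_w$-similarity for every word $v$, this bound near $x_0$ propagates — using the grading $\mathcal{D}_{M+1} = \mathcal{D} + A\mathcal{D}_M$ and the expansiveness of $A$ — to a uniform bound on the number of level-$M$ neighbours of every cylinder, which is the statement that $\mathcal{D}_\infty$ is uniformly discrete; this is the second clause of (iii), and the proof closes by invoking Theorem 4.4 of \cite{HL}.

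The hard part will be exactly this last propagation: turning the bounded-overlap estimate, which the density theorem yields only at the single point $x_0$, into the scale-free statement of (iii) for all cylinders simultaneously, while keeping all the quasi-metric constants ($C_w$, the doubling constant, $C_0$) under uniform control; the absence of a genuine triangle inequality for $\rho_w$ also makes the steps ``a cylinder meeting a ball lies in a slightly larger ball'' and the density/covering theory itself more delicate than in the Euclidean self-similar case of \cite{S}. Verifying the mass-distribution comparison rigorously is the only other point requiring care, and it is routine once $\mathcal{H}_w^s(K) < \infty$ is in hand.
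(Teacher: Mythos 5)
Your architecture (the cycle (i)$\Rightarrow$(ii)$\Rightarrow$(iii)$\Rightarrow$(i)) differs from the paper's, which proves (ii)$\Rightarrow$(i) directly by constructing an open set \`a la Schief and Bishop--Peres, and takes the whole equivalence (i)$\Leftrightarrow$(iii) from He--Lau. Your treatment of (i)$\Rightarrow$(ii), of (iii)$\Rightarrow$(i), and of the first clause of (iii) (that $\#\mathcal{D}_M=(\#\mathcal{D})^M$, via the iterated cover and the null-overlap count) is correct and matches what the paper does in Theorem 3.1 and Lemma 3.2. But the heart of the theorem --- getting from $0<\mathcal{H}_w^s(K)<\infty$ to a separation statement --- is exactly the part you leave open, and the mechanism you propose for it does not work as described. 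A density theorem gives you a \emph{single} point $x_0$ (or an a.e.\ statement) at which level-$M$ cylinders meeting $B_w(x_0,r)$, $r\sim q^{-M/n}$, number at most $N$. This does not propagate to a bound on the neighbour count of an \emph{arbitrary} cylinder: if some cylinder $K_{\mathbf i}$ far from $x_0$ has many $\varepsilon r_{\mathbf i}$-close incomparable neighbours, pushing that configuration forward by $f_{\mathbf j}$ places it inside $K_{\mathbf j}$, but not inside a ball about $x_0$ of radius comparable to the configuration's size, so the density bound at $x_0$ is never contradicted. Moreover, even granting a uniform neighbour-count bound for all cylinders, that is a bounded-density statement about $\mathcal{D}_\infty$ (at most $N$ points per unit ball), not uniform discreteness; to pass from one to the other you need the multiplication trick --- two digits of $\mathcal{D}_{M_1}$ within $\epsilon_1$ and two of $\mathcal{D}_{M_2}$ within $\epsilon_2<\epsilon/\lVert A\rVert^{M_1}$ produce $2^k$ digits of $\mathcal{D}_{M_1+\dotsb+M_k}$ in a single unit ball --- which the paper carries out in the second half of the proof of Theorem 6.2 and which your sketch omits entirely.

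The paper closes this gap by a genuinely different device, and you should compare it with your plan. Its Claim 1 extracts, from a near-optimal open cover $U\supset K$ with $\sum(\operatorname{diam}_wU_i)^s\le(1+x^s)\mathcal{H}_w^s(K)$, a \emph{uniform} separation: any two incomparable cylinders of comparable size are at Hausdorff $w$-distance at least $\delta r_{\mathbf i}$, because otherwise both would fit inside $f_{\mathbf i}(U)$ and the measure count would overflow. This separation holds for \emph{every} pair, not just pairs near a density point. Finiteness of $\gamma=\sup_{\mathbf k}\#I(\mathbf k)$ then follows from the Blaschke selection theorem (Theorem 3.3), since the rescaled neighbours $f_{\mathbf k}^{-1}(K_{\mathbf i})$ are pairwise separated compact subsets of a fixed compact set; and the maximality of $\gamma$ at some $\mathbf k$ is what globalizes the estimate (Claim 3: $I(\mathbf{jk})=\{\mathbf{ji}:\mathbf i\in I(\mathbf k)\}$ for all $\mathbf j$), yielding the separation of Claim 4 and the explicit open set of Claim 5. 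If you want to keep your route to (iii) rather than (i), you would still need Claim 1 (or an equivalent uniform mechanism) to control all cylinders simultaneously, and then the cluster-multiplication argument to convert bounded overlap into uniform discreteness; the single-density-point argument by itself supplies neither.
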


\vspace{0.3cm}

For the proof of Theorem \ref{th1.3}, we utilize the connection between pseudo norm and Euclidean norm
as well as  the technique used by Schief \cite{S}, Bishop and Peres \cite{BP} for the self-similar case.
We also would like to mention that there have been several equivalent characterizations for the OSC under special cases given by Lagarias and Wang  (Theorem 1.1 in \cite{LWG}), by He and Lau (Theorem 4.4 in \cite{HL}) and by Fu and Gabardo (Theorem 3.2 in \cite{FG}).

\vspace{0.3cm}

In fractal geometry, one of the classical questions  is to study
the Hausdorff dimension and the corresponding Hausdorff
measure of the self-affine set
$K(A,\mathcal{D})$ determined by the pair  $(A, {\mathcal D})$.

\vspace{0.3cm}

In the case that $K(A, {\mathcal D})$ has positive Lebesgue measure and
$\#\mathcal{D} = \lvert\det A\rvert \in\mathbb{Z}$, $K$ is called
\emph{a self-affine tile} and the corresponding set ${\mathcal D}$ is
called \emph{a tile digit set}, where $\#\mathcal{D}$ denotes the
number of elements in $\mathcal{D}$. The Lebesgue measure and many
aspects of the theory of self-affine tiles including the structure
and tiling properties, the connection to wavelet theory,
the fractal structure of the boundaries and the classification of tile digit sets
have been investigated thoroughly
(see e.g. \cite{LWG, LWN, GY, GHS, LWA, GM, LW, LLR2012, LLR2013}).

\vspace{0.3cm}

The situation becomes more complicate when $\#\mathcal{D} >q:= \lvert\det A\rvert$
because the sets $K+d$, $d\in\mathcal{D}$, might overlap.
He, Lau and Rao \cite{HLR} considered the problem as to whether or not
the Lebesgue measure of $K(A,\mathcal{D})$ is positive for this case. Qiu \cite{Q}
provided an algorithm for calculating the Hausdorff measure
of a special class of Cantor sets $K(A,\mathcal{D}) \subset \mathbb{R}$ with overlaps.

\vspace{0.3cm}

It is easy to see that the Lebesgue measure of $K(A,\mathcal{D})$ is $0$ if
$\#\mathcal{D} < q$, a situation which has motivated many researchers to study the Hausdorff dimension and Hausdorff measure of such sets $K(A,\mathcal{D})$.
For self-similar sets satisfying  certain separating conditions
(e.g. open set condition \cite{F}, weak separation condition \cite{LN,LNR}, finite type condition \cite{NW}), there exist methods to calculate
their Hausdorff  dimensions \cite{F, HLR, NW, SS} and the corresponding Hausdorff   measures \cite{AS, FG, J, J2007, JZZ2002, Xiong2005, ZL2000, ZF2000}.
However, no many results are available in that direction for  self-affine sets.
The  difficulty stems from the non-uniform contraction in different directions, in contrast
to the self-similar case where
the contraction is uniform in every direction. In \cite{HL}, He and Lau
defined a pseudo norm $w(x)$ associated with  the matrix $A$ and
replaced the Euclidean norm by this pseudo norm to define the Hausdorff dimension and the Hausdorff measure for subsets in $\mathbb{R}^n$. They called these the \emph{pseudo Hausdorff dimension} $\dim_H^w$ and
the \emph{pseudo Hausdorff measure} $\mathcal{H}_w^s$, respectively. This setup gives a convenient estimation to the classical Hausdorff dimension  of $K(A,\mathcal{D})$ and, furthermore, it makes $K(A,\mathcal{D})$ have a structure similar
to that of a self-similar set since the pseudo norm defined in terms of $A$ absorbs the non-uniform contractivity from $A$.

\vspace{0.3cm}

In this paper, we are interested in the computation of the pseudo Hausdorff measure of self-affine sets in the case that $\#\mathcal{D}\le q$. This is motivated by the results in \cite{FG}, which gave an exact expression for the Lebesgue measure of $K(A,\mathcal{D})$ with $\#\mathcal{D} = q$ and the Hausdorff measure of the self-similar set $K(A,\mathcal{D})$ associated with its similarity dimension in the case that $\#\mathcal{D} \le q$.
One of the main results of this paper is to relate the pseudo Hausdorff measure of $K(A,\mathcal{D})$,
namely
$\mathcal{H}_w^s(K(A,\mathcal{D}))$ where $s=n\,\ln(\# \mathcal{D})/\ln(q)$ is the
pseudo similarity dimension of $K$,
to a  notion of upper density with respect to (w.r.t.) $w(x)$ for the measure $\mu$ which is defined by
\begin{eqnarray}\label{e-1-1}
\mu=\lim\limits_{M\to\infty}\sum\limits_{d_0,\dotsc,d_{M-1}\in\mathcal{D}}\delta_{d_0+Ad_1+\dotsb+A^{M-1}d_{M-1}}.
\end{eqnarray}

\begin{theorem}\label{th1.4}
Let $K: = K(A, \mathcal {D})$ be a self-affine set and let $s=n\,\ln(\# \mathcal{D})/\ln(q)$ be the pseudo similarity dimension of $K$. Then $\mathcal{H}_w^s(K)=(\mathcal{E}_{w, s}^+(\mu))^{-1}$, where $\mu$ is defined by (\ref{e-1-1}) and $\mathcal{E}_{w, s}^+(\mu)$ is the upper $s$-density of $\mu$ w.r.t. $w(x)$  defined by
$$
\mathcal{E}_{w,s}^+(\mu) = \lim\limits_{r\rightarrow\infty}\sup\limits_{\text{diam}_w U \ge r>0}\frac{\mu(U)}{(\text{diam}_w U)^s},
$$
where the supremum is over all convex sets $U$ with $diam_w U\ge r>0$ w.r.t. w(x).
\end{theorem}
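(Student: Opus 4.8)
The plan is to derive the identity from the two complementary estimates $\mathcal{H}_w^s(K)\ge(\mathcal{E}_{w,s}^+(\mu))^{-1}$ and $\mathcal{H}_w^s(K)\le(\mathcal{E}_{w,s}^+(\mu))^{-1}$, using as a bridge the natural self-similar (Hutchinson) probability measure $\nu$ on $K$, i.e.\ the unique Borel probability measure with $\nu=(\#\mathcal{D})^{-1}\sum_{d\in\mathcal{D}}\nu\circ f_d^{-1}$ and $\mathrm{supp}\,\nu=K$. Three elementary facts are used throughout: (a) $w(Ax)=q^{1/n}w(x)$, so $\mathrm{diam}_w(A^{-M}U)=q^{-M/n}\mathrm{diam}_w U$ for every $U$; (b) $(\#\mathcal{D})^M=q^{sM/n}$, which is just the definition of $s$; and (c) the iterated set equation $A^MK=K+\mathcal{D}_M$, so that the level-$M$ cylinders $f_{d_0}\circ\cdots\circ f_{d_{M-1}}(K)=A^{-M}(K+(d_0+Ad_1+\cdots+A^{M-1}d_{M-1}))$ are rescaled translates of $K$ whose translation vectors, counted with multiplicity, are enumerated by $\mu_M:=\sum_{d_0,\dots,d_{M-1}\in\mathcal{D}}\delta_{d_0+Ad_1+\cdots+A^{M-1}d_{M-1}}$. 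Since $0\in\mathcal{D}$ one has $\mu_M\le\mu_{M+1}$, whence $\mu=\lim_M\mu_M=\sup_M\mu_M$ as a (possibly infinite) Borel measure; a short computation shows that the multiplicity of a cylinder $A^{-M}(K+p)$, $p\in\mathcal{D}_M$, equals $\mu_M(\{p\})$, and combined with the cylinder formula for $\nu$ this gives the pointwise bound $\nu(A^{-M}(K+p))\ge(\#\mathcal{D})^{-M}\mu_M(\{p\})$, which is the precise mechanism by which the overlaps recorded by $\mu$ control $\nu$ and hence $\mathcal{H}_w^s$.

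For the lower bound I would first prove $\nu(V)\le\mathcal{E}_{w,s}^+(\mu)\,(\mathrm{diam}_w V)^s$ for every bounded convex set $V$. Approximating $\nu$ weakly by $(\#\mathcal{D})^{-M}\sum_{d_0,\dots,d_{M-1}}\delta_{f_{d_0}\circ\cdots\circ f_{d_{M-1}}(0)}$ and observing that $f_{d_0}\circ\cdots\circ f_{d_{M-1}}(0)$ lies in an open convex $V'\supset\overline{V}$ exactly when the corresponding point of $\mathcal{D}_M$ lies in the convex set $A^MV'$, one obtains $\nu(V)\le\liminf_M(\#\mathcal{D})^{-M}\mu_M(A^MV')\le\liminf_M(\#\mathcal{D})^{-M}\mu(A^MV')$; since $\mathrm{diam}_w(A^MV')=q^{M/n}\mathrm{diam}_w V'\to\infty$ and $(\#\mathcal{D})^{-M}=q^{-sM/n}$, the definition of $\mathcal{E}_{w,s}^+(\mu)$ gives $\nu(V)\le\mathcal{E}_{w,s}^+(\mu)(\mathrm{diam}_w V')^s$, and letting $V'\downarrow\overline{V}$ and using continuity of $w$ yields the claim. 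The mass distribution principle for $\mathcal{H}_w^s$ then gives $\mathcal{H}_w^s(K)\ge\nu(K)/\mathcal{E}_{w,s}^+(\mu)=(\mathcal{E}_{w,s}^+(\mu))^{-1}$; in particular this disposes of the degenerate case $\mathcal{E}_{w,s}^+(\mu)=0$, which would force $\mathcal{H}_w^s(K)=\infty$, contradicting the trivial bound $\mathcal{H}_w^s(K)\le(\mathrm{diam}_w K)^s$.

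For the upper bound, fix $\varepsilon>0$ and pick a convex set $U$ with $\mathrm{diam}_w U$ as large as we please and $\mu(U)\ge(\mathcal{E}_{w,s}^+(\mu)-\varepsilon)(\mathrm{diam}_w U)^s$, then choose $M$ large so that $\mu_M(U)$ is already this large. All cylinders $A^{-M}(K+p)$ with $p\in\mathcal{D}_M\cap U$ lie inside the convex set $B:=A^{-M}(\mathrm{conv}(U)+\mathrm{conv}(K))$, of $w$-diameter at most $q^{-M/n}(\mathrm{diam}_w U+\mathrm{diam}_w K)$, and by the multiplicity estimate $\nu(B)\ge(\#\mathcal{D})^{-M}\mu_M(U)$; hence $\nu(B)/(\mathrm{diam}_w B)^s\ge(\mathcal{E}_{w,s}^+(\mu)-\varepsilon)(\mathrm{diam}_w U/(\mathrm{diam}_w U+\mathrm{diam}_w K))^s\ge c_0$ for any prescribed $c_0<\mathcal{E}_{w,s}^+(\mu)$, once $\varepsilon$ is small and $\mathrm{diam}_w U$ large, while $\mathrm{diam}_w B$ can be made arbitrarily small. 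Transporting $B$ by the maps $f_{e_0}\circ\cdots\circ f_{e_{N-1}}$, one checks that $f_{\vec e}(B)$ is convex with the same (or larger) density ratio; a recursive filling argument (cover a definite $\nu$-proportion of $K$ by such blocks, then recurse on the remaining copies of $K$) shows that, for each $\delta>0$, the family of convex sets $V$ with $\mathrm{diam}_w V<\delta$ and $\nu(V)\ge c_0(\mathrm{diam}_w V)^s$ is a Vitali class covering $\nu$-almost every point of $K$. The Vitali covering theorem for $\mathcal{H}_w^s$ then yields a disjoint subfamily $\{V_i\}$ with $\mathcal{H}_w^s\bigl(K\setminus\bigcup_iV_i\bigr)=0$ and $\sum_i(\mathrm{diam}_w V_i)^s\le c_0^{-1}\nu\bigl(\bigcup_iV_i\bigr)\le c_0^{-1}$, so $\mathcal{H}_w^s(K)\le c_0^{-1}$; letting $c_0\uparrow\mathcal{E}_{w,s}^+(\mu)$ finishes this case, and when $\mathcal{E}_{w,s}^+(\mu)=\infty$ it already gives $\mathcal{H}_w^s(K)=0$.

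I expect the delicate point to be exactly the passage that makes the constant sharp (equal to $1$): one must (i) ensure that the efficient blocks give a genuine Vitali class at $\nu$-a.e.\ point of $K$, i.e.\ that the rescaled copies of $B$ can be arranged to \emph{contain} the points they cover rather than merely to lie near them, and (ii) guarantee that the leftover $K\setminus\bigcup_iV_i$ does not contribute to $\mathcal{H}_w^s$. It is natural here to separate the two cases of Theorem~\ref{th1.3}: when the OSC holds, $\nu$ and $\mathcal{H}_w^s|_K$ are comparable (indeed proportional), which removes the leftover issue and makes the convex upper $w$-density of the reference measure a genuine constant; when the OSC fails, one shows directly from $\#\mathcal{D}_M<(\#\mathcal{D})^M$ or from the failure of uniform discreteness of $\mathcal{D}_\infty$ that $\mathcal{E}_{w,s}^+(\mu)=\infty$, which matches $\mathcal{H}_w^s(K)=0$.
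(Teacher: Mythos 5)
Your proposal reaches the correct identity but by a genuinely different route from the paper's in the main (OSC) case. The paper uses neither the mass distribution principle nor a Vitali/filling argument for Theorem~\ref{th1.4}: it first establishes the local identity $(\mathcal{H}_w^s(K))^{-1}=\sup_{0<\mathrm{diam}_w U\le r_0}\sigma(U)/(\mathrm{diam}_w U)^s$ (Lemma~\ref{t-6-1}), which rests on the density theorem $D_{w,c}^s(K,x)=1$ for $\mathcal{H}_w^s$-a.e.\ $x\in K$ (Theorem~\ref{t-4-1}) for one inequality and on the Olsen-type bound $\mathcal{H}_w^s(K\cap U)\le(\mathrm{diam}_w U)^s$ (Lemma~\ref{t-6-0}) for the other; it then converts this local supremum into the global density $\mathcal{E}_{w,s}^+(\mu)$ by the convolution invariance $\mathcal{E}_{w,s}^+(\mu*\sigma)=\mathcal{E}_{w,s}^+(\mu)$ (Lemma~\ref{t-5-1}) combined with the exact rescaling $\sigma(A^{-M}W)=(\#\mathcal{D})^{-M}(\mu_M*\sigma)(W)$ (Lemma~\ref{t-5-2}). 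Your bridge measure $\nu$ is the same invariant measure $\sigma$, and your treatment of the non-OSC case (deducing $\mathcal{E}_{w,s}^+(\mu)=\infty$ from the failure of $\#\mathcal{D}_M=(\#\mathcal{D})^M$ or of uniform discreteness of $\mathcal{D}_\infty$) coincides with the paper's. What your route buys is independence from the density machinery of Section~4; in particular your lower bound, once valid, also disposes of the non-OSC case automatically, since $\mathcal{H}_w^s(K)\ge(\mathcal{E}_{w,s}^+(\mu))^{-1}$ together with $\mathcal{H}_w^s(K)=0$ forces $\mathcal{E}_{w,s}^+(\mu)=\infty$. What the paper's route buys is that all the sharp-constant work is isolated in Theorem~\ref{t-4-1} and Lemma~\ref{t-6-0}, after which the passage from $\sigma$ to $\mu$ is a purely formal rescaling.

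Two steps in your sketch need more care, both caused by the failure of the triangle inequality for $w$. First, the bound $\mathrm{diam}_w\bigl(A^{-M}(\mathrm{conv}(U)+\mathrm{conv}(K))\bigr)\le q^{-M/n}(\mathrm{diam}_w U+\mathrm{diam}_w K)$ is false as written; only $w(x+y)\le\beta\max\{w(x),w(y)\}$ is available, and to make the density ratio tend to $1$ you must invoke the asymptotic almost-additivity of Lemma~\ref{t-2-4} (exactly as the paper does in proving Lemma~\ref{t-5-1}) to get $\mathrm{diam}_w(U+\mathrm{conv}(K))\le(1+\epsilon)\,\mathrm{diam}_w U$ once $\mathrm{diam}_w U$ is large. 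The same issue touches your mass distribution step: the estimate $\nu(V)\le\mathcal{E}_{w,s}^+(\mu)(\mathrm{diam}_w V)^s$ is proved only for convex $V$, and replacing an arbitrary covering set by its convex hull can increase $\mathrm{diam}_w$, so you must either extend the estimate to arbitrary bounded sets or justify that convex covers compute $\mathcal{H}_w^s$. Second, your ``recursive filling'' has to produce good blocks containing $\nu$-a.e.\ point at \emph{arbitrarily small} scales (a Vitali class demands all scales, not one scale per point); the standard repair is a generation-wise Borel--Cantelli argument, which uses $\sigma(f_{\mathbf{i}}(W))=(\#\mathcal{D})^{-|\mathbf{i}|}\sigma(W)$ exactly, i.e.\ the no-overlap property from Lemma~\ref{t-3-2} -- so this half of your argument genuinely requires the OSC, consistent with the case split you propose at the end.
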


We will divide the proof of Theorem \ref{th1.4} into two cases, (i) and (ii),
with the case (i) corresponding to the situation where the IFS $\{f_d\}_{d\in\mathcal{D}}$ satisfies the OSC
and the case (ii) where it does not.

\vspace{0.3cm}

It follows from  Theorem \ref{th1.3} that if the IFS $\{f_d\}_{d\in{\mathcal D}}$ satisfies the OSC, then $K: = K(A, \mathcal {D})$ is an $s$-set w.r.t. $w(x)$.  By  analyzing the upper convex $s$-density w.r.t. $w(x)$ of points in $K$, we have the following expression of $\mathcal{H}_w^s(K)$.

\begin{lemma}\label{th1.5}
Let $K:= K(A,\mathcal{D})$ be the self-affine set associated with an
IFS $\{f_d\}_{d\in\mathcal{D}}$ satisfying the OSC.
Let $s=n\,\ln(\# \mathcal{D})/\ln(q)$ and let $\sigma$ be the invariant measure supported on $K$ satisfying
$$
\int f \ d\sigma=\frac{1}{\#\mathcal{D}}\sum\limits_{d\in\mathcal{D}}\int f\circ f_d \ d\sigma
$$
for any compactly supported continuous function $f$ on $\mathbb{R}^n$.
Then,  for any $r_0>0$,
$$
(\mathcal{H}_w^s(K))^{-1}=\sup\limits_{0<\text{diam}_w U\le r_0}\frac{\sigma(U)}{(\text{diam}_w U)^s},
$$
 where the supremum is taken over all convex sets $U$ with $U\bigcap K\ne\emptyset$ and
 $0<\text{diam}_w U\le r_0$.
\end{lemma}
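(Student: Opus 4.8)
The plan is to establish the identity by comparing two "density at points of $K$" quantities: an upper convex $s$-density and the supremum appearing in the statement. Since the OSC holds, Theorem~\ref{th1.3} gives $0<\mathcal{H}_w^s(K)<\infty$, so $K$ is an $s$-set with respect to the pseudo norm $w(x)$, and one may invoke the general density theorem for $s$-sets in the pseudo-metric setting (the analogue of the classical result that for an $s$-set $F$ and $\mathcal{H}_w^s$-a.e.\ $x\in F$ the upper convex density $\overline{D}^s_c(F,x)$ lies between $2^{-s}$ and $1$, together with $\sup_{x\in F}\overline{D}^s_c(F,x)=1$). The key object linking $\sigma$ to $\mathcal{H}_w^s\!\restriction_K$ is the observation that, under the OSC, the invariant measure $\sigma$ is (up to the normalizing constant $\mathcal{H}_w^s(K)$) exactly the restriction of $\mathcal{H}_w^s$ to $K$; this is because $\sigma$ is the unique Borel probability measure satisfying the self-referential averaging identity, and the normalized restriction $\mathcal{H}_w^s(K)^{-1}\,\mathcal{H}_w^s\!\restriction_K$ satisfies the same identity, using that each $f_d$ scales $w$-diameters by the factor $q^{-1/n}$ (hence scales $\mathcal{H}_w^s$ by $(\#\mathcal{D})^{-1}$, since $q^{-s/n}=(\#\mathcal{D})^{-1}$) and that the pieces $f_d(K)$ overlap in an $\mathcal{H}_w^s$-null set under the OSC.

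First I would record the scaling behaviour of $w$ and $\mathcal{H}_w^s$ under the maps $f_d$: $\operatorname{diam}_w f_d(E)=q^{-1/n}\operatorname{diam}_w E$ for every set $E$, and consequently $\mathcal{H}_w^s(f_d(E))=(\#\mathcal{D})^{-1}\mathcal{H}_w^s(E)$. Next I would verify the overlap between distinct $f_d(K)$ is $\mathcal{H}_w^s$-null: this follows from the OSC via Theorem~\ref{th1.3}(iii), which makes $\mathcal{D}_\infty$ uniformly discrete and the coding of $K$ essentially one-to-one off a null set, exactly as in Schief's argument. From these two facts, $\nu:=\mathcal{H}_w^s(K)^{-1}\,\mathcal{H}_w^s\!\restriction_K$ is a Borel probability measure on $K$ satisfying the invariance identity, so by uniqueness $\sigma=\nu$, i.e.\ $\sigma(U)=\mathcal{H}_w^s(K)^{-1}\,\mathcal{H}_w^s(U\cap K)$ for every Borel $U$.

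With $\sigma$ identified, the statement reduces to
$$
\sup_{0<\operatorname{diam}_w U\le r_0}\frac{\mathcal{H}_w^s(U\cap K)}{(\operatorname{diam}_w U)^s}=1
$$
over convex sets $U$ meeting $K$, for every fixed $r_0>0$. The inequality "$\le 1$" is the standard upper density bound: for an $s$-set, any set $U$ of small $w$-diameter satisfies $\mathcal{H}_w^s(U\cap K)\le (\operatorname{diam}_w U)^s$, which is immediate from the definition of $\mathcal{H}_w^s$ as an infimum over covers (a single convex set $U$ is an admissible cover of $U\cap K$). The reverse inequality "$\ge 1$" is the heart of the matter: I would use the density theorem for $s$-sets in the pseudo-metric space $(\mathbb{R}^n,w)$ to produce, for any $\varepsilon>0$, a point $x\in K$ with $\overline{D}^s_c(\mathcal{H}_w^s\!\restriction_K,x)>1-\varepsilon$, hence convex sets $U$ of arbitrarily small $w$-diameter (in particular with $\operatorname{diam}_w U\le r_0$) containing $x$ and satisfying $\mathcal{H}_w^s(U\cap K)>(1-\varepsilon)(\operatorname{diam}_w U)^s$; letting $\varepsilon\to0$ gives the supremum value $1$. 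To run the density theorem one needs a Vitali-type covering lemma for convex sets in the pseudo norm $w$; this is available because, as shown by He and Lau, $w$ is comparable to the Euclidean norm on dyadic scales and balls in $w$ have bounded eccentricity in a controlled way, so the classical $5r$-covering argument carries over. The main obstacle I anticipate is precisely this last point—verifying that the geometry of convex sets under the pseudo norm $w(x)$ is regular enough (bounded overlap, Besicovitch/Vitali covering) for the $s$-set density theorem to apply; once that is in place, the two inequalities above finish the proof, and the value of the supremum being exactly $1$ (independent of $r_0$) yields $(\mathcal{H}_w^s(K))^{-1}=\sup_{0<\operatorname{diam}_w U\le r_0}\sigma(U)/(\operatorname{diam}_w U)^s$.
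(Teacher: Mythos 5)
Your overall architecture matches the paper's: identify $\sigma$ with $(\mathcal{H}_w^s(K))^{-1}\mathcal{H}_w^s\restriction K$ (this is the paper's Lemma~\ref{t-3-2}, and your justification via scaling plus null overlaps plus uniqueness is sound), then reduce the claim to showing that the supremum of $\mathcal{H}_w^s(K\cap U)/(\mathrm{diam}_w U)^s$ over small convex sets $U$ meeting $K$ equals $1$, with the lower bound coming from the a.e.\ density theorem (the paper's Theorem~\ref{t-4-1}, whose Vitali machinery in the pseudo-norm is indeed set up in Section~4 essentially as you anticipate). The gap is in the upper bound. You assert that $\mathcal{H}_w^s(U\cap K)\le(\mathrm{diam}_w U)^s$ is ``immediate from the definition of $\mathcal{H}_w^s$ as an infimum over covers (a single convex set $U$ is an admissible cover of $U\cap K$).'' This is false: the single-set cover $\{U\}$ only bounds $\mathcal{H}_{w,\delta}^s(U\cap K)$ for $\delta\ge\mathrm{diam}_w U$, whereas $\mathcal{H}_w^s(U\cap K)=\sup_{\delta>0}\mathcal{H}_{w,\delta}^s(U\cap K)$ is obtained by letting $\delta\to0$, where that cover is no longer admissible. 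The inequality genuinely fails for general $s$-sets: for the $1$-set $F=([0,1]\times\{0\})\cup(\{0\}\times[0,1])\subset\mathbb{R}^2$ and $U$ its convex hull, $\mathcal{H}^1(F\cap U)=2>\sqrt{2}=\mathrm{diam}\,U$. Note also that the a.e.\ statement $D^s_{w,c}(K,x)\le1$ does not rescue you, since it is a $\limsup$ as the diameter tends to $0$ and says nothing about a fixed $U$ of diameter $\le r_0$.

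The missing ingredient is the paper's Lemma~\ref{t-6-0}: under the OSC, $\mathcal{H}_w^s(K\cap U)\le(\mathrm{diam}_w U)^s$ holds for \emph{every} set $U$. Its proof (adapted from Olsen) is not a one-liner: one assumes $\mathcal{H}_w^s(K\cap U)>(\mathrm{diam}_w U)^s$, pushes $U$ forward by all maps $f_{\mathbf{i}}$, $\mathbf{i}\in\Sigma^m$, uses $\sum_{\mathbf{i}\in\Sigma^m}r_{\mathbf{i}}^s=1$ and the $\mathcal{H}_w^s$-null overlaps to build a $\delta$-cover of $K$ that is strictly more efficient than $\mathcal{H}_w^s(K)$ allows, a contradiction. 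This step uses the self-affine structure and the OSC in an essential way, not just the fact that $K$ is an $s$-set, so it cannot be replaced by an appeal to the definition of Hausdorff measure. Everything else in your outline is consistent with the paper's route.
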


For case (i), Theorem \ref{th1.4} will follow from Lemma \ref{th1.5}
after we prove that
$$
\mathcal{E}_{w, s}^+(\mu)=\sup\limits_{0<\text{diam}_w U\le r_0}\frac{\sigma(U)}{(\text{diam}_w U)^s}.
$$
For case (ii), we show $\mathcal{E}_{w, s}^+(\mu)= \infty$  by using the third equivalent condition in Theorem \ref{th1.3}.

\vspace{0.3cm}

The paper is organized as follows. In Section 2, we collect some definitions and some known results on pseudo norm, pseudo Hausdorff dimension and pseudo Hausdorff measures
that we will use. In Section 3, we prove Theorem \ref{th1.3}. Some properties of  upper convex $s$-density w.r.t. $w(x)$ of points in $K(A,\mathcal{D})$ and the upper $s$-density of $\mu$ w.r.t. $w(x)$ are investigated respectively in Section 4 and in Section 5. In Section 6,  Lemma \ref{th1.5} and Theorem \ref{th1.4} are proved.

\section{Preliminaries}

In this section, we recall the notions of pseudo norm and pseudo Hausdorff measure defined in \cite{HL}.
and collect some known results about these that we will
use later.

\vspace{0.3cm}

Let $A\in M_n(\mathbb{R})$ be expanding with  $q:=|\det A| \in {\mathbb R}$. We can assume without loss of generality that $A$ has the property that $\|x\| \le \|Ax\|$ and equality holds only for $x=0$, where the norm $\|\cdot\|$ is the Euclidean norm, since $\|\cdot\|$ in ${\mathbb R}^n$ can be renormed with an equivalent norm $\|\cdot\|^{'}$ so that $\|x\|^{'}< \|Ax\|^{'}$ for all $0\ne x\in{\mathbb R}^n$ \cite{LWG}. He and Lau \cite{HL} introduced a \emph{pseudo norm $w(x)$} associated with $A$ as follows:
\begin{itemize}
\item For $0 < \delta < 1/2$, choose a positive function $\phi_{\delta}(x)\in C^{\infty}(\mathbb{R}^n)$ with support in $B_{\delta}:=B(0, \delta)$ (the closed ball centered at $0$ with radius $\delta$) such that $\phi_{\delta}(x)=\phi_{\delta}(-x)$ and $\int \phi_{\delta}(x) \ dx = 1$.

\item Let $V = AB_1\setminus B_1$ and $h(x) = \chi_V * \phi_{\delta}(x)$. Define
\begin{eqnarray}\label{e-2-1}
w(x) = \sum\limits_{j = -\infty}^{\infty} q^{-\frac{j}{n}}h(A^j x), \ x\in\mathbb{R}^n.
\end{eqnarray}
\end{itemize}
Note that $V$ is an annular region by our convention that $\|x\| < \|Ax\|$ for $x\ne 0$. It is clear that ${\mathbb R}^n\setminus \{0\} = \bigcup\limits_{k\in\mathbb{Z}}A^k V$, where the union is disjoint.

\vspace{0.3cm}

\begin{proposition}[\cite{HL}]\label{t-2-1}
The $w(x)$ defined in (\ref{e-2-1}) is a $C^{\infty}$ function on $\mathbb{R}^n$ and
satisfies
\begin{enumerate}[(i)]
\item $w(x) = w(-x)$, $w(x) = 0\Leftrightarrow x = 0$;
\item $w(Ax) = q^{1/n}w(x)$, $x\in\mathbb{R}^n$;
\item there exists an integer $p>0$ such that for each $x\in{\mathbb R}^n$, the sum in (\ref{e-2-1}) has at most $p$ non-zero terms and $\alpha \le w(x) \le pq^{p/n}, x\in V$, where $\alpha = \inf_{x\in V} h(x) >0$.
\end{enumerate}
\end{proposition}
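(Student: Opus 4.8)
The plan is to work directly with the series (\ref{e-2-1}), treating it term by term and exploiting two elementary facts. First I would record the geometry of $h=\chi_V*\phi_\delta$: it is $C^\infty$ as a mollification of $\chi_V$, its sup-norm obeys $\|h\|_\infty\le\|\chi_V\|_\infty\|\phi_\delta\|_1=1$ by Young's inequality, and its support lies in a fixed shell bounded away from $0$ and $\infty$. Indeed, since points of $V\subseteq AB_1\setminus B_1$ satisfy $\|y\|>1$ while $AB_1\subseteq\{\|y\|\le\|A\|\}$, and $\operatorname{supp}\phi_\delta\subseteq B_\delta$ with $\delta<1/2$, one gets $\operatorname{supp}h\subseteq V+B_\delta\subseteq\{y:r_1\le\|y\|\le r_2\}$ with $r_1=\tfrac12$, $r_2=\|A\|+\delta$. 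Second, by compactness of the unit sphere together with the standing normalization $\|x\|<\|Ax\|$ for $x\ne0$, the constant $\lambda_0:=\min_{\|x\|=1}\|Ax\|$ satisfies $\lambda_0>1$, whence $\|A^jx\|\ge\lambda_0^{\,j}\|x\|$ for $j\ge0$ and $\|A^jx\|\le\lambda_0^{\,j}\|x\|$ for $j\le0$, and the consecutive ratios satisfy $\|A^{j+1}x\|/\|A^jx\|\ge\lambda_0$.

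These drive the technical core, a local-finiteness estimate. The $j$-th term is nonzero only when $A^jx\in\operatorname{supp}h$, i.e. $r_1\le\|A^jx\|\le r_2$; since $\|A^jx\|$ grows by a factor $\ge\lambda_0>1$ at each step, at most $\lfloor\log_{\lambda_0}(r_2/r_1)\rfloor+1=:p$ indices can meet this, which is the first assertion of (iii). For smoothness, fix $x_0\ne0$ and a ball $N=B(x_0,\rho)\subseteq\{a\le\|x\|\le b\}$ with $0<a<b$; the inequalities above force every nonzero index to lie in the finite range $\log_{\lambda_0}(r_1/b)\le j\le\log_{\lambda_0}(r_2/a)$, uniformly in $x\in N$, so on $N$ the series is a finite sum of $C^\infty$ functions and hence $w\in C^\infty(\mathbb{R}^n\setminus\{0\})$. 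At the origin, $w(0)=0$ since $0\notin\operatorname{supp}h$, and using the functional equation (ii) proved below one writes a small $x\ne0$ as $x\in A^kV$ with $k\to\infty$, so $w(x)=q^{-k/n}w(A^kx)$ with $A^kx$ confined to the bounded shell $V$, giving $w(x)\to0$; thus $w$ is continuous on $\mathbb{R}^n$. I would flag here that this is the delicate point: $w$ is genuinely only $C^\infty$ \emph{off} the origin, behaving near $0$ like a norm (already in dimension one $w(x)\asymp|x|$), so the smoothness clause should be read as $C^\infty$ on $\mathbb{R}^n\setminus\{0\}$ together with continuity at $0$.

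The algebraic identities are then routine. Reindexing gives (ii): $w(Ax)=\sum_j q^{-j/n}h(A^{j+1}x)=q^{1/n}\sum_i q^{-i/n}h(A^ix)=q^{1/n}w(x)$. For (i), $V$ is symmetric ($-B_1=B_1$ yields $-V=V$) and $\phi_\delta$ is even, so $h$ is even and $w(-x)=w(x)$; moreover $w(0)=0$, while for $x\ne0$ there is a unique $k$ with $A^{-k}x\in V$, and isolating the $j=-k$ term (all terms being nonnegative) yields $w(x)\ge q^{k/n}h(A^{-k}x)\ge q^{k/n}\alpha>0$, once $\alpha>0$ is known.

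Finally I would close the bounds in (iii). Nonnegativity of every term gives, for $x\in V$, $w(x)\ge h(x)\ge\inf_V h=\alpha$ at once; for the upper bound, the shell estimates bound the indices of nonzero terms from below by $-p$ (enlarging $p$ if needed), so at most $p$ terms each contribute at most $q^{p/n}\|h\|_\infty\le q^{p/n}$, giving $w(x)\le pq^{p/n}$. The one point needing care is $\alpha=\inf_V h>0$: as $h$ is continuous and nonnegative it suffices that $h>0$ on the compact closure $\overline V$, which holds because $V$ has positive Lebesgue density at each of its points, so $h(x)=\int_{B_\delta}\chi_V(x-t)\phi_\delta(t)\,dt>0$ there, and compactness yields a positive infimum. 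I expect the local-finiteness estimate and the behavior at the origin to be the main obstacles; the identities (i) and (ii) and the two-sided bound in (iii) are bookkeeping on top of them.
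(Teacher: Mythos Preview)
The paper does not give its own proof of this proposition: it is quoted verbatim from He--Lau \cite{HL} and stated without argument, so there is nothing in the present paper to compare your proposal against. Your write-up is essentially the standard verification and is correct as it stands. The two ingredients you isolate---that $\operatorname{supp}h$ lies in a fixed Euclidean shell, and that $\|A^{j+1}x\|/\|A^jx\|\ge\lambda_0>1$ under the renormed metric---are exactly what one needs, and the local-finiteness count, the reindexing for (ii), the symmetry for (i), and the two-sided bound in (iii) all go through as you describe.

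Your caveat about smoothness at the origin is well taken and worth keeping: the series is a locally finite sum of smooth functions on $\mathbb{R}^n\setminus\{0\}$, giving $w\in C^\infty(\mathbb{R}^n\setminus\{0\})$, while near $0$ the scaling $w(A^{-k}y)=q^{-k/n}w(y)$ with $y\in V$ shows $w$ is continuous at $0$ but behaves like a (pseudo-)norm and is not smooth there. This is how the result is used in the paper (only continuity of $w$ on compacta is ever invoked, e.g.\ in Lemmas~\ref{t-2-4} and~\ref{t-2-6}), so the imprecision in the statement is harmless. One small tightening: for $\alpha=\inf_{x\in V}h(x)>0$ you need $h>0$ on the closure $\overline{V}$, not just on $V$; since $V$ is an annular region bounded by two smooth hypersurfaces it has Lebesgue density $\ge\tfrac12$ at every boundary point, so your density argument does extend to $\overline{V}$ and compactness gives the positive infimum.
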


 He and Lau \cite{HL} showed that the pseudo norm $w(x)$ is comparable with the Euclidean norm $\|x\|$ through $\lambda_{max}$ and $\lambda_{min}$, the maximal and minimal moduli of the eigenvalues of $A$.  For more details about the properties of $w(x)$ and its relationship with the Euclidean norm, please refer to \cite{HL, CGV, L}.

\vspace{0.3cm}

\begin{proposition}[\cite{HL}]\label{t-2-2}
Let $A\in M_n({\mathbb R})$ be an expanding matrix with $|\det A| = q$ and let $w(x)$ be a pseudo norm associated with $A$. Then for any $0 < \epsilon < \lambda_{min} -1$, there exists $C>0$ (depending on $\epsilon$) such that
\begin{eqnarray*}
C^{-1}\|x\|^{\ln q/(n\ln (\lambda_{max}+\epsilon))} \le w(x) \le C\|x\|^{\ln q/(n\ln (\lambda_{min}-\epsilon))}, \ \ \|x\|>1,
\end{eqnarray*}

\begin{eqnarray*}
C^{-1}\|x\|^{\ln q/(n\ln (\lambda_{min}-\epsilon))} \le w(x) \le C\|x\|^{\ln q/(n\ln (\lambda_{max}+\epsilon))}, \ \ \|x\|\le 1.
\end{eqnarray*}
\end{proposition}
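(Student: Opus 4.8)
The plan is to exploit the quasi-self-similarity of $w$ under $A$, encoded in Proposition~\ref{t-2-1}(ii)--(iii), together with standard spectral-radius estimates for the iterates $A^m$ and $A^{-m}$. The starting observation is the disjoint decomposition ${\mathbb R}^n\setminus\{0\}=\bigsqcup_{k\in{\mathbb Z}}A^kV$ noted after \eqref{e-2-1}, where $V=AB_1\setminus B_1$. Since $AB_1=B_1\sqcup V$, iterating gives $B_1\setminus\{0\}=\bigsqcup_{k\le -1}A^kV$ and ${\mathbb R}^n\setminus B_1=\bigsqcup_{k\ge 0}A^kV$; hence each $x\ne 0$ lies in exactly one $A^kV$, with $k\ge 0$ precisely when $\|x\|>1$ and $k\le -1$ precisely when $0<\|x\|\le 1$. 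This dichotomy is what produces the two regimes in the statement.

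For such an $x$, set $y=A^{-k}x\in V$. By Proposition~\ref{t-2-1}(ii) we have $w(x)=q^{k/n}w(y)$, and by part~(iii) we have $\alpha\le w(y)\le pq^{p/n}$; thus $w(x)$ is comparable to $q^{k/n}$ up to absolute multiplicative constants, so that $\ln w(x)=\tfrac{k}{n}\ln q+O(1)$. The whole problem therefore reduces to bounding the integer $k$ in terms of $\ln\|x\|$.

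To do this I would first record the norm estimates for $y=A^{-k}x\in V$: the conditions $\|y\|>1$ and $\|A^{-1}y\|\le 1$ give $1<\|y\|\le\|A\|$, so $\|y\|$ is trapped in a fixed bounded range. Next I would invoke Gelfand's formula: since the spectral radii of $A$ and $A^{-1}$ are $\lambda_{max}$ and $\lambda_{min}^{-1}$ respectively, for every $0<\epsilon<\lambda_{min}-1$ there is a constant $C$ with $\|A^m\|\le C(\lambda_{max}+\epsilon)^m$ and $\|A^{-m}\|\le C(\lambda_{min}-\epsilon)^{-m}$ for all $m\ge 0$ (each of these sequences is bounded because, after dividing by the stated geometric factor, it tends to $0$). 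Applying these to $x=A^ky$ and to $y=A^{-k}x$, and using $1<\|y\|\le\|A\|$, yields two-sided bounds of the form $C^{-1}(\lambda_{min}-\epsilon)^k\le\|x\|\le C'(\lambda_{max}+\epsilon)^k$ when $k\ge 0$, and the analogous bounds with the roles of $\lambda_{max}$ and $\lambda_{min}$ dictated by the sign of $k$ when $k<0$. Taking logarithms isolates $k$ between two affine functions of $\ln\|x\|$.

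Finally I would substitute these bounds on $k$ into $\ln w(x)=\tfrac{k}{n}\ln q+O(1)$. For $\|x\|>1$ (so $k\ge 0$ and $\ln\|x\|\ge 0$) the upper estimate for $k$ feeds the upper bound for $w$ and produces the exponent $\ln q/(n\ln(\lambda_{min}-\epsilon))$, while the lower estimate for $k$ produces the exponent $\ln q/(n\ln(\lambda_{max}+\epsilon))$; for $0<\|x\|\le 1$ (so $k\le -1$ and $\ln\|x\|\le 0$) the sign reversal interchanges which eigenvalue modulus controls the upper and lower bounds, giving exactly the second pair of inequalities. The only genuinely delicate point is the bookkeeping: keeping track of which direction of the estimate on $k$ pairs with the upper versus lower bound on $w(x)$, and remembering that when $\ln\|x\|\le 0$ multiplying by a negative quantity flips the inequality. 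Everything else is a routine combination of Proposition~\ref{t-2-1} with the spectral-radius bounds.
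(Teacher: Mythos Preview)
Your proposal is correct, and the strategy---write $x=A^ky$ with $y\in V$, use $w(x)=q^{k/n}w(y)$ together with the bounds on $w|_V$ from Proposition~\ref{t-2-1}(iii), and control $k$ via Gelfand-type norm estimates $\|A^m\|\le C(\lambda_{max}+\epsilon)^m$, $\|A^{-m}\|\le C(\lambda_{min}-\epsilon)^{-m}$---is exactly the natural one.

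Note, however, that the present paper does not actually prove Proposition~\ref{t-2-2}: it is stated with the citation \cite{HL} and no proof is given here. So strictly speaking there is no ``paper's own proof'' to compare against. Your argument is essentially the one He and Lau use in \cite{HL}; the only cosmetic difference is that one can phrase the spectral-radius step either via Gelfand's formula (as you do) or by passing to an adapted norm in which $\|A^{-1}\|'<(\lambda_{min}-\epsilon)^{-1}$, but these are equivalent devices. Your bookkeeping remarks about the sign of $k$ and of $\ln\|x\|$ are accurate and account correctly for the swap of exponents between the two regimes.
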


\vspace{0.3cm}

Unlike Euclidean norm, the triangle inequality is not satisfied for pseudo norm any more. However, we have the following inequality  instead.

\begin{lemma}[\cite{HL}]\label{t-2-3}
There exists $\beta > 0$ such that for any $x, y \in {\mathbb R}^n$,
$$w(x + y) \le \beta \max\{w(x), w(y)\}.$$
\end{lemma}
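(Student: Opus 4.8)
The plan is to exploit the homogeneity relation $w(Ax) = q^{1/n}w(x)$ from Proposition \ref{t-2-1}(ii) to reduce the asserted quasi-triangle inequality to a statement about a compact set. The key observation is that both sides of the inequality $w(x+y) \le \beta\max\{w(x), w(y)\}$ scale the same way under the simultaneous substitution $(x, y) \mapsto (Ax, Ay)$: indeed $w(A(x+y)) = q^{1/n}w(x+y)$ while $\max\{w(Ax), w(Ay)\} = q^{1/n}\max\{w(x), w(y)\}$. Hence the quotient $F(x,y) := w(x+y)/\max\{w(x), w(y)\}$ is invariant under the diagonal action of the group $\{A^k : k \in \mathbb{Z}\}$, and it suffices to bound $F$ on a suitable set of representatives.

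First I would fix $x, y$ not both zero and, using the symmetry of $F$, assume $w(y) \le w(x)$, so that $x \neq 0$ and $\max\{w(x),w(y)\} = w(x)$. Since $\mathbb{R}^n \setminus \{0\} = \bigsqcup_{k\in\mathbb{Z}} A^k V$ as a disjoint union, there is a unique $k$ with $A^{-k}x \in V$; replacing $(x,y)$ by $(A^{-k}x, A^{-k}y)$ — which leaves $F$ unchanged — I may assume $x \in V$. Then Proposition \ref{t-2-1}(iii) gives $\alpha \le w(x) \le pq^{p/n}$, and since $w(y) \le w(x)$, both $x$ and $y$ now lie in the sublevel set $\Omega := \{z \in \mathbb{R}^n : w(z) \le pq^{p/n}\}$.

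The next, and main, step is to check that $\Omega$ is compact. It is closed because $w$ is continuous (Proposition \ref{t-2-1}), and it is bounded: this is where the expanding hypothesis genuinely enters, either through the lower bound $w(z) \ge C^{-1}\|z\|^{\ln q/(n\ln(\lambda_{max}+\epsilon))}$ of Proposition \ref{t-2-2} (whose exponent is positive since $q > 1$, forcing $w(z)\to\infty$ as $\|z\|\to\infty$), or by the elementary remark that $z \in A^k V$ implies $w(z) = q^{k/n}w(A^{-k}z) \ge \alpha q^{k/n}$, so that $w(z) \le pq^{p/n}$ bounds $k$ above by some $k_0$ and hence confines $z$ to $\bigcup_{k\le k_0} A^k V \cup \{0\} \subseteq A^{k_0+1}B_1$. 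Granting compactness of $\Omega$, the point $x+y$ ranges over the compact set $\Omega+\Omega$, so $M := \sup\{w(u) : u \in \Omega+\Omega\} < \infty$ by continuity of $w$, and I conclude $w(x+y) \le M \le (M/\alpha)\,w(x) = (M/\alpha)\max\{w(x),w(y)\}$. Taking $\beta := \max\{1, M/\alpha\}$ also covers the trivial cases in which $x$ or $y$ vanishes. The only nonroutine point is the boundedness of $\Omega$; everything else is bookkeeping with the homogeneity of $w$ and the annular decomposition $\mathbb{R}^n\setminus\{0\} = \bigsqcup_{k} A^k V$.
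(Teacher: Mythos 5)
Your proof is correct. The paper states this lemma as a citation from He and Lau and supplies no proof of its own, so there is nothing internal to compare against; your argument --- using the homogeneity $w(Ax)=q^{1/n}w(x)$ to normalize $x$ into the annulus $V$, noting that the sublevel set $\Omega=\{z: w(z)\le pq^{p/n}\}$ is compact because $w(z)\ge \alpha q^{k/n}$ on $A^kV$ forces $k$ to be bounded above, and then bounding $w$ on the compact set $\Omega+\Omega$ by continuity --- is sound at every step and is essentially the standard scaling-plus-compactness proof found in the cited source.
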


\vspace{0.3cm}

Furthermore, we can modify Lemma \ref{t-2-3} into the following lemma, which will be used in  Section 5.

\begin{lemma}\label{t-2-4}
 For any $\epsilon>0$, there is a positive number $\lambda_\epsilon>1$ such that for
 any $x_1, x_2\in \mathbb{R}^n$ with $w(x_2)>\lambda_\epsilon w(x_1)$,
 $w(x_1+x_2)<(1+\epsilon)w(x_2)$ holds.
\end{lemma}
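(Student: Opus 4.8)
The plan is to bootstrap Lemma~\ref{t-2-3} using the scaling property $w(Ax)=q^{1/n}w(x)$ from Proposition~\ref{t-2-1}(ii), which lets me amplify the ``gap'' between $w(x_1)$ and $w(x_2)$ by applying a large negative power of $A$. The key observation is that Lemma~\ref{t-2-3} gives the quasi-triangle inequality $w(x_1+x_2)\le\beta\max\{w(x_1),w(x_2)\}$, and by symmetry (since $w(-x)=w(x)$) it also gives a lower bound: writing $x_2=(x_1+x_2)+(-x_1)$ we get $w(x_2)\le\beta\max\{w(x_1+x_2),w(x_1)\}$, so if $w(x_2)>\beta w(x_1)$ then necessarily $w(x_1+x_2)\ge\beta^{-1}w(x_2)$. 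So the two-sided control $\beta^{-1}w(x_2)\le w(x_1+x_2)\le\beta w(x_2)$ already holds once $w(x_2)$ is bigger than $\beta w(x_1)$; the content of the lemma is to sharpen the constant $\beta$ down to $1+\epsilon$ at the cost of demanding a larger gap $\lambda_\epsilon$.

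First I would fix $\epsilon>0$ and pick an integer $N$ large enough that $\beta^{1/q^{N/n}}$, or more precisely the relevant iterated bound, drops below $1+\epsilon$; the precise mechanism is as follows. Given $x_1,x_2$ with $w(x_2)>\lambda w(x_1)$ for a threshold $\lambda$ to be chosen, I apply Lemma~\ref{t-2-3} repeatedly in a telescoping fashion. The cleanest route is: set $u=x_1+x_2$ and note $u-x_1=x_2$, $u-2x_1=x_2-x_1$, and iterate — but the scaling trick is more robust. Apply $A^{-m}$ to everything: $w(A^{-m}(x_1+x_2))\le\beta\max\{w(A^{-m}x_1),w(A^{-m}x_2)\}=\beta q^{-m/n}\max\{w(x_1),w(x_2)\}=\beta q^{-m/n}w(x_2)$, hence $w(x_1+x_2)\le\beta w(x_2)$ regardless — scaling alone doesn't improve the constant. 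The genuine improvement must instead exploit that when the gap is enormous, the sum of the tail contributions in the defining series~(\ref{e-2-1}) is negligible. So the better plan: use Proposition~\ref{t-2-2} to transfer to the Euclidean norm, where the ordinary triangle inequality gives $\|x_1+x_2\|\le\|x_1\|+\|x_2\|=(1+\|x_1\|/\|x_2\|)\|x_2\|$, and $w(x_2)>\lambda w(x_1)$ forces $\|x_1\|/\|x_2\|$ to be small by the comparability estimates (one must be careful about whether the relevant norms exceed $1$, splitting into cases accordingly and using that $w$ is continuous and $0$ only at $0$). Then feed this back through the other inequality in Proposition~\ref{t-2-2} to get $w(x_1+x_2)\le C\|x_1+x_2\|^{\theta}\le C(1+\text{small})^{\theta}\|x_2\|^{\theta}$ and compare with $w(x_2)\ge C^{-1}\|x_2\|^{\theta'}$; by choosing $\lambda_\epsilon$ large, the ``small'' term is driven to $0$ and the constants $C,C^{-1}$ are absorbed because the exponents can be taken arbitrarily close to the common value $\ln q/(n\ln\lambda)$ as $\epsilon\to 0$ in Proposition~\ref{t-2-2}.

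The main obstacle I anticipate is handling the mismatch of exponents in Proposition~\ref{t-2-2} — the upper and lower bounds for $w$ in terms of $\|\cdot\|$ use $\lambda_{max}$ and $\lambda_{min}$ respectively, so a naive combination does not yield a constant tending to $1$. The cleanest fix is probably to avoid Proposition~\ref{t-2-2} entirely and argue directly from the series~(\ref{e-2-1}): write $x_2$ in the ``annular coordinate'' $A^kV$ where it lives, so $w(x_2)\asymp q^{k/n}$; the hypothesis $w(x_2)>\lambda w(x_1)$ then says $x_1$ lives in annuli with index $\le k-m$ where $q^{m/n}\gtrsim\lambda$, i.e. $\|x_1\|\le\lambda_{max}^{-m+O(1)}\|x_2\|$ roughly, making $x_1$ a vanishingly small Euclidean perturbation of $x_2$; then continuity of $w$ on the compact annulus $\overline{V}$ (uniform continuity, Proposition~\ref{t-2-1}(iii) bounding $w$ on $V$ away from $0$ and $\infty$) gives $w(A^{-k}(x_1+x_2))\le(1+\epsilon)w(A^{-k}x_2)$ directly, and rescaling by $A^k$ — which multiplies both sides by the same factor $q^{k/n}$ — finishes it. I would write the argument this second way, as it keeps all constants honest and makes the role of the gap $\lambda_\epsilon$ transparent.
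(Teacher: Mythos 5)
Your final argument (decompose $x_2=A^{k}y_2$ with $y_2$ in the annulus $V$, use the gap hypothesis plus Proposition~\ref{t-2-1}(iii) to force $A^{-k}x_1$ to be Euclidean-small, invoke uniform continuity of $w$ on a compact neighbourhood of $V$ together with the lower bound $w\ge\alpha$ on $V$, and rescale by $A^{k}$) is exactly the paper's proof. Two cosmetic points only: the contraction rate of $A^{-j}$ is governed by $\lambda_{\min}$, not $\lambda_{\max}$ as you wrote, and the uniform continuity must be taken on a closed neighbourhood of $V$ (the paper's $V_1$) rather than on $\overline{V}$ itself, since the perturbed point need not lie in $V$.
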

\begin{proof}
 Let $V=AB_1\setminus B_1$. Denote $\theta=\max\{\|x\|: x\in V\}$ and $V_1=\bigcup_{x\in V}B(x,1)$.
 Obviously, $w\in C(\overline{V_1})$ since $w\in C^\infty(\mathbb{R}^n)$.
 So, for any $\epsilon>0$, there exists a number
 $\delta$ with $0<\delta<1$ such that $w(z_1)-w(z_2)<\alpha\, \epsilon$
 whenever $z_1, z_2\in V_1$ with $\|z_1-z_2\|\leq \delta$, where $\alpha=\inf_{x\in V}h(x)$
 as introduced in Proposition 2.1. Choose $\lambda_\epsilon>1$ large enough such that
$$\frac{n\,\ln(\lambda_\epsilon\, \alpha/(p\,q^{p/n}))}{\ln q}\geq \frac{-\ln (\delta/\theta)}{\ln\lambda_{\min}},$$
where $p, q$ are the same as in Proposition \ref{t-2-1}.
For any $x_1, x_2\in\mathbb{R}^n$ with $w(x_2)>\lambda_\epsilon w(x_1)$, without loss of generality, assume $x_1\neq 0$ and write $x_1=A^{l_1}y_1$ and $x_2=A^{l_2}y_2$ with
$l_1,l_2\in \mathbb{Z}$ and $y_1,y_2\in V$. It is easy to check that $w(x_i)=q^{{l_i}/{n}}\,w(y_i)$ for $i=1,2$, and hence
$$q^{(l_2-l_1)/{n}}>\lambda_{\epsilon}w(y_1)/w(y_2)\geq \lambda_{\epsilon}\alpha/(p\,q^{p/n}),$$
since $\alpha\leq w(y_i)\leq p\,q^{p/n}$ for $i=1,2$ by Proposition 2.1 (iii). This gives that
\begin{equation*}
l_2-l_1>\frac{n\,\ln(\lambda_{\epsilon}\,\alpha/(p\,q^{p/n}))}{\ln q}
\end{equation*}
and thus
$
l_2-l_1> \frac{-\ln (\delta/\theta)}{\ln\lambda_{\min}}>0.
$
Hence
$$\|A^{l_1-l_2}y_1\|=\|(A^{-1})^{l_2-l_1}y_1\|\leq
\lambda_{\min}^{l_1-l_2}\,\theta<\delta.
$$
So we have
$$w(x_1+x_2)=w(A^{l_2}(A^{l_1-l_2}y_1+y_2))=
q^{l_2/n}\,w(A^{l_1-l_2}y_1+y_2)<q^{l_2/n}\,(w(y_2)+\alpha\,\epsilon)$$
 since $y_1,y_2\in V$ and $\|A^{l_1-l_2}y_1\|<\delta$, and thus
 $$w(x_1+x_2)< (1+\epsilon)\,q^{l_2/n}\, w(y_2) =(1+\epsilon)\,w(x_2).$$
\end{proof}

Next, we come to the definition of pseudo Hausdorff measure and pseudo Hausdorff dimension. For a given set $E\subset {\mathbb R}^n$, the \emph{diameter of $E$ w.r.t. $w(x)$} is defined by
$$\text{diam}_w E = \sup\{w(x-y): x, y\in E\}.$$
 A collection of  sets $\{U_i\}_{i=1}^{\infty}$ in ${\mathbb R}^n$ is called a $\delta$-cover of $E\subset {\mathbb R}^n$ w.r.t. $w(x)$ if $E\subseteq \bigcup\limits_{i=1}^{\infty}\,U_i$ and $\text{diam}_{w} U_i\le\delta$.
 Such a collection is called an open $\delta$-cover of $E$ if $U_i$ is open for all $i\ge 1$.
For  $E\subset\mathbb{R}^n$ and $s\ge 0$, $\delta>0$, define
$$
\mathcal{H}_{w,\delta}^s(E)=\inf\Big\{\sum\limits_{i=1}^{\infty}(\text{diam}_{w}U_i)^s:\,\,\{U_i\}_{i=1}^{\infty} \ \ \text{is a} \ \delta\text{-cover of} \ E \ \text{w.r.t.} \ w(x)\Big\}.
$$
Since $\mathcal{H}_{w,\delta}^s(E)$ is increasing when $\delta$ tends to $0$, we can define the  \emph{$s$-dimensional Hausdorff measure of $E$ w.r.t. $w(x)$} (the \emph{$s$-dimensional pseudo Hausdorff measure of $E$}) by
$$
{\mathcal H}_{w}^{s}(E)=\lim\limits_{\delta\to 0}\mathcal{H}^s_{w,\delta}(E)=\sup\limits_{\delta>0}\mathcal{H}_{w,\delta}^s(E).
$$
It is direct to  see that  $\mathcal{H}_{w}^s$ is a Borel measure on  ${\mathbb R}^n$. By Proposition \ref{t-2-1} (ii), it is easy to obtain that

\begin{eqnarray}\label{e-2-2}
 \mathcal{H}_{w}^s(AE) =  q^{s/n}\mathcal{H}_{w}^s(E).
\end{eqnarray}
As usual, we define the  \emph{Hausdorff dimension of $E$ w.r.t. $w(x)$} ( the \emph{pseudo Hausdorff dimension of $E$}) to be  the quantity
$$\dim _H^{w} E = \inf\{s: \mathcal{H}_{w}^{s}(E) = 0\} = \sup\{s: \mathcal{H}_{w}^{s}(E) = \infty\}.$$

\vspace{0.2cm}

 This setup gives a convenient estimation  of the classical Hausdorff dimension and makes a self-affine set have a structure as a self-similar set since the pseudo norm defined in terms of $A$ absorbs the non-uniform contractivity from $A$.

\vspace{0.2cm}

\begin{theorem}[\cite{HL}]\label{t-2-5}
Let $A\in M_n(\mathbb{R})$ be an expanding matrix with $|\det A| = q \in {\mathbb R}$ and let $w(x)$ be a pseudo norm associated with $A$. Then for any subset $E\subset {\mathbb R}^n$,
\begin{eqnarray*}
\frac{\ln q}{n\ln \lambda_{max}} \dim_H^w E \le  \dim_H E \le \frac{\ln q}{n\ln \lambda_{min}}\dim_H^w E,
\end{eqnarray*}
where $\lambda_{max}, \lambda_{min}$ denote the maximal and minimal moduli of the eigenvalues of $A$, and $\dim_H E$ is the classical Hausdorff dimension of $E$.
\end{theorem}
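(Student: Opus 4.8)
The plan is to obtain Theorem~\ref{t-2-5} as a direct consequence of the two-sided comparison between $w(x)$ and $\|x\|$ in Proposition~\ref{t-2-2}: on sets of sufficiently small diameter the passage between the $w$-diameter and the Euclidean diameter is controlled by fixed power laws, so an economical cover for one of the two Hausdorff measures can be converted term by term into an economical cover for the other, with the size exponent rescaled by the ratio of the two dimensions appearing in the statement.

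First I would fix a threshold below which ``$w$-small'' and ``Euclidean-small'' coincide. Since $A$ is expanding, $q=|\det A|=\prod_i|\lambda_i|>1$ and $\lambda_{min},\lambda_{max}>1$, so all exponents occurring below are positive. From the first line of Proposition~\ref{t-2-2} (for any fixed admissible $\epsilon$), $w(x)\ge C^{-1}$ whenever $\|x\|>1$, while $w$ is continuous and positive on the compact unit sphere; hence $m:=\inf\{w(x):\|x\|\ge 1\}>0$. Consequently, if $\text{diam}_w U<\min\{m,1\}$ then $w(x-y)<m$ for all $x,y\in U$, which forces $\|x-y\|\le 1$, so that $\text{diam}\,U\le 1$ and the ``$\|x\|\le 1$'' estimates of Proposition~\ref{t-2-2} apply to every difference $x-y$ with $x,y\in U$. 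Taking suprema over such pairs gives, for each $\epsilon\in(0,\lambda_{min}-1)$ and with $C=C(\epsilon)$ the corresponding constant,
\[
C^{-1}(\text{diam}\,U)^{a_\epsilon}\le \text{diam}_w U\le C\,(\text{diam}\,U)^{b_\epsilon},\qquad a_\epsilon=\frac{\ln q}{n\ln(\lambda_{min}-\epsilon)},\quad b_\epsilon=\frac{\ln q}{n\ln(\lambda_{max}+\epsilon)},
\]
and the same inequalities hold for any $U$ with $\text{diam}\,U<1$.

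Next I would transfer covers in both directions. For $\dim_H E\le\frac{\ln q}{n\ln\lambda_{min}}\dim_H^w E$: fix $s>\dim_H^w E$, so $\mathcal{H}_w^s(E)=0$ and hence $\mathcal{H}_{w,\delta}^s(E)=0$ for every $\delta>0$; given $\eta>0$ and $\delta\in(0,\min\{m,1\})$, pick a $w$-$\delta$-cover $\{U_i\}$ of $E$ with $\sum_i(\text{diam}_w U_i)^s<\eta$. By the previous step, $\text{diam}\,U_i\le(C\,\text{diam}_w U_i)^{1/a_\epsilon}\le(C\delta)^{1/a_\epsilon}=:\delta'$, and with $s':=s\,a_\epsilon=\frac{s\ln q}{n\ln(\lambda_{min}-\epsilon)}$ one gets $\sum_i(\text{diam}\,U_i)^{s'}\le C^s\sum_i(\text{diam}_w U_i)^s<C^s\eta$, i.e. $\mathcal{H}^{s'}_{\delta'}(E)\le C^s\eta$. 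Letting $\eta\to 0$ with $\delta$ fixed, then $\delta\to 0$ (so $\delta'\to 0$), yields $\mathcal{H}^{s'}(E)=0$, hence $\dim_H E\le s'$; finally let $\epsilon\downarrow 0$ and $s\downarrow\dim_H^w E$. For $\dim_H E\ge\frac{\ln q}{n\ln\lambda_{max}}\dim_H^w E$ I would run the mirror argument: start from $t'>\dim_H E$, so $\mathcal{H}^{t'}(E)=0$; convert a Euclidean $\delta$-cover $\{U_i\}$ with $\sum_i(\text{diam}\,U_i)^{t'}<\eta$ into a $w$-cover using $\text{diam}_w U_i\le C(\text{diam}\,U_i)^{b_\epsilon}\le C\delta^{b_\epsilon}$, take $t:=t'/b_\epsilon=\frac{t'n\ln(\lambda_{max}+\epsilon)}{\ln q}$ to obtain $\mathcal{H}_w^t(E)=0$ and hence $\dim_H^w E\le t$, and then let $\epsilon\downarrow 0$ and $t'\downarrow\dim_H E$.

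I do not expect a genuine conceptual obstacle, since the theorem is essentially a repackaging of Proposition~\ref{t-2-2}. The care needed is purely bookkeeping: checking that all exponents are positive so that ``$\text{diam}_w$ small'' and ``$\text{diam}$ small'' are genuinely interchangeable in the relevant range, staying below the thresholds $m$ and $1$ whenever the power-law comparison is invoked (and noting that the converted covers still have diameters tending to $0$), and ordering the limits $\eta\to 0$, $\delta\to 0$, $\epsilon\downarrow 0$ correctly together with the approximation of $\dim_H^w E$ (respectively $\dim_H E$) from above.
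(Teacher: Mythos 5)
Your proposal is correct. Note that the paper does not prove Theorem~\ref{t-2-5} at all: it is imported verbatim from He and Lau \cite{HL}, with Proposition~\ref{t-2-2} (also imported) as the stated comparison between $w(x)$ and $\lVert x\rVert$. Your argument — establishing a positive threshold $m=\inf\{w(x):\lVert x\rVert\ge 1\}$ so that small $w$-diameter forces small Euclidean diameter, converting $\delta$-covers term by term with the exponent rescaled by $a_\epsilon$ and $b_\epsilon$, and then sending $\epsilon\downarrow 0$ and the test dimension down to $\dim_H^w E$ (resp.\ $\dim_H E$) — is exactly the standard derivation used in \cite{HL}, and the bookkeeping (positivity of the exponents, staying in the $\lVert x\rVert\le 1$ regime, and the order of limits) is handled correctly.
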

It follows immediately that
 $\dim_H^w E = \dim_H E$ when $\lambda_{max} = \lambda_{min}$. This includes the special case that $A$ is a similar matrix.

\vspace{0.3cm}

\section{Proof of Theorem \ref{th1.3}}

In the following, let $A\in M_n(\mathbb{R})$ be expanding with $|\det A| = q$ and $0\in\mathcal{D}\subset\mathbb{R}^n$ be a digit set. Let $K:=K(A,\mathcal{D})$ be a self-affine set associated with $(A,\mathcal{D})$. We always assume that $w(x)$ is a pseudo norm associated with $A$.

\vspace{0.3cm}

 He and Lau \cite{HL} proved the direction ``OSC $\Rightarrow 0<\mathcal{H}_w^s(K)<\infty$" for the self-affine case.

\begin{theorem}[\cite{HL}]\label{t-3-1}
Suppose that the IFS $\{f_d\}_{d\in\mathcal{D}}$ satisfies the OSC. Then
$\dim_H^w K = s:=n\,\ln(\# \mathcal{D})/\ln(q)$ and $0<\mathcal{H}_w^s(K)<\infty$.
\end{theorem}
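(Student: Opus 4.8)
The plan is to prove separately that $\mathcal H_w^s(K)<\infty$ and $\mathcal H_w^s(K)>0$: the first bound also gives $\dim_H^w K\le s$ and the second gives $\dim_H^w K\ge s$, so together they yield all of the assertions. For a word $\mathbf i=(d_0,\dots,d_{M-1})\in\mathcal D^M$ write $f_{\mathbf i}=f_{d_0}\circ\cdots\circ f_{d_{M-1}}$, which is the affine map $x\mapsto A^{-M}(x+c_{\mathbf i})$ for a suitable $c_{\mathbf i}\in\mathbb R^n$, and set $K_{\mathbf i}=f_{\mathbf i}(K)$. Iterating $K=\bigcup_d f_d(K)$ gives $K=\bigcup_{\mathbf i\in\mathcal D^M}K_{\mathbf i}$, and $\text{diam}_w K_{\mathbf i}=q^{-M/n}\,\text{diam}_w K$ by Proposition \ref{t-2-1}(ii). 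For the finiteness, $\{K_{\mathbf i}\}_{\mathbf i\in\mathcal D^M}$ is a $\delta_M$-cover of $K$ with respect to $w$, with $\delta_M=q^{-M/n}\,\text{diam}_w K\to 0$ and $(\#\mathcal D)^M$ elements, so
$$\mathcal H^s_{w,\delta_M}(K)\le(\#\mathcal D)^M\big(q^{-M/n}\,\text{diam}_w K\big)^s=(\text{diam}_w K)^s,$$
using $q^{s/n}=\#\mathcal D$; letting $M\to\infty$ yields $\mathcal H_w^s(K)\le(\text{diam}_w K)^s<\infty$, whence $\dim_H^w K\le s$.

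For the positivity I would apply the mass distribution principle to the self-affine probability measure $\sigma$ on $K$ characterized by $\sigma=(\#\mathcal D)^{-1}\sum_d\sigma\circ f_d^{-1}$, equivalently the push-forward under the coding map $\pi\colon\mathcal D^{\mathbb N}\to K$ of the uniform Bernoulli measure on $\mathcal D^{\mathbb N}$. It is enough to produce a constant $C$ with $\sigma(U)\le C(\text{diam}_w U)^s$ for every set $U$, since then any cover $\{U_i\}$ of $K$ by $w$-small sets satisfies $\sum_i(\text{diam}_w U_i)^s\ge C^{-1}\sum_i\sigma(U_i)\ge C^{-1}\sigma(K)=C^{-1}$, so $\mathcal H_w^s(K)\ge C^{-1}>0$. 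Fix $U$ with $\rho:=\text{diam}_w U<\text{diam}_w K$ (the case $\rho\ge\text{diam}_w K$ being trivial since $\sigma(U)\le 1$) and $U\cap K\ne\emptyset$, and let $M\ge 1$ be minimal with $q^{-M/n}\,\text{diam}_w K\le\rho$; then $q^{-1/n}\rho<\text{diam}_w K_{\mathbf i}\le\rho$ for every $\mathbf i\in\mathcal D^M$, and $(\#\mathcal D)^{-M}=\big(q^{-M/n}\big)^s\le\big(\rho/\text{diam}_w K\big)^s$. Because the length-$M$ cylinders of $\mathcal D^{\mathbb N}$ are pairwise disjoint and $\pi$ carries the $\mathbf i$-cylinder onto $K_{\mathbf i}$, we obtain $\sigma(U)\le(\#\mathcal D)^{-M}N(U)$ with $N(U):=\#\{\mathbf i\in\mathcal D^M:K_{\mathbf i}\cap U\ne\emptyset\}$, and the whole matter reduces to bounding $N(U)$ by a constant independent of $U$ and $M$.

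To bound $N(U)$ I would bring in the OSC. Fix a nonempty bounded open set $V$ with $\bigcup_d f_d(V)\subset V$ and the $f_d(V)$ pairwise disjoint; taking closures gives $\bigcup_d f_d(\overline V)\subset\overline V$, hence $K\subset\overline V$ and $K_{\mathbf i}\subset f_{\mathbf i}(\overline V)=\overline{V_{\mathbf i}}$ where $V_{\mathbf i}:=f_{\mathbf i}(V)$, while iterating the OSC shows the $V_{\mathbf i}$, $\mathbf i\in\mathcal D^M$, are pairwise disjoint. Applying $A^M$, the translates $V+c_{\mathbf i}$ are pairwise disjoint, each of Lebesgue measure $|V|>0$. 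If $K_{\mathbf i}\cap U\ne\emptyset$, pick $x_{\mathbf i}$ in the intersection; then $A^M x_{\mathbf i}=v_{\mathbf i}+c_{\mathbf i}$ for some $v_{\mathbf i}\in\overline V$, so for two such indices $\mathbf i,\mathbf j$, Lemma \ref{t-2-3} (with $w$ even) and Proposition \ref{t-2-1}(ii) give
$$w(c_{\mathbf i}-c_{\mathbf j})=w\big(A^M(x_{\mathbf i}-x_{\mathbf j})-(v_{\mathbf i}-v_{\mathbf j})\big)\le\beta\max\big\{q^{M/n}w(x_{\mathbf i}-x_{\mathbf j}),\ \text{diam}_w V\big\}\le\beta\max\big\{q^{1/n}\,\text{diam}_w K,\ \text{diam}_w V\big\}=:R,$$
a constant (using $q^{M/n}\rho<q^{1/n}\,\text{diam}_w K$ by the minimality of $M$). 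Hence all the $c_{\mathbf i}$ with $K_{\mathbf i}\cap U\ne\emptyset$ lie in a single pseudo-ball $\{c:w(c-c_{\mathbf i_0})\le R\}$, which is bounded in the Euclidean sense by Proposition \ref{t-2-2}; therefore the pairwise disjoint translates $V+c_{\mathbf i}$ all lie inside one translate of the bounded set $\overline V+\{c:w(c)\le R\}$, and comparing Lebesgue measures forces $N(U)\le\big|\overline V+\{c:w(c)\le R\}\big|\big/|V|=:C'$, a constant depending only on $V$. This gives $\sigma(U)\le C'(\#\mathcal D)^{-M}\le C'(\text{diam}_w K)^{-s}\rho^s$, which is the desired estimate.

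The crux — essentially the only place the argument departs from the classical self-similar OSC proof (see \cite{S}) — is the geometric count bounding $N(U)$: since the pseudo norm satisfies no triangle inequality, the bounded-overlap step must be routed through the quasi-triangle inequality of Lemma \ref{t-2-3} together with the two-sided comparison between $w$ and the Euclidean norm from Proposition \ref{t-2-2}, with the actual packing estimate carried out in Euclidean Lebesgue measure on the disjoint affine copies $V+c_{\mathbf i}$ of the open set. By contrast, the reduction to a single generation $M$ is easier here than in the general self-similar case, because every map $f_d$ contracts the pseudo norm by the same factor $q^{-1/n}$.
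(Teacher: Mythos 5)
Your argument is correct, but note that the paper itself offers no proof of this statement: Theorem \ref{t-3-1} is quoted verbatim from He and Lau \cite{HL} and used as a black box, so there is no internal proof to compare against. What you have written is a sound self-contained derivation along the classical Hutchinson--Moran--Schief lines, transplanted to the pseudo norm: the upper bound $\mathcal H_w^s(K)\le(\mathrm{diam}_w K)^s$ via the generation-$M$ Moran cover (using $q^{s/n}=\#\mathcal D$ and the exact scaling $\mathrm{diam}_w K_{\mathbf i}=q^{-M/n}\mathrm{diam}_w K$ from Proposition \ref{t-2-1}(ii)), and the lower bound via the mass distribution principle applied to $\sigma$, with the bounded-overlap count $N(U)\le C'$ obtained by pushing the disjoint open pieces $V_{\mathbf i}$ forward under $A^M$ and packing the translates $V+c_{\mathbf i}$ into a fixed bounded set. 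You correctly identify and handle the two points where the pseudo norm forces a departure from the self-similar argument of \cite{S}: the failure of the triangle inequality is absorbed by the quasi-triangle inequality of Lemma \ref{t-2-3}, and the Euclidean boundedness of the pseudo ball $\{c: w(c)\le R\}$ needed for the Lebesgue-measure packing step is supplied by Proposition \ref{t-2-2}. The only cosmetic caveats are degenerate ones (covers containing singletons or sets disjoint from $K$, and the trivial case $\#\mathcal D=1$), all of which your estimate handles with no extra work; the argument stands as a valid replacement for the citation.
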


In particular, if $A$ is a  similar matrix with scaling factor $\rho>1$, then
$s: = \ln(\# \mathcal{D})/\ln(\rho)$  is the similarity dimension of the self-similar set $K(A, {\mathcal D})$. For consistency, we call $s: = n\,\ln(\# \mathcal{D})/\ln(q)$  the \emph{pseudo similarity dimension} of the self-affine set $K(A, {\mathcal D})$.

\vspace{0.3cm}

To prove the other direction ``$0<\mathcal{H}_w^s(K)<\infty \Rightarrow$ OSC", Lemma \ref{t-3-2}  and Lemma \ref{t-2-6} below are needed. It is well-known (\cite{H}) that the IFS $\{f_d\}_{d\in\mathcal{D}}$ determines a unique Borel probability measure $\sigma$ supported on the set $K(A,\mathcal{D})$ satisfying
\begin{eqnarray}\label{e-3-1}
\int f \ d\sigma=\frac{1}{\#\mathcal{D}}\sum\limits_{d\in\mathcal{D}}\int f\circ f_d \ d\sigma,
\end{eqnarray}
for any compactly supported continuous function $f$ on $\mathbb{R}^n$. We say that $\sigma$ has \emph{no overlap} if $\sigma(f_d(K)\cap f_{d^{'}}(K) )= 0$ for $d \ne d^{'}\in {\mathcal D}$. Lemma \ref{t-3-2} and its proof show that if  the self-affine set $K$ has positive pseudo Hausdorff measure associated with the dimension $s: = n\,\ln(\# \mathcal{D})/\ln(q)$, then the invariant measure $\sigma$  has no overlap.

\vspace{0.3cm}

\begin{lemma}\label{t-3-2}
Suppose that $0<\mathcal{H}_w^s(K)<\infty$ with $s: = n\,\ln(\# \mathcal{D})/\ln(q)$ and $\sigma$ is a self-affine measure defined in (\ref{e-3-1}). Then
$$\sigma = (\mathcal{H}_w^s(K))^{-1}\mathcal{H}_w^s \restriction K,$$ (i.e. $\sigma$ is the restriction of $\mathcal{H}_w^s$ to $K$ normalized so as to give $\sigma (K) = 1$).
\end{lemma}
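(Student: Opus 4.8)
The plan is to show that the normalized restriction $\nu := (\mathcal{H}_w^s(K))^{-1}\,\mathcal{H}_w^s\restriction K$ is a Borel probability measure satisfying the invariance identity \eqref{e-3-1}, and then invoke the uniqueness of the invariant measure $\sigma$ to conclude $\sigma=\nu$. First I would record the basic scaling behaviour of the pieces: since $f_d(x)=A^{-1}(x+d)$, the map $f_d$ is, up to translation, the linear map $A^{-1}$, and translations preserve $\mathcal{H}_w^s$ because $w$ is translation-invariant in the sense that $\mathrm{diam}_w$ of a set depends only on differences of points. Combining this with \eqref{e-2-2}, which gives $\mathcal{H}_w^s(AE)=q^{s/n}\mathcal{H}_w^s(E)$, one obtains $\mathcal{H}_w^s(f_d(E))=q^{-s/n}\mathcal{H}_w^s(E)$ for every Borel set $E$; and by the choice $s=n\ln(\#\mathcal{D})/\ln q$ we have $q^{-s/n}=1/\#\mathcal{D}$. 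Hence each $f_d$ contracts $s$-dimensional pseudo Hausdorff measure by exactly the factor $1/\#\mathcal{D}$.

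The crux is the no-overlap statement: $\mathcal{H}_w^s(f_d(K)\cap f_{d'}(K))=0$ for $d\neq d'$. I would prove this by the standard measure-counting argument. Write $K=\bigcup_{d\in\mathcal{D}}f_d(K)$, so by subadditivity and the contraction factor just computed,
$$
\mathcal{H}_w^s(K)\le\sum_{d\in\mathcal{D}}\mathcal{H}_w^s(f_d(K))=\#\mathcal{D}\cdot\frac{1}{\#\mathcal{D}}\,\mathcal{H}_w^s(K)=\mathcal{H}_w^s(K).
$$
Since $0<\mathcal{H}_w^s(K)<\infty$, equality holds throughout, which forces the overlaps $f_d(K)\cap f_{d'}(K)$ to be $\mathcal{H}_w^s$-null for all distinct pairs $d,d'$ (the union of finitely many sets each of finite measure has total measure equal to the sum of the measures iff the pairwise intersections are null). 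Iterating, for any word $\mathbf{i}=(i_0,\dots,i_{M-1})\in\mathcal{D}^M$ one gets $\mathcal{H}_w^s(f_{\mathbf{i}}(K))=(\#\mathcal{D})^{-M}\mathcal{H}_w^s(K)$ with the $M$-th level cylinders $f_{\mathbf{i}}(K)$ overlapping only on $\mathcal{H}_w^s$-null sets; here $f_{\mathbf{i}}=f_{i_0}\circ\cdots\circ f_{i_{M-1}}$.

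With no-overlap in hand, I would verify the invariance identity for $\nu$. For a compactly supported continuous $f$, the right-hand side of \eqref{e-3-1} with $\sigma$ replaced by $\nu$ is
$$
\frac{1}{\#\mathcal{D}}\sum_{d\in\mathcal{D}}\int f\circ f_d\,d\nu
=\frac{1}{\#\mathcal{D}}\sum_{d\in\mathcal{D}}\frac{1}{\mathcal{H}_w^s(K)}\int_K f\circ f_d\,d\mathcal{H}_w^s.
$$
By the change-of-variables/scaling property $\int_K g\circ f_d\,d\mathcal{H}_w^s = q^{s/n}\int_{f_d(K)} g\,d\mathcal{H}_w^s = \#\mathcal{D}\int_{f_d(K)} g\,d\mathcal{H}_w^s$ (valid since $f_d$ is affine and pushes $\mathcal{H}_w^s$ forward up to the constant factor computed above), the expression becomes
$$
\frac{1}{\mathcal{H}_w^s(K)}\sum_{d\in\mathcal{D}}\int_{f_d(K)} f\,d\mathcal{H}_w^s
=\frac{1}{\mathcal{H}_w^s(K)}\int_K f\,d\mathcal{H}_w^s=\int f\,d\nu,
$$
where the middle equality uses that the $f_d(K)$ cover $K$ and meet only in $\mathcal{H}_w^s$-null sets. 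Thus $\nu$ satisfies \eqref{e-3-1}; since $\nu$ is clearly a Borel probability measure on $K$ and $\sigma$ is the unique such measure (by \cite{H}), we conclude $\sigma=\nu=(\mathcal{H}_w^s(K))^{-1}\mathcal{H}_w^s\restriction K$.

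The main obstacle is making the scaling identity $\mathcal{H}_w^s(f_d(E))=\frac{1}{\#\mathcal{D}}\mathcal{H}_w^s(E)$ and the attendant change-of-variables formula fully rigorous at the level of the measure (not just of the set $K$), i.e.\ checking that $f_d$ transports the Borel measure $\mathcal{H}_w^s$ to a constant multiple of itself on all Borel sets; this is where translation-invariance of $w$ and property (ii) of Proposition \ref{t-2-1} must be used carefully, together with the definition of $\mathcal{H}_w^s$ via $\delta$-covers. Everything else is the routine "equality in subadditivity forces null overlaps" bookkeeping.
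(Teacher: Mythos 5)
Your proposal is correct and follows essentially the same route as the paper: the scaling identity $\mathcal{H}_w^s(f_d(E))=\frac{1}{\#\mathcal{D}}\mathcal{H}_w^s(E)$, the "equality in subadditivity forces null overlaps" step, verification that the normalized restriction is invariant, and the appeal to uniqueness of $\sigma$. The only cosmetic difference is that you check invariance by integrating continuous functions while the paper checks it directly on Borel sets via $\mathcal{H}_w^s(E\cap K)=\sum_{d}\frac{1}{\#\mathcal{D}}\mathcal{H}_w^s(f_d^{-1}(E)\cap K)$; these are equivalent.
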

\begin{proof}
For any Borel subset $E\subset {\mathbb R}^n$ and $d\in {\mathcal D}$, we have
\begin{eqnarray*}
\mathcal{H}_w^s(f_d^{-1}(E)) = \mathcal{H}_w^s(AE - d) = \mathcal{H}_w^s (AE) = q^{s/n}\mathcal{H}_w^s (E) = (\#{\mathcal D})\,\mathcal{H}_w^s(E).
\end{eqnarray*}
Similarly, $\mathcal{H}_w^s(f_d(E)) = \frac{1}{\#{\mathcal D}}\mathcal{H}_w^s(E)$. Then, we have
\begin{eqnarray*}
\mathcal{H}_w^s(K)
& = &\mathcal{H}_w^s(\bigcup\limits_{d\in {\mathcal D}}f_d(K))
\le \sum\limits_{d\in {\mathcal D}}\mathcal{H}_w^s(f_d(K))\\
& = &\#{\mathcal D} \cdot\frac{1}{\#{\mathcal D}} \mathcal{H}_w^s(K) = \mathcal{H}_w^s(K).
\end{eqnarray*}
This implies that $\mathcal{H}_w^s(f_d(K)\cap f_{d^{'}}(K) = 0$ for $d \ne d^{'}\in {\mathcal D}$ since $0 < \mathcal{H}_w^s(K) <\infty$. Then for any Borel set $E$,
\begin{eqnarray*}
\mathcal{H}_w^s(E\cap K) = \sum\limits_{d\in {\mathcal D}}\mathcal{H}_w^s(E\cap f_d(K))
= \sum\limits_{d\in {\mathcal D}} \frac{1}{\#{\mathcal D}} \mathcal{H}_w^s(f_d^{-1}(E)\cap K).
\end{eqnarray*}
This proves that  $\mathcal{H}_w^s \restriction K$  is invariant for the IFS $\{f_d\}_{d\in\mathcal{D}}$
and thus the probablility measure $(\mathcal{H}_w^s(K))^{-1}\mathcal{H}_w^s \restriction K$ coincides
with $\sigma$ as this last measure is unique.
\end{proof}

\vspace{0.3cm}

For $E, F\subset {\mathbb R}^n$ and $z\in{\mathbb R}^n$, we let
\begin{eqnarray*}
D(E, F)= \inf\{d(x,y): x\in E, y\in F\}\quad \text{and}\quad D(z,E)=D(\{z\},E),
\end{eqnarray*}
where $d$ denotes the distance induced by the Euclidean norm.
The \textit{Hausdorff distance} between  compact sets $E, F\subset {\mathbb R}^n$ is denoted by $D_{\mathcal{H}}(E,F)$ and defined by
\begin{eqnarray*}
D_{\mathcal{H}}(E, F)= \max\{\sup\limits_{x\in E}D(x,F), \sup\limits_{y\in F}D(E,y)\}.
\end{eqnarray*}
Denote $\text{Comp}({\mathbb R}^n)$ the set of compact subsets in $\mathbb{R}^n$. Then Blaschke selection Theorem \cite{BP}  implies that

\begin{theorem}[\cite{BP}]\label{t-3-3}
$(\text{Comp}({\mathbb R}^n), D_{\mathcal{H}})$ is a compact metric space.
\end{theorem}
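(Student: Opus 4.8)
The plan is to establish compactness by verifying the two properties that, for a metric space, are equivalent to it: completeness and total boundedness. Before that I would record that $D_{\mathcal{H}}$ is a genuine metric on $\text{Comp}(\mathbb{R}^n)$. It is finite because every compact set is bounded, so both one-sided suprema are finite; it is symmetric by construction; it separates points because $D_{\mathcal{H}}(E,F)=0$ together with the closedness of $E$ and $F$ forces $E=F$; and the triangle inequality follows most cleanly from the reformulation $D_{\mathcal{H}}(E,F)=\inf\{\epsilon>0:\ E\subseteq [F]_\epsilon\ \text{and}\ F\subseteq [E]_\epsilon\}$, where $[G]_\epsilon=\{x:D(x,G)\le\epsilon\}$ is the closed $\epsilon$-neighborhood, using the nesting $[\,[G]_\epsilon]_{\epsilon'}\subseteq [G]_{\epsilon+\epsilon'}$.

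For completeness I would exhibit the limit of a Cauchy sequence $(E_k)$ explicitly. The candidate is
$$E=\bigcap_{m\ge 1}\overline{\bigcup_{k\ge m}E_k},$$
equivalently the set of all $x\in\mathbb{R}^n$ arising as a limit of points $x_{k_j}\in E_{k_j}$ along some increasing sequence $k_j$. The steps are: pass to a subsequence with $D_{\mathcal{H}}(E_k,E_{k+1})\le 2^{-k}$; show that $E$ is closed and, since the Cauchy condition traps all but finitely many $E_k$ in a common bounded set, that $E$ is bounded, hence compact; show that $E\ne\emptyset$ by building a Cauchy sequence $x_k\in E_k$ with $\|x_k-x_{k+1}\|\le 2^{-k}$ whose limit lies in $E$; and finally verify $D_{\mathcal{H}}(E_k,E)\to 0$ by estimating the two inclusions $E_k\subseteq [E]_{\epsilon}$ and $E\subseteq [E_k]_{\epsilon}$ separately, each fed by the summable tail of the distances. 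This half of the argument goes through on all of $\mathbb{R}^n$.

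The remaining and genuinely delicate half is total boundedness, and this is the step I expect to be the main obstacle. The natural argument fixes $\epsilon>0$, places a finite $\epsilon$-net $\{p_1,\dots,p_N\}$ in the ambient region, and approximates each compact set $E$ by the union of those $p_i$ lying within $\epsilon$ of $E$; as there are only finitely many such unions, this furnishes a finite $\epsilon$-net in $D_{\mathcal{H}}$. The catch is that such a finite net exists only after the sets are confined to a fixed bounded region: $\text{Comp}(\mathbb{R}^n)$ is not itself totally bounded, since the translated singletons $\{k e_1\}$, $k\in\mathbb{N}$, already admit no $D_{\mathcal{H}}$-convergent subsequence. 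Thus the operative content is the Blaschke selection principle, that every uniformly bounded sequence in $\text{Comp}(\mathbb{R}^n)$ has a convergent subsequence, and the net construction must be carried out relative to such a uniform bound. In the applications in this paper that bound is automatic: every compact set to which the result is applied is contained in the fixed attractor $K$ (or in a fixed bounded invariant neighborhood of it), so one is effectively working in $\text{Comp}(X)$ for a fixed compact $X\subset\mathbb{R}^n$, where the net argument applies verbatim and, combined with completeness, yields compactness.
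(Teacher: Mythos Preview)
The paper does not supply a proof of this statement; it is simply quoted from \cite{BP} as the Blaschke selection theorem, so there is no argument in the paper to compare your proposal against.

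That said, your analysis is more careful than the paper's formulation. You have correctly spotted that the statement, as literally written, is false: $(\text{Comp}(\mathbb{R}^n),D_{\mathcal{H}})$ is complete but not compact, exactly because of the unbounded sequence of singletons you exhibit. The true Blaschke selection theorem asserts compactness of $(\text{Comp}(X),D_{\mathcal{H}})$ for a fixed compact $X$, or equivalently that every uniformly bounded sequence of nonempty compact sets has a $D_{\mathcal{H}}$-convergent subsequence. Your sketch of completeness plus total boundedness relative to a fixed bounded ambient set is the standard route and is correct.

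Your reading of how the paper actually uses the result is also right: in the proof of Claim~2 inside Theorem~\ref{t-3-4}, the sets $f_{\mathbf{k}}^{-1}(K_{\mathbf{i}})$ are all shown to lie in the fixed compact set $B$, and the conclusion drawn is that only finitely many of them can be $\eta$-separated in $D_{\mathcal{H}}$ for a fixed $\eta>0$. This is precisely total boundedness (equivalently, compactness) of $\text{Comp}(B)$, so the application is sound even though the stated theorem is not.
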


\vspace{0.3cm}

We use the pseudo norm to replace the Euclidean norm and let
\begin{eqnarray*}
D_w(E, F)= \inf\{d_w(x,y):=w(x-y): x\in E, y\in F\}.
\end{eqnarray*}
Define the \textit{Hausdorff distance w.r.t. $w(x)$} between  compact sets $E$ and $F$ in ${\mathbb R}^n$ by
 $$
 D_{{\mathcal H},w} (E, F) = \max\{\sup\limits_{x\in E}D_w(x,F), \sup\limits_{y\in F}D_w(E,y)\}.
 $$
   Denote
  $
  U_w(x,\epsilon): =\{y\in{\mathbb R}^n: d_w(x,y) < \epsilon\}
  $
 to be the open $\epsilon$-neighborhood of $x\in {\mathbb R}^n$ w.r.t. $w(x)$ and
   $U_w(F,\epsilon) = \bigcup \{U_w(x,\epsilon): x\in F\}$.
   Let $f_1, f_2, \dotsc, f_N$ be the IFS associated with the expansive matrix
   $A\in M_n({\mathbb R})$ and the digit set ${\mathcal D} =
   \{d_1, d_2, \dotsc, d_N\}\subset{\mathbb R}^n$. Let $\Sigma = \{1, 2, \dotsc, N\}$
   and $\Sigma ^m = \{(i_1 i_2 \dotsc i_m): 1\le i_j \le N\}$  for $m\geq 1$.
   Write $\Sigma^*=\bigcup_{m\geq 0}\Sigma^m$ with $\Sigma^0:=\emptyset$.
   For $\mathbf{i} = (i_1 i_2 \dotsc i_m)$ and $\mathbf {j} = (j_1 j_2 \dotsc j_k)$
   in $\Sigma^*$, we use the notation $\mathbf {ij}$ for the element
   $(i_1 i_2 \dotsc i_m j_1 j_2 \dotsc j_k)\in\Sigma^* $,
   and say that $\mathbf{i}$ and $\mathbf{j}$ are incomparable if there exists no
   $\mathbf{k}$ such that $\mathbf{i} = \mathbf{jk}$ or $\mathbf{j} = \mathbf{ik}$.
   It follows from Proposition \ref{t-2-1} (ii) that for any $i\in\Sigma$
 \begin{eqnarray}\label{e-3-2}
 w(f_i(x)- f_i(y)) = q^{-\frac{1}{n}}w(x - y).
  \end{eqnarray}
  Let $r = q^{-\frac{1}{n}}$. For
  $\mathbf{i} \in\Sigma^m$, $m\geq 1$,
  the length of $\mathbf{i}$ is denoted by $|\mathbf{i}| = m$.
  Define
$$
f_{\mathbf{i}} = f_{i_1} \circ f_{i_2} \dotsm \circ f_{i_m}, \ \ K_{\mathbf{i}} = f_{\mathbf{i}}(K) \ \  \text{and} \ \ r_{\mathbf{i}} = r^{|\mathbf{i}|} = q^{-\frac{m}{n}}.
$$
It is obvious that, for any $m\ge 1$,
$K=\bigcup_{\textbf{i}\in\Sigma^m}K_{\mathbf{i}}$.

\vspace{0.3cm}

 According to Lemma \ref{t-3-2}, it is direct to get the following result.

\

\begin{corollary}\label{c-3-0}
Suppose that $0<\mathcal{H}_w^s(K)<\infty$ with $s: = n\,\ln(\# \mathcal{D})/\ln(q)$. Then $\mathbf{i}, \mathbf{j}\in\Sigma^*$ are incomparable if and only if ${\mathcal H}_w^s(K_{\mathbf{i}}\cap K_{\mathbf{j}}) = \emptyset$.
\end{corollary}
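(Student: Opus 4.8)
The plan is to deduce the corollary from the single overlap fact established inside the proof of Lemma \ref{t-3-2}, namely that $\mathcal{H}_w^s\big(f_d(K)\cap f_{d'}(K)\big)=0$ for all $d\ne d'$ in $\mathcal{D}$, together with the scaling identity $\mathcal{H}_w^s\big(f_{\mathbf{k}}(E)\big)=(\#\mathcal{D})^{-|\mathbf{k}|}\,\mathcal{H}_w^s(E)$, valid for every Borel set $E\subseteq\mathbb{R}^n$ and every word $\mathbf{k}\in\Sigma^*$. The latter is obtained by iterating the relation $\mathcal{H}_w^s\big(f_i(E)\big)=\tfrac{1}{\#\mathcal{D}}\,\mathcal{H}_w^s(E)$ (also recorded in the proof of Lemma \ref{t-3-2}), which itself follows from (\ref{e-2-2}) and the translation invariance of $\mathcal{H}_w^s$ once one observes that $q^{s/n}=\#\mathcal{D}$ by the choice $s=n\ln(\#\mathcal{D})/\ln q$; each $K_{\mathbf{i}}$ is compact, hence Borel, so these identities apply to the sets at hand. (Throughout, the condition in the statement is read as $\mathcal{H}_w^s(K_{\mathbf{i}}\cap K_{\mathbf{j}})=0$.)

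For the direction ``incomparable $\Rightarrow$ zero measure'', let $\mathbf{k}$ be the longest common prefix of $\mathbf{i}$ and $\mathbf{j}$, and write $\mathbf{i}=\mathbf{k}\mathbf{i}'$, $\mathbf{j}=\mathbf{k}\mathbf{j}'$; since $\mathbf{i}$ and $\mathbf{j}$ are incomparable, $\mathbf{i}'$ and $\mathbf{j}'$ are non-empty and begin with distinct symbols $i'\ne j'$ in $\Sigma$. Because $f_{\mathbf{i}'}(K)\subseteq f_{i'}(K)$ and $f_{\mathbf{j}'}(K)\subseteq f_{j'}(K)$, we have $K_{\mathbf{i}}\subseteq f_{\mathbf{k}}\big(f_{i'}(K)\big)$ and $K_{\mathbf{j}}\subseteq f_{\mathbf{k}}\big(f_{j'}(K)\big)$, so $K_{\mathbf{i}}\cap K_{\mathbf{j}}\subseteq f_{\mathbf{k}}\big(f_{i'}(K)\cap f_{j'}(K)\big)$. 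Monotonicity of $\mathcal{H}_w^s$ and the scaling identity then give $\mathcal{H}_w^s(K_{\mathbf{i}}\cap K_{\mathbf{j}})\le(\#\mathcal{D})^{-|\mathbf{k}|}\,\mathcal{H}_w^s\big(f_{i'}(K)\cap f_{j'}(K)\big)=0$ by the overlap fact.

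For the converse I argue the contrapositive. If $\mathbf{i}$ and $\mathbf{j}$ are comparable, say $\mathbf{j}=\mathbf{i}\mathbf{k}$ with $\mathbf{k}\in\Sigma^*$ (the case $\mathbf{i}=\mathbf{j}\mathbf{k}$ being symmetric), then $K_{\mathbf{j}}=f_{\mathbf{i}}\big(f_{\mathbf{k}}(K)\big)\subseteq f_{\mathbf{i}}(K)=K_{\mathbf{i}}$, hence $K_{\mathbf{i}}\cap K_{\mathbf{j}}=K_{\mathbf{j}}$ and $\mathcal{H}_w^s(K_{\mathbf{i}}\cap K_{\mathbf{j}})=(\#\mathcal{D})^{-|\mathbf{j}|}\,\mathcal{H}_w^s(K)>0$, since $\mathcal{H}_w^s(K)>0$ by hypothesis.

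I do not anticipate any real obstacle: the corollary is essentially bookkeeping on top of Lemma \ref{t-3-2}. The only points deserving a line of justification are the reduction of the overlap of two long words to the overlap of the two level-one pieces at the first index where they diverge, and the identity $q^{s/n}=\#\mathcal{D}$ that makes $\mathcal{H}_w^s$ scale by exactly $1/\#\mathcal{D}$ under each $f_d$; both are immediate.
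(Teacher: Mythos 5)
Your proof is correct and follows exactly the route the paper intends: the paper gives no written argument beyond ``it is direct from Lemma \ref{t-3-2}'', and your write-up supplies precisely the missing bookkeeping (the level-one overlap fact $\mathcal{H}_w^s(f_d(K)\cap f_{d'}(K))=0$, the scaling $\mathcal{H}_w^s(f_{\mathbf{k}}(E))=(\#\mathcal{D})^{-|\mathbf{k}|}\mathcal{H}_w^s(E)$, and the longest-common-prefix reduction). You are also right to read the statement's ``$=\emptyset$'' as the obvious typo for ``$=0$''.
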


\vspace{0.3cm}

Also if we admit only open sets in the covers of $E$, then  $\mathcal{H}_{w,\delta}^s(E)$ (also $\mathcal{H}_{w}^s(E)$) does not change.

\vspace{0.3cm}

\begin{lemma}\label{t-2-6}
For  $E\subset\mathbb{R}^n$ and $s\ge 0$, $\delta>0$, define
$$
\widetilde{\mathcal{H}}_{w,\delta}^s(E)=\inf\Big\{\sum\limits_{i=1}^{\infty}(\text{diam}_{w}U_i)^s:\,\, \{U_i\}_{i=1}^{\infty} \ \ \text{is an open} \ \delta-\text{cover of} \ E \ \text{w.r.t.} \ w(x)\Big\}.
$$
Then $\tilde{\mathcal{H}}_{w,\delta}^s(E) = \mathcal{H}_{w,\delta}^s(E)$.
\end{lemma}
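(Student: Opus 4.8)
The plan is to prove the two inequalities $\widetilde{\mathcal H}^s_{w,\delta}(E)\le \mathcal H^s_{w,\delta}(E)$ and $\mathcal H^s_{w,\delta}(E)\le \widetilde{\mathcal H}^s_{w,\delta}(E)$ separately. The second one is immediate: every open $\delta$-cover is in particular a $\delta$-cover, so the infimum defining $\mathcal H^s_{w,\delta}$ is taken over a larger family, hence $\mathcal H^s_{w,\delta}(E)\le\widetilde{\mathcal H}^s_{w,\delta}(E)$. The real content is the first inequality: given an arbitrary $\delta$-cover $\{U_i\}_{i\ge1}$ of $E$ w.r.t. $w$, I need to produce an open $\delta$-cover whose sum of $s$-th powers of $w$-diameters does not exceed $\sum_i(\mathrm{diam}_w U_i)^s$ by more than a prescribed $\varepsilon>0$.

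The natural idea is to fatten each $U_i$ slightly: replace $U_i$ by its open $w$-neighborhood $U_w(U_i,\eta_i)=\{y: d_w(y,x)<\eta_i \text{ for some } x\in U_i\}$ for a suitably small $\eta_i>0$. The point where care is needed — and what I expect to be the main obstacle — is controlling how much the $w$-diameter grows under this fattening, since $w$ is only a pseudo norm (Lemma 2.3 gives $w(x+y)\le\beta\max\{w(x),w(y)\}$, not the triangle inequality). Concretely, if $y\in U_w(U_i,\eta_i)$ and $y'\in U_w(U_i,\eta_i)$, then $y=x+u$, $y'=x'+u'$ with $x,x'\in U_i$, $w(u),w(u')<\eta_i$; writing $y-y'=(x-x')+(u-u')$ and applying Lemma 2.3 twice gives $w(y-y')\le\beta\max\{w(x-x'),\beta\max\{w(u),w(u')\}\}\le\beta\max\{\mathrm{diam}_w U_i,\,\beta\eta_i\}$. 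So the fattened set has $w$-diameter at most $\beta\max\{\mathrm{diam}_w U_i,\beta\eta_i\}$, which in general is comparable to — not arbitrarily close to — $\mathrm{diam}_w U_i$. This means a single uniform fattening does not directly work.

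To get around the loss of the factor $\beta$, I would instead pass through the Euclidean norm, using Proposition 2.2 (or just the elementary comparability of $w$ with $\|\cdot\|$) to transfer the standard Euclidean argument. The standard fact is that for Euclidean diameters the analogous statement holds with no loss: enclosing a set in an open Euclidean neighborhood of radius $\eta$ increases its Euclidean diameter by at most $2\eta$, and $\mathrm{diam}\, U=0$ forces $U$ to be a point, handled separately. So the cleaner route is: first observe that each $U_i$ with $\mathrm{diam}_w U_i\le\delta$ has bounded Euclidean diameter (by Proposition 2.2 applied on the relevant scale), fatten it in the \emph{Euclidean} metric to an open set $U_i'\supset U_i$, and then check that by choosing the Euclidean fattening radius small enough the resulting $w$-diameter $\mathrm{diam}_w U_i'$ can be made $\le \mathrm{diam}_w U_i + 2^{-i}\varepsilon^{1/s}$ (using uniform continuity of $w$ on the bounded set in question, exactly as in the proof of Lemma 2.4). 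We also still need $\mathrm{diam}_w U_i'\le\delta$; since $\widetilde{\mathcal H}^s_{w,\delta}$ and $\mathcal H^s_{w,\delta}$ are defined with a strict-or-nonstrict bound $\le\delta$, a minor point is that one may first shrink to a cover with $\mathrm{diam}_w U_i<\delta$ (or invoke that $\delta$ ranges over all positive reals and the limit $\delta\to0$ is what matters); alternatively scale $\delta$ by $(1+\varepsilon)$ and use monotonicity. Then $\{U_i'\}_{i\ge1}$ is an open $\delta$-cover (after this harmless adjustment), and $\sum_i(\mathrm{diam}_w U_i')^s\le\sum_i(\mathrm{diam}_w U_i + 2^{-i}\varepsilon^{1/s})^s$.

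Finally, letting $\varepsilon\to0$ gives $\widetilde{\mathcal H}^s_{w,\delta}(E)\le\mathcal H^s_{w,\delta}(E)$, completing the proof. The one subtlety worth flagging explicitly in the write-up is the handling of degenerate covering sets (single points, or sets $U_i$ with $\mathrm{diam}_w U_i=0$, which by Proposition 2.1(i) are singletons): each such set is replaced by a small open $w$-ball around the point, contributing at most $2^{-i}\varepsilon^{1/s}$ raised to the $s$, which again vanishes in the limit. The essential ingredients are thus the comparability of $w$ with the Euclidean norm (Proposition 2.2) and the uniform continuity of the $C^\infty$ function $w$ on bounded sets, both already available.
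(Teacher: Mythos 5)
Your proposal is correct and follows essentially the same route as the paper's proof: both establish the nontrivial inequality by fattening each covering set to an open \emph{Euclidean} neighborhood and using the uniform continuity of $w$ on a bounded Euclidean neighborhood of $U_i$ (with Proposition \ref{t-2-2} ensuring boundedness) to make the increase in $\mathrm{diam}_w$ negligible, then absorbing the slight excess over $\delta$ by passing to $\widetilde{\mathcal{H}}^s_{w,(1+\epsilon)\delta}$ and letting $\epsilon\to 0$. The only differences are cosmetic: you use an additive error $2^{-i}\varepsilon^{1/s}$ where the paper uses the multiplicative bound $\mathrm{diam}_w V_i\le(1+\epsilon)\,\mathrm{diam}_w U_i$, and you explicitly treat the degenerate singleton sets, which the paper leaves implicit.
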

\begin{proof}
It is obvious that $\mathcal{H}_{w,\delta}^s(E)\le \widetilde{\mathcal{H}}_{w,\delta}^s(E)$.
For any $\epsilon > 0$, by the definition of $\mathcal{H}_{w,\delta}^s(E)$, there exists a $\delta$-cover $\{U_i\}_{i=1}^{\infty}$ of $E$ w.r.t. $w(x)$ such that
\begin{eqnarray*}
\mathcal{H}_{w,\delta}^s(E) \ge \sum\limits_{i=1}^{\infty}(\text{diam}_{w}U_i)^s - \epsilon.
\end{eqnarray*}
Denote $U(U_i, 1) = \{y\in{\mathbb R}^n: \|y-x\|< 1 \ \text{for some} \ x\in U_i\}$ to be the open $1$-neighborhood of $U_i$.
For the above $\epsilon > 0$, by using $w(x)\in C(\overline{U(U_i, 1)})$, there exists $\delta_i>0$ such that $|w(x)-w(y)|<\text{diam}_w(U_i)\epsilon$ whenever $\|x-y\|\le\delta_i$ and $x,y\in \overline{U(U_i, 1)}$. Take $\delta_i' = \min\{\delta_i, 1\}$ and $V_i = U(U_i, \frac{\delta_i'}{2})$. Then $U_i\subset V_i\subset U(U_i, 1)$ and $V_i$ is open.
For any $z_1, z_2\in V_i$, by the definition of $V_i$, there exist $x_1, x_2\in U_i$ such that $\|x_j - z_j\|\le \frac{\delta_i'}{2}, j=1, 2$. This and $w(x)\in C(\overline{V_i})$ imply that
\begin{eqnarray}\label{e-3-0}
w(z_1 - z_2) \le w(x_1 -x_2) + \text{diam}_w(U_i)\epsilon \le \text{diam}_w(U_i) + \text{diam}_w(U_i)\epsilon < (1+\epsilon)\delta.
\end{eqnarray}
It follows from (\ref{e-3-0}) that $\text{diam}_w(V_i)\le (1+\epsilon)\text{diam}_w(U_i)< (1+\epsilon)\delta$ since $z_1, z_2\in V_i$ are arbitrary. Using the definition of $\widetilde{\mathcal{H}}_{w,\delta}^s$,
\begin{eqnarray*}
\widetilde{\mathcal{H}}_{w,(1+\epsilon)\delta}^s(E)&\le& \sum(\text{diam}_w V_i)^s \le (1+\epsilon)^s(\text{diam}_w U_i)^s \\
&\le& (1+\epsilon)^s(\mathcal{H}_{w,\delta}^s(E) + \epsilon).
\end{eqnarray*}
Letting $\epsilon\rightarrow 0$, one can get $\widetilde{\mathcal{H}}_{w,\delta}^s(E) \le \mathcal{H}_{w,\delta}^s(E)$.
\end{proof}

\vspace{0.3cm}

\begin{theorem}\label{t-3-4}
If $0<\mathcal{H}_w^s(K(A, \mathcal{D}))<\infty$ with $s: = n\,\ln(\# \mathcal{D})/\ln(q)$ , then the IFS $\{f_d\}_{d\in{\mathcal D}}$ satisfies the OSC.
\end{theorem}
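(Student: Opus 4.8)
The plan is to transplant to the pseudo norm $w$ the argument of Schief \cite{S} and Bishop--Peres \cite{BP} for the self-similar case: I will show that the hypothesis forces condition (iii) of Theorem \ref{th1.3} (namely $\#\mathcal D_M=(\#\mathcal D)^M$ for all $M$ and $\mathcal D_\infty$ uniformly discrete), from which the OSC follows — either directly, or via He--Lau's Theorem 4.4 in \cite{HL}. Write $r=q^{-1/n}$, so $r^s=1/\#\mathcal D$ and $\text{diam}_w K_{\mathbf i}=r^{|\mathbf i|}\,\text{diam}_w K$ for every $\mathbf i\in\Sigma^*$. By Lemma \ref{t-3-2} the invariant measure $\sigma$ equals $(\mathcal H_w^s(K))^{-1}\mathcal H_w^s\!\restriction\!K$, so $\sigma(K_{\mathbf i})=(\#\mathcal D)^{-|\mathbf i|}$, and by Corollary \ref{c-3-0} incomparable cylinders meet in $\sigma$-null sets; thus for every $m$ the decomposition $K=\bigcup_{\mathbf i\in\Sigma^m}K_{\mathbf i}$ is ``measure disjoint''. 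The easy half of (iii) is then immediate: if $\#\mathcal D_M<(\#\mathcal D)^M$ for some $M$, then $K$ is a union of $\#\mathcal D_M$ translates of $A^{-M}K$, whence $\mathcal H_w^s(K)\le\#\mathcal D_M(\#\mathcal D)^{-M}\mathcal H_w^s(K)$ and so $\mathcal H_w^s(K)\in\{0,\infty\}$, a contradiction. In particular $\mathbf i\mapsto f_{\mathbf i}(0)$ is injective on each $\Sigma^m$, and for $\mathbf i\ne\mathbf j$ with $|\mathbf i|=|\mathbf j|$ the map $f_{\mathbf i}^{-1}\circ f_{\mathbf j}$ is a translation by a nonzero vector.

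The substantive task is the uniform discreteness of $\mathcal D_\infty$, and the tool is a bounded--overlap estimate near a well chosen point. Since $\mathcal H_w^s(K)<\infty$ and $d_w(x,y):=w(x-y)$ satisfies the quasi-metric inequality $d_w(x,z)\le\beta\max\{d_w(x,y),d_w(y,z)\}$ (Lemma \ref{t-2-3}), the density theorem for Hausdorff measures (applied in this quasi-metric, or in the Euclidean metric and transferred via Proposition \ref{t-2-2}) produces $x_0\in K$, $\rho_0>0$ and $C_0>0$ with $\sigma(B_w(x_0,\rho))\le C_0\rho^s$ for $0<\rho<\rho_0$; moreover $x_0$ may be taken outside the $\sigma$-null set $\bigcup_m\bigcup_{\mathbf i\ne\mathbf j\in\Sigma^m}(K_{\mathbf i}\cap K_{\mathbf j})$, so that $x_0$ lies in exactly one level-$m$ cylinder $K_{\xi|m}$ for each $m$, defining a code $\xi$. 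Using Lemma \ref{t-2-3} again, every level-$m$ cylinder meeting $B_w(x_0,r^m)$ lies inside $B_w(x_0,C_1r^m)$ for a fixed $C_1$; being measure disjoint and each of $\sigma$-mass $(\#\mathcal D)^{-m}\asymp(r^m)^s$, the density bound forces their number to be at most a fixed $D$ for all large $m$.

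Now suppose, for contradiction, that $\mathcal D_\infty$ is not uniformly discrete. Then there are distinct $z_k,z_k'\in\mathcal D_{M_k}$ with $\|z_k-z_k'\|\to0$, so $K$ carries two distinct level-$M_k$ cylinders $K_{\mathbf a_k}=A^{-M_k}(K+z_k)$ and $K_{\mathbf b_k}=A^{-M_k}(K+z_k')$ that are translates of each other by a vector of $w$-length $o(r^{M_k})$, with $\mathcal H_w^s(K_{\mathbf a_k}\cap K_{\mathbf b_k})=0$ by Corollary \ref{c-3-0}. The idea is to amplify this near-coincidence: inserting the block $\mathbf b_k$ in place of $\mathbf a_k$ at many positions along the coding tree below $K_{\mathbf a_k}$ — and, after transporting by an IFS map, below a cylinder neighbouring $x_0$ — one produces, at a suitable scale, many mutually measure-disjoint cylinders in a region of controlled $w$-diameter, in numbers exceeding the bound $D$ of the previous step; this contradiction (using $w(A^jx)=q^{j/n}w(x)$, the comparison of Proposition \ref{t-2-2}, and the Blaschke selection theorem, Theorem \ref{t-3-3}, to keep the configurations bounded) forces $\mathcal D_\infty$ to be uniformly discrete. \emph{Making this amplification precise is the main obstacle}; it is the self-affine analogue of the key step in Schief's proof that $\mathcal H^s(K)>0$ implies the OSC, and the facts that the contraction ratio $r$ is the same for all the $f_d$ (so all cut-sets are $\Sigma^m$) and that all equal-length return maps are pure translations should make the bookkeeping cleaner than in the general self-similar case.

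Finally, granting $\#\mathcal D_M=(\#\mathcal D)^M$ and the uniform discreteness of $\mathcal D_\infty$, one concludes the OSC. One option is to quote He--Lau's Theorem 4.4 \cite{HL}; alternatively one builds the open set directly à la Schief/Bandt--Graf: uniform discreteness, together with the bounded--overlap estimate and a Blaschke limit, yields a word $\mathbf w_0$ and a ball $\widetilde B$ with $\widetilde B\cap K\ne\emptyset$ and $\widetilde B$ disjoint from every cylinder incomparable with $\mathbf w_0$; then $V:=\bigcup_{\mathbf u\in\Sigma^*}f_{\mathbf u}\big(f_{\mathbf w_0}^{-1}(\widetilde B)\big)$ is a nonempty bounded open set with $\bigcup_{d\in\mathcal D}f_d(V)\subseteq V$ (automatic) and $f_d(V)\cap f_{d'}(V)=\emptyset$ for $d\ne d'$ (from the disjointness property of $\widetilde B$), which is the OSC. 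Apart from the amplification step of the third paragraph — where all the work lies — the proof is a routine transcription of the self-similar argument, with the Euclidean norm replaced by $w$ and Proposition \ref{t-2-2} used to move between the two.
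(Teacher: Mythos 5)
Your overall strategy differs from the paper's: the paper proves the implication directly, by transplanting Schief's open-set construction to the pseudo-metric $d_w$ (an absolute separation estimate for incomparable cylinders from a near-optimal open cover in Claim~1, a bounded-overlap count $\gamma=\sup_{\mathbf k}\#I(\mathbf k)<\infty$ via the Blaschke selection theorem in Claim~2, a word $\mathbf k$ realizing the maximum in Claim~3, and finally the explicit open set $U=\bigcup_{\mathbf j\in\Sigma^*}U_w(K_{\mathbf{jk}},\beta^{-1}\varepsilon r_{\mathbf{jk}})$ in Claims~4--5), whereas you route the argument through condition (iii) of Theorem~\ref{th1.3} and He--Lau's Theorem~4.4. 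Your first step is fine: the deduction of $\#\mathcal D_M=(\#\mathcal D)^M$ from $0<\mathcal H_w^s(K)<\infty$ is correct and easy.

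The proposal nevertheless has a genuine gap, and you flag it yourself: the ``amplification'' step that is supposed to yield the uniform discreteness of $\mathcal D_\infty$ is never carried out, and that is exactly where the substance of the theorem lies. Concretely, if $z_k\ne z_k'\in\mathcal D_{M_k}$ with $\|z_k-z_k'\|\to0$, the concatenation device (as in the last part of the proof of Theorem~\ref{t-6-2}) produces $2^k$ measure-disjoint cylinders of level $L_k=S_k+M_k\to\infty$ clustered in a set of $w$-diameter $O(r^{L_k})$ --- but this cluster sits at a location in $K$ dictated by the chosen digits, not at your density point $x_0$, and composing with $f_{\xi|m}$ only moves the cluster into $K_{\xi|m}$, not into $B_w(x_0,Cr^{m+L_k})$, so the pointwise bound $\sigma(B_w(x_0,\rho))\le C_0\rho^s$ does not apply. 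The uniform substitute, Lemma~\ref{t-4-2}, gives $\mathcal H_w^s(K\cap U)<(\text{diam}_w U)^s+\varepsilon$ for all $U$ of small diameter, but the additive $\varepsilon$ is fixed while $(\text{diam}_w U)^s\to 0$, so the inequality $2^k(r^{L_k})^s\le(Cr^{L_k})^s+\varepsilon$ yields no contradiction. What is actually needed is a scale-invariant bounded-overlap estimate valid at every cylinder simultaneously --- precisely Claims~1--2 of the paper's proof --- and once that is in hand the paper finishes directly via the maximality argument of Claims~3--5, never needing uniform discreteness of $\mathcal D_\infty$ at all (that characterization enters Theorem~\ref{th1.3} only through the separate Theorem~\ref{t-3-5}). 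As written, the proposal reduces the theorem to an unproved statement and cannot be accepted as a proof.
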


\begin{proof}
Let $x > 0$. By the definition of ${\mathcal H}_w^s(K)$ and Lemma \ref{t-2-6}, there
exists a sequence of open sets $\{U_i\}_{i\ge 1}$ such that
$$U:=\bigcup\limits_{i=1}^{\infty} U_i \supset K \ \text{and} \
\sum\limits_{i=1}^{\infty} (\text{diam}_w U_i)^s \le (1+x^s)\mathcal{H}_w^s(K).$$
{\bf Claim 1:} Denote $\delta = D_w(K, U^c),$ where $U^c$ denotes the complement of $U$. Then for all incomparable $\mathbf{i}, \mathbf{j}$ with $r_{\mathbf{j}} > xr_{\mathbf{i}}$, we have $D_{{\mathcal H},w}(K_{\mathbf{i}}, K_{\mathbf{j}})\ge \delta r_{\mathbf{i}}$.
 \begin{proof}Suppose that Claim 1 does not hold.
 Then there exist a pair $\mathbf{i}$, $\mathbf{j}$ with
 $r_{\mathbf{j}}>xr_{\mathbf{i}}$ and
 $D_{\mathcal{H},w}(K_{\mathbf{i}}, K_{\mathbf{j}})<\delta r_{\mathbf{i}}$. Since clearly
$
 D_w(K_{\mathbf{i}}, (f_{\mathbf{i}}(U))^c) = \delta r_{\mathbf{i}},
 $
 we get $$K_{\mathbf{j}}\subset U_w(K_{\mathbf{i}},
 \delta r_{\mathbf{i}}) \subset f_{\mathbf{i}}(U).$$ This implies that
\begin{eqnarray*}
\mathcal{H}_w^s(K)\,r_{\mathbf{i}}^s\,(1+x^s) & < & \mathcal{H}_w^s(K)\,(r_{\mathbf{i}}^s + r_{\mathbf{j}}^s)
= \mathcal{H}_w^s(K_{\mathbf{i}}) + \mathcal{H}_w^s(K_{\mathbf{j}})\\
& = & \mathcal{H}_w^s(K_{\mathbf{i}}\cup K_{\mathbf{j}})
\le \sum\limits_{i=1}^{\infty} (\text{diam}_w f_{\mathbf{i}}(U_i))^s \\
& = & \sum\limits_{i=1}^{\infty} r_{\mathbf{i}}^s\, (\text{diam}_w U_i)^s \le \mathcal{H}_w^s(K)\,
r_{\mathbf{i}}^s\,(1+x^s),
\end{eqnarray*}
which is a contradiction. (The second to the last inequality follows from the fact that $K_{\mathbf{i}}\cup K_{\mathbf{j}}\subset f_{\mathbf{i}}(U)$ and the second equality is obtained from  Corollary \ref{c-3-0}).
\end{proof}

For $0<b<1$, we set
$I_b=\{\textbf{i}\in \Sigma^*: r^{|\textbf{i}|}\leq b<r^{|\textbf{i}|-1}\}$.
The elements of $I_b$ are obviously
incomparable and satisfy $K = \bigcup_{\mathbf{i}\in I_b} K_{\mathbf{i}}$.

\vspace{0.2cm}

 Fix $0 < \varepsilon < \min\{\text{diam}_w K, \beta\,\text{diam}_w K,
(\beta\,\text{diam}_w K)^2, \lambda_{\min}-1\}$, where $\beta$ satisfies the inequality in Lemma \ref{t-2-3}
and $\lambda_{\min}$ is the minimal moduli of the eigenvalues of $A$. For $\textbf{k}\in \Sigma^*$, denote $G_{\mathbf{k}} = U_w (K_{\mathbf{k}},\varepsilon r_{\mathbf{k}})$. Note that for any $k\ge 1$, the pair $(A, A^{-k}{\mathcal D})$ can determine a self-affine set $A^{-k}K$ if $K$ is determined by the pair $(A,{\mathcal D})$ and the IFS $\{f_d\}_{d\in{\mathcal D}}$ satisfies the OSC if and only if $\{f_{A^{-k}d}\}_{d\in{\mathcal D}}$ satisfies the OSC. To simplify the notations, WLOG we can assume that $\text{diam}_w K$ is small enough such that $\text{diam}_w G_{\mathbf{k}} < 1$ for any $\textbf{k}\in \Sigma^*$ since we can always use $A^{-k}K$ and $\{f_{A^{-k}d}\}_{d\in{\mathcal D}}$ instead of $K$ and $\{f_d\}_{d\in{\mathcal D}}$ if $\text{diam}_w K$ is not small enough.

\vspace{0.3cm}

\noindent {\bf Claim 2:} Denote $
I(\mathbf{k}) = \{\mathbf{i}\in I_{\text{diam}_w G_{\mathbf{k}}}: K_{\mathbf{i}}\cap G_{\mathbf{k}}\ne \emptyset\},
$
and $\gamma = \sup\limits_\textbf{k} \# I(\mathbf{k})$. Then $\gamma < \infty$.
\begin{proof}
 For the  given $\varepsilon > 0$, let $C_i$ and $\alpha_i$, $i=1, 2$, be the number as in Proposition \ref{t-2-2}
 satisfying the inequality that $\|x - y\| \le (C_i d_w(x,y))^{\alpha_i}$ for $\|x-y\|>1$ and $\|x-y\|\le 1$ respectively. Take $C = C_1$ and $\alpha = \alpha_1$ if $(C_1 \beta^3(\text{diam}_w K)^2)^{\alpha_1} \ge (C_2 \beta^3(\text{diam}_w K)^2)^{\alpha_2}$ and if not, we take $C = C_2$ and $\alpha = \alpha_2$.
 Let $B$ be the closed  $(C \beta^3(\text{diam}_w K)^2)^{\alpha}$-neighborhood of $K$, i.e.
 $B=\{x\in \mathbb{R}^n: D(x,K)\leq (C \beta^3(\text{diam}_w K)^2)^{\alpha}\}$. Then for any $\mathbf{k}\in \Sigma^*$, it holds that
\begin{equation}\label{e-3-3}
f_{\mathbf{k}}^{-1}(K_{\mathbf{i}}) \subset B,  \ \forall \ \mathbf{i}\in I(\mathbf{k}).
\end{equation}
In fact, noticing that $K_{\mathbf{i}}\cap G_{\mathbf{k}}\ne \emptyset$ if $\mathbf{i}\in I(\mathbf{k})$, for any $y\in K_{\mathbf{i}}$, it follows from the definition of $d_w$ and Lemma \ref{t-2-3} that,
$$
D_w(y, K_{\mathbf{k}}) \le \beta \max\{d_w(y,z), D_w(z, K_{\mathbf{k}})\} \le \beta \max\{\text{diam}_w K_{\mathbf{i}}, \varepsilon r_{\mathbf{k}}\},
$$
where $z$ is any point in $K_{\mathbf{i}}\cap G_{\mathbf{k}}$.
This gives that
\begin{eqnarray}\label{e-3-4}
D_w(f_{\mathbf{k}}^{-1}(y), K) \le \beta \max\{r_{\mathbf{k}}^{-1}r_{\mathbf{i}}\text{diam}_w K, \varepsilon \}.
\end{eqnarray}
On the other hand, if  $\mathbf{i}\in I(\mathbf{k})$, then $\mathbf{i}\in I_{\text{diam}_w G_{\mathbf{k}}}$ and thus we have $r_{\mathbf{i}} \leq \text{diam}_w G_{\mathbf{k}}$ by the definition of $I_{\text{diam}_w G_{\mathbf{k}}}$. Next, we will utilize Lemma \ref{t-2-3} to give an estimation on $\text{diam}_w G_{\mathbf{k}}$. Let $z_1, z_2\in G_{\mathbf{k}}$. Then there exist $x_1, x_2\in  K_{\mathbf{k}}$ satisfying that $d_w(z_i,x_i)<\varepsilon r_{\mathbf{k}}$ for $i=1, 2$. By Lemma \ref{t-2-3}, we obtain
\begin{eqnarray*}
d_w(z_1,z_2) &=& w(z_1-x_1+x_1-x_2+x_2-z_2) \\
&\le& \beta\max\{w(z_1-x_1), w(x_1-x_2+x_2-z_2)\}\\
&\le& \beta\max\{w(z_1-x_1), \beta\max\{w(x_1-x_2), w(x_2-z_2)\}\}\\
&\le& \beta\max\{\varepsilon r_{\mathbf{k}}, \beta\max\{r_{\mathbf{k}}\text{diam}_w K, \varepsilon r_{\mathbf{k}}\}\}\\
&\le& \beta^2 r_{\mathbf{k}}\text{diam}_w K.
\end{eqnarray*}
The last inequality is obtained by the restriction of $\varepsilon$. This and $r_{\mathbf{i}} \leq \text{diam}_w G_{\mathbf{k}}$ give $r_{\mathbf{i}} \leq \beta^2 r_{\mathbf{k}}\text{diam}_w K$.
Substituting this into (\ref{e-3-4}), one can get $D_w(f_{\mathbf{k}}^{-1}(y), K) \le \beta^3(\text{diam}_w K)^2$. Then by using Proposition \ref{t-2-2}, we have

\begin{eqnarray*}
D(f_{\mathbf{k}}^{-1}(y), K) &\le&  \begin{cases}(C_1 D_w(f_{\mathbf{k}}^{-1}(y), K))^{\alpha_1}, \ \text{if} \  D(f_{\mathbf{k}}^{-1}(y), K) >1, \\(C_2 D_w(f_{\mathbf{k}}^{-1}(y), K))^{\alpha_2}, \ \text{if} \  D(f_{\mathbf{k}}^{-1}(y), K) \le 1
\end{cases}\\
&\le& (C \beta^3(\text{diam}_w K)^2)^{\alpha},
\end{eqnarray*} which proves (\ref{e-3-3}).

\

\noindent  Since for any $\mathbf{i}, \mathbf{j} \in \Sigma^m$, $r_{\mathbf{i}} = r_{\mathbf{i}} = r^m$. Then $r_{\mathbf{j}} \geq r_{\mathbf{i}} r$ holds. We may apply  Claim 1 for $x = r$ to get $\delta > 0$ such that $$D_{{\mathcal H},w}(K_{\mathbf{i}}, K_{\mathbf{j}}) \ge \delta r_{\mathbf{i}}\ge \delta  r_\textbf{k}r\text{diam}_wG$$ for any distinct $\mathbf{i}, \mathbf{j} \in I(\mathbf{k})$, where $G=U_w (K,\epsilon)$. Hence,
\begin{eqnarray*}
D_{{\mathcal H},w}(f_{\mathbf{k}}^{-1}(K_{\mathbf{i}}),
f_{\mathbf{k}}^{-1}(K_{\mathbf{j}}))\ge \delta\, r\, \text{diam}_w G
\end{eqnarray*}
and
\begin{eqnarray*}
D_{{\mathcal H}}(f_{\mathbf{k}}^{-1}(K_{\mathbf{i}}),
f_{\mathbf{k}}^{-1}(K_{\mathbf{j}}))\ge (C^{\prime}\,\delta \,r \,\text{diam}_w G)^{\alpha^{\prime}},
\end{eqnarray*}
 with some positive $C^{\prime}, \alpha^{\prime}$ for all  $\mathbf{i}, \mathbf{j} \in I(\mathbf{k})$ by Proposition \ref{e-2-2}.
By Theorem \ref{t-3-3},  $\# I(\mathbf{k})$ is bounded by the maximal number of  compact subsets of $B$ which are $(C^{\prime}\delta r\text{diam}_wG)^{\alpha^{\prime}}$-separated in the Hausdorff metric, which is obviously independent of $\textbf{k}\in\Sigma^*$.
\end{proof}

\noindent {\bf Claim 3:} Choose $\mathbf{k}$ such that $\gamma = \# I(\mathbf{k})$. Then for any $\mathbf{j}\in \Sigma^*$,
$
I(\mathbf{jk}) = \{\mathbf{ji}: \mathbf{i}\in I(\mathbf{k})\}.
$
\begin{proof}
By maximality, we only need to prove ``$\supset$". This is clear since $\emptyset \ne K_{\mathbf{i}}\cap G_{\mathbf{k}}$ implies
\begin{eqnarray*}
\emptyset &\ne& f_{\mathbf{j}}(K_{\mathbf{i}}\cap G_{\mathbf{k}}) = f_{\mathbf{j}}(K_{\mathbf{i}})\cap f_{\mathbf{j}}(G_{\mathbf{k}}) = K_{\mathbf{ji}}\cap f_{\mathbf{j}} (U_w(K_{\mathbf{k}},\varepsilon r_{\mathbf{k}}))\\
&=& K_{\mathbf{ji}}\cap U_w(K_{\mathbf{jk}},\varepsilon r_{\mathbf{jk}}) = K_{\mathbf{ji}}\cap G_{\mathbf{jk}}.
\end{eqnarray*}
\end{proof}
\noindent {\bf Claim 4:} $D_w(K_{i\mathbf{ik}}, K_{j})\ge \varepsilon r_{i\mathbf{ik}}$ for any $j \ne i$ and any $\mathbf {i}\in\Sigma^*$.
\begin{proof} For any word $j\textbf{l}$ with $j \ne i$, Claim 3 implies that $j\textbf{l} \notin I(i\mathbf{ik})$.
 By the definition of $I(i\mathbf{ik})$, for $j\textbf{l}\in I_{\text{diam}_w G_{i\mathbf{ik}}}$, $K_{j\textbf{l}}\cap G_{i\mathbf{ik}}=\emptyset$. Hence,
 $D_w(K_{i\mathbf{ik}}, K_{j\textbf{l}})\ge \varepsilon r_{i\mathbf{ik}}$. Noticing that
$$K_{j}\subseteq \bigcup\{K_{j\textbf{l}}: j\textbf{l}\in I_{\text{diam}_w G_{i\mathbf{ik}}}\},$$ then Claim 4 follows.
\end{proof}
\noindent{\bf Claim 5:} For $\textbf{i}\in\Sigma^*$, denote $G_{\mathbf{i}}^* = U_w (K_{\mathbf{i}},\beta^{-1}\varepsilon r_{\mathbf{i}})$.   Then
$U = \bigcup\limits_{\mathbf{j}\in\Sigma^*} G_{\mathbf{jk}}^*$ gives the OSC.
\begin{proof}
Clearly, $U$ is open and $K_{\mathbf{k}}\subset G_{\mathbf{k}}^* \subset U$. For each $i$,
\begin{eqnarray*}
f_i(U) = \bigcup\limits_{\mathbf{j}\in\Sigma^*} f_i(G_{\mathbf{jk}}^*) = \bigcup\limits_{\mathbf{j}\in\Sigma^*} G_{i\mathbf{jk}}^* \subset U.
\end{eqnarray*}
For $i\ne j$, $f_i(U)\cap f_j(U) = \emptyset$. Indeed, if not, there exist $\mathbf{i}, \mathbf{j}$ such that
$G_{i\mathbf{ik}}^*\cap G_{j\mathbf{jk}}^* \ne \emptyset$. Let $y\in G_{i\mathbf{ik}}^*\cap G_{j\mathbf{jk}}^*$. Then there exist $y_1\in K_{i\mathbf{ik}}$ and $y_2\in K_{j\mathbf{jk}}$ such that
$w(y - y_1) < \beta^{-1}\varepsilon r_{i\mathbf{ik}}$ and $w(y - y_2) < \beta^{-1}\varepsilon r_{j\mathbf{jk}}$. Without loss of generality, we assume that $r_{i\mathbf{ik}}\geq r_{j\mathbf{jk}}$. Then we have $w(y_1 - y_2) < \varepsilon r_{i\mathbf{ik}}$. Hence, $D_w(K_{i\mathbf{ik}}, K_j) < \varepsilon r_{i\mathbf{ik}}$, which contradicts Claim 4.
\end{proof}
This completes the proof of Theorem \ref{t-3-4}.
\end{proof}

There is another equivalent condition for the OSC provided by He and Lau in \cite{HL}.

\begin{theorem}[\cite{HL}]\label{t-3-5}
Let $A\in M_n(\mathbb{R})$ be expanding and let ${\mathcal D}\subset {\mathbb R}^n$ be a digit set. Then the IFS $\{f_d\}_{d\in{\mathcal D}}$ satisfies the OSC if and only if  $\# {\mathcal D}_M = (\#{\mathcal D})^M$ and ${\mathcal D}_{\infty}$ is a uniformly discrete set.
\end{theorem}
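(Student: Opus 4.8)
The plan is to prove the two implications separately, and to handle the harder direction by reducing to the equivalence ``OSC $\iff 0<\mathcal{H}_w^s(K)<\infty$'' that is already available from Theorems \ref{t-3-1} and \ref{t-3-4}; thus it will suffice to derive the estimate $0<\mathcal{H}_w^s(K)<\infty$ from the two combinatorial conditions. Throughout, recall the standing assumption $0\in\mathcal{D}$, which gives $\mathcal{D}_M\subset\mathcal{D}_{M+1}$.

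For the direction ``OSC $\Rightarrow\ \#\mathcal{D}_M=(\#\mathcal{D})^M$ and $\mathcal{D}_\infty$ uniformly discrete'', let $V$ be a bounded open set witnessing the OSC. First I would show, by induction on $M$, that the sets $\{f_{\mathbf{i}}(V):\mathbf{i}\in\Sigma^M\}$ are pairwise disjoint and each contained in $V$; the inductive step uses only the injectivity of each $f_d$ together with the two defining properties of $V$. Writing $f_{\mathbf{i}}(x)=A^{-M}x+A^{-M}g_{\mathbf{i}}$, where $g_{\mathbf{i}}$ ranges over $\mathcal{D}_M$ as $\mathbf{i}$ ranges over $\Sigma^M$, two distinct words with $g_{\mathbf{i}}=g_{\mathbf{j}}$ would give $f_{\mathbf{i}}(V)=f_{\mathbf{j}}(V)\ne\emptyset$, contradicting disjointness; hence $\mathbf{i}\mapsto g_{\mathbf{i}}$ is a bijection onto $\mathcal{D}_M$ and $\#\mathcal{D}_M=(\#\mathcal{D})^M$. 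Applying $A^M$ to $f_{\mathbf{i}}(V)\cap f_{\mathbf{j}}(V)=\emptyset$ gives $(V+g_{\mathbf{i}})\cap(V+g_{\mathbf{j}})=\emptyset$, i.e. $g_{\mathbf{i}}-g_{\mathbf{j}}\notin V-V$; since $V-V$ is an open neighbourhood of $0$ it contains a Euclidean ball $B(0,\delta_0)$, so $\|g_{\mathbf{i}}-g_{\mathbf{j}}\|\ge\delta_0$. Because $\mathcal{D}_M\subset\mathcal{D}_{M+1}$, any two distinct points of $\mathcal{D}_\infty$ lie in a common $\mathcal{D}_M$ and are therefore $\delta_0$-separated.

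For the converse, assume $\#\mathcal{D}_M=(\#\mathcal{D})^M$ for all $M$ and that $\mathcal{D}_\infty$ is $\delta_0$-uniformly discrete. The bound $\mathcal{H}_w^s(K)\le(\text{diam}_w K)^s<\infty$ is immediate from the covers $K=\bigcup_{\mathbf{i}\in\Sigma^M}K_{\mathbf{i}}$, since $(\#\mathcal{D})^M r_{\mathbf{i}}^s=1$ when $|\mathbf{i}|=M$. For the lower bound I would invoke the mass distribution principle with the invariant measure $\sigma$ of (\ref{e-3-1}), realized as the image of the uniform Bernoulli measure under the coding map $\pi$. Everything hinges on a scale-uniform multiplicity bound: there is $N<\infty$ so that for every bounded $U$, if $M$ is chosen with $r^M\le\text{diam}_w U<r^{M-1}$ then at most $N$ words $\mathbf{i}\in\Sigma^M$ have $K_{\mathbf{i}}\cap U\ne\emptyset$. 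Granting this, $\pi^{-1}(U)$ is contained in the union of the corresponding cylinders, so $\sigma(U)\le N(\#\mathcal{D})^{-M}=N r^{Ms}\le N(\text{diam}_w U)^s$, and the mass distribution principle yields $\mathcal{H}_w^s(K)\ge N^{-1}\sigma(K)=N^{-1}>0$; Theorem \ref{t-3-4} then gives the OSC.

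The crux is this multiplicity bound, and it is exactly where passing through the pseudo norm is essential. If $K_{\mathbf{i}}$ and $K_{\mathbf{j}}$ (with $|\mathbf{i}|=|\mathbf{j}|=M$) both meet $U$, choosing a point of each in $U$ and comparing them via Lemma \ref{t-2-3} once, together with the scaling identity $w(A^{-M}x)=r^M w(x)$, bounds $w(g_{\mathbf{i}}-g_{\mathbf{j}})$ by a constant depending only on $\text{diam}_w K$, $q$ and $\beta$, and in particular independent of $M$ and $U$. Proposition \ref{t-2-2} then converts this into a Euclidean bound $\|g_{\mathbf{i}}-g_{\mathbf{j}}\|\le C'$; since $\#\mathcal{D}_M=(\#\mathcal{D})^M$ makes $\mathbf{i}\mapsto g_{\mathbf{i}}$ injective with values in the $\delta_0$-separated set $\mathcal{D}_\infty$, at most $N=(1+2C'/\delta_0)^n$ such words can occur. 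I expect the only genuine technical point to be pinning this $N$ down uniformly in the scale; the remaining steps are routine verifications. (Alternatively one could build the OSC open set directly in the spirit of Schief, again using the pseudo norm to control overlaps, but the reduction above is shorter given what has already been proved.)
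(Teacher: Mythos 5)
Your proposal is essentially correct, but note that the paper itself offers no proof of this statement: Theorem \ref{t-3-5} is quoted verbatim from He and Lau (Theorem 4.4 of \cite{HL}) and is used as a black box to assemble Theorem \ref{th1.3}, so there is no in-paper argument to compare against. What you have written is therefore a self-contained substitute, and its logic is sound within the paper's structure. The forward direction (disjointness of the $M$-th level images of the OSC set $V$, injectivity of $\mathbf{i}\mapsto g_{\mathbf{i}}$, and separation via $g_{\mathbf{i}}-g_{\mathbf{j}}\notin V-V\supset B(0,\delta_0)$) is standard and correct. The converse is where your route genuinely differs from simply citing \cite{HL}: you reduce to $0<\mathcal{H}_w^s(K)<\infty$ and invoke the paper's Theorem \ref{t-3-4}, which is legitimate since that theorem is proved independently of Theorem \ref{t-3-5} (no circularity). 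Your key multiplicity bound is right: with $u\in K_{\mathbf{i}}\cap U$, $v\in K_{\mathbf{j}}\cap U$ one has $g_{\mathbf{i}}-g_{\mathbf{j}}=A^M(u-v)-(x-y)$ for some $x,y\in K$, so Lemma \ref{t-2-3} and $w(A^Mz)=q^{M/n}w(z)$ give $w(g_{\mathbf{i}}-g_{\mathbf{j}})\le\beta\max\{q^{1/n},\operatorname{diam}_wK\}$ uniformly in $M$ when $r^M\le\operatorname{diam}_wU<r^{M-1}$, and Proposition \ref{t-2-2} plus $\delta_0$-separation of $\mathcal{D}_\infty$ and the injectivity coming from $\#\mathcal{D}_M=(\#\mathcal{D})^M$ give the packing bound $N=(1+2C'/\delta_0)^n$. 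The only loose ends are genuinely routine: sets of $w$-diameter zero in a cover must be handled (e.g.\ by first noting $\sigma$ is non-atomic, which follows from the same bound applied to small balls), and one should observe that finite $w$-diameter forces Euclidean boundedness via Proposition \ref{t-2-2} so that the choice of $M$ always makes sense for sets in a $\delta$-cover with $\delta<r$. This is a clean and economical proof given what the paper has already established; what it does not reproduce is whatever direct construction He and Lau use, so as written it proves the theorem only for the digit sets with $0\in\mathcal{D}$ assumed throughout this paper (which is all that Theorem \ref{th1.3} needs).
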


Theorem \ref{t-3-4} together with Theorem \ref{t-3-5} and Theorem \ref{t-3-1} imply Theorem \ref{th1.3}.


\section{The upper convex density  w.r.t. $w(x)$}

In this section, we introduce the notion of $s$-sets w.r.t. the pseudo norm $w(x)$, and study  the upper convex density of an $s$-set w.r.t. $w(x)$ at certain points.  These are definitions analogous to those
corresponding to the Euclidean norm. (See, for example, Section 2 in \cite{F}.)

\vspace{0.3cm}

A subset $E\subset\mathbb{R}^n$ is called an \emph{$s$-set} ($0\le s\le n$) w.r.t. $w(x)$ if
$E$ is $\mathcal{H}_w^s$-measurable and $0<\mathcal{H}_w^s(E)<\infty$.
The \emph{upper convex $s$-density} of an $s$-set $E$ w.r.t. $w(x)$ at $x$ is defined as
\begin{eqnarray*}
D_{w,c}^s(E,x) = \lim\limits_{r\to 0}\sup_{0<\text{diam}_w U\le r, x\in U}\frac{\mathcal{H}_w^s(E\cap U)}{(\text{diam}_w U)^s},
\end{eqnarray*}
where the supremum is over all convex sets $U$ with $x\in U$ and $0<\text{diam}_w U\le r$, and the limit exists obviously.
We have the following result similar to Theorem 2.2 and Theorem 2.3 in \cite{F}.

\begin{theorem}\label{t-4-1}
If $E$ is an $s$-set w.r.t. $w(x)$ in $\mathbb{R}^n$, then $D_{w,c}^s(E,x)=1$ at $\mathcal{H}_w^s$-almost all $x\in E$ and $D_{w,c}^s(E,x)=0$ at $\mathcal{H}_w^s$-almost all $x\in E^c$.
\end{theorem}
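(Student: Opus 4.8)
The plan is to adapt the proofs of the classical density theorems (Theorems 2.2 and 2.3 of \cite{F}) to the pseudo-norm setting, working throughout in the quasi-metric space $(\mathbb{R}^n,d_w)$ with $d_w(x,y)=w(x-y)$. Two substitutes replace the Euclidean geometry: the quasi-triangle inequalities of Lemmas \ref{t-2-3} and \ref{t-2-4}, wherever Falconer uses $\|x+y\|\le\|x\|+\|y\|$; and the exact scaling relation $w(Ax)=q^{1/n}w(x)$ of Proposition \ref{t-2-1}(ii) together with the two-sided comparison of $w$ with $\|\cdot\|$ in Proposition \ref{t-2-2}, wherever a Euclidean statement must be transferred to the $w$-metric. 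Since $0<\mathcal{H}_w^s(E)<\infty$, $\mathcal{H}_w^s\restriction E$ is a finite Radon measure on $(\mathbb{R}^n,d_w)$, so the Vitali covering theorem (valid in a quasi-metric space once the constant in the $5r$-covering lemma is allowed to depend on $\beta$ from Lemma \ref{t-2-3}) and inner/outer regularity are at our disposal. As a preliminary one records that $x\mapsto D_{w,c}^s(E,x)$ is $\mathcal{H}_w^s$-measurable; this is routine, since the limit as $r\downarrow0$ may be taken along a rational sequence and the suprema computed over a fixed countable family of convex sets.

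The upper bound $D_{w,c}^s(E,x)\le1$ for $\mathcal{H}_w^s$-a.e.\ $x\in E$ and the vanishing $D_{w,c}^s(E,x)=0$ for $\mathcal{H}_w^s$-a.e.\ $x\in E^c$ both follow from the same Vitali scheme. For the first, fix $\lambda>1$, set $F_\lambda=\{x\in E:D_{w,c}^s(E,x)>\lambda\}$, and by outer regularity pick an open $V\supset F_\lambda$ with $\mathcal{H}_w^s(E\cap V)\le\mathcal{H}_w^s(F_\lambda)+\epsilon$; each $x\in F_\lambda$ lies in arbitrarily small convex sets $U\subset V$ with $\mathcal{H}_w^s(E\cap U)>\lambda(\text{diam}_w U)^s$, and a disjoint Vitali subfamily $\{U_i\}$ of $w$-diameter $<\delta$ covering $\mathcal{H}_w^s$-almost all of $F_\lambda$ gives $\mathcal{H}_{w,\delta}^s(F_\lambda)\le\sum_i(\text{diam}_w U_i)^s<\lambda^{-1}\sum_i\mathcal{H}_w^s(E\cap U_i)\le\lambda^{-1}(\mathcal{H}_w^s(F_\lambda)+\epsilon)$, and letting $\delta\to0$, $\epsilon\to0$, $\lambda\downarrow1$ finishes it. For the second, fix $\lambda>0$, set $F=\{x\notin E:D_{w,c}^s(E,x)>\lambda\}$, and by inner regularity pick a compact $C\subset E$ with $\mathcal{H}_w^s(E\setminus C)<\epsilon$, so that $\mathbb{R}^n\setminus C$ is open and contains $F$; choosing the convex sets that witness the density bound at points of $F$ to lie inside $\mathbb{R}^n\setminus C$, a disjoint Vitali subfamily $\{U_i\}$ of $w$-diameter $<\delta$ covering $\mathcal{H}_w^s$-almost all of $F$ gives $\mathcal{H}_{w,\delta}^s(F)\le\sum_i(\text{diam}_w U_i)^s<\lambda^{-1}\sum_i\mathcal{H}_w^s(E\cap U_i)\le\lambda^{-1}\mathcal{H}_w^s(E\setminus C)<\lambda^{-1}\epsilon$ (using disjointness and $U_i\cap C=\emptyset$), and $\delta\to0$, $\epsilon\to0$ finish it.

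The remaining step, $D_{w,c}^s(E,x)\ge1$ for $\mathcal{H}_w^s$-a.e.\ $x\in E$, is the genuinely delicate one. Suppose $\lambda<1$ and $\mathcal{H}_w^s(F)>0$ for $F=\{x\in E:D_{w,c}^s(E,x)<\lambda\}$. Restricting to a subset on which the scale below which the convex density stays $<\lambda$ is uniform --- and which is again an $s$-set w.r.t.\ $w$ --- one wants to cover it \emph{efficiently by convex sets} of small $w$-diameter and sum the inequality $\mathcal{H}_w^s(E\cap U)<\lambda(\text{diam}_w U)^s$ to contradict positivity of its measure. In Euclidean space this costs nothing, because the convex hull of a set does not change its diameter; here this is precisely the obstruction, because $w$ is in general not quasi-convex, so that a Euclidean convex hull can strictly enlarge the $w$-diameter, and passing from an arbitrary efficient cover to a convex one only gives $\text{diam}_w(\text{conv}\,S)\le C_0\,\text{diam}_w S$ for some fixed $C_0>1$, hence the weaker bound $D_{w,c}^s(E,x)\ge C_0^{-s}$ a.e. Recovering the sharp constant $1$ is where I expect essentially all of the work to lie: the tools to bring to bear are the exact scaling $w(A^mx)=q^{m/n}w(x)$ (which reduces a cover element to a fixed bounded annular region, where $w\asymp\|\cdot\|$ with absolute constants) together with a careful limiting argument forcing the distortion involved in convexifying a cover element to become negligible, so that an efficient convex cover of the restricted set really is available up to the same vanishing error. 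Once $D_{w,c}^s(E,x)\ge1$ a.e.\ on $E$ is established, combining it with the upper bound above yields $D_{w,c}^s(E,x)=1$ for $\mathcal{H}_w^s$-a.e.\ $x\in E$, completing the proof.
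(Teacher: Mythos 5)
Your decomposition into the three statements $D_{w,c}^s(E,x)\le 1$ a.e.\ on $E$, $D_{w,c}^s(E,x)=0$ a.e.\ on $E^c$, and $D_{w,c}^s(E,x)\ge 1$ a.e.\ on $E$ is exactly the paper's (Lemmas \ref{t-4-4} and \ref{t-4-5}), and your Vitali arguments for the first two are correct and essentially identical to the paper's, which likewise rests on a pseudo-norm Vitali covering theorem (Theorem \ref{t-4-3}, where the enlargement constant is a power of the $\beta$ from Lemma \ref{t-2-3}) together with regularity of $\mathcal{H}_w^s\restriction E$. The problem is the third statement: you do not prove it. You reduce it to producing, for the bad set $F$, a $\rho$-cover by \emph{convex} sets with $\sum_i(\text{diam}_w U_i)^s$ close to $\mathcal{H}_w^s(F)$; you correctly observe that convexifying a cover element can multiply its $w$-diameter by a constant $C_0>1$ (indeed $\text{diam}_w[x,y]=\sup_{u\in[0,1]}w(u(x-y))$ can exceed $w(x-y)$ since $w$ need not be monotone along segments); and you then stop, having established only $D_{w,c}^s(E,x)\ge C_0^{-s}$ a.e. The closing paragraph about ``a careful limiting argument forcing the distortion to become negligible'' is a statement of intent, not an argument, and the one concrete tool you name does not work as described: the ratio $\text{diam}_w(\text{conv}\,S)/\text{diam}_w S$ is invariant under $S\mapsto A^{-m}S$, since both quantities scale by $q^{-m/n}$, so rescaling into a fixed annulus does not make the convexification loss small at small scales. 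As written the proposal therefore does not yield $D_{w,c}^s(E,x)\ge 1$ a.e., hence not the theorem.

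For comparison, the paper's proof of this step (the first half of Lemma \ref{t-4-5}) fixes $\alpha<1$ and $\rho>0$, lets $F$ be the set of $x\in E$ with $\mathcal{H}_w^s(E\cap U)\le\alpha(\text{diam}_w U)^s$ for all convex $U\ni x$ with $\text{diam}_w U\le\rho$, and simply takes ``a $\rho$-cover of $F$ by convex sets $\{U_i\}$ with $\sum_i(\text{diam}_w U_i)^s<\mathcal{H}_w^s(F)+\varepsilon$,'' concluding $\mathcal{H}_w^s(F)\le\alpha(\mathcal{H}_w^s(F)+\varepsilon)$ and hence $\mathcal{H}_w^s(F)=0$. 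In other words, the paper obtains the sharp constant by asserting precisely the statement you identified as the obstruction, namely that near-optimal covers may be taken convex at no cost in $\sum_i(\text{diam}_w U_i)^s$ (which is free for the Euclidean norm but requires justification for $w$). Your diagnosis of where the difficulty sits is accurate — more candid, in fact, than the paper's one-line assertion — but neither your sketch nor that assertion supplies the missing justification. To complete your proof you would need to actually show that convex $\rho$-covers compute $\mathcal{H}_{w,\rho}^s$, or otherwise recover the constant $1$; until then the a.e.\ lower bound, and with it the theorem, remains open in your write-up.
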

We will prove Theorem \ref{t-4-1} by showing that $D_{w,c}^s(E,x)=0$ at $\mathcal{H}_w^s$-almost all $x\in E^c$ (Lemma \ref{t-4-4}) and $D_{w,c}^s(E,x)=1$ at $\mathcal{H}_w^s$-almost all $x\in E$ (Lemma \ref{t-4-5}) respectively. We need an analogue of Vitali covering theorem \cite{F} and the following lemma. We should mention that the sets encountered in the following can always be represented in terms of known $\mathcal{H}_w^s$-measurable sets using combinations of $\overline{\lim}$, $\underline{\lim}$, countable unions and intersections. So without explicit mention in this section, we always assume that the sets involved are $\mathcal{H}_w^s$-measurable.


\begin{lemma}\label{t-4-2}
Let $E\subset {\mathbb R}^n$ be ${\mathcal H}_w^s$-measurable with ${\mathcal H}_w^s(E) < +\infty$ and let $\varepsilon > 0$. Then there exists $\rho > 0$, depending only on $E$ and $\varepsilon$, such that for any collection of Borel sets $\{U_i\}_{i=1}^{\infty}$ with $0 < \text{diam}_w U_i \le \rho$, we have
\begin{eqnarray*}
{\mathcal H}_w^s(E\cap \bigcup\limits_i U_i) < \sum_i (\text{diam}_w U_i)^s + \varepsilon.
\end{eqnarray*}
\end{lemma}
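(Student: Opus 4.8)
The plan is to reduce this to the definition of $\mathcal{H}_w^s$ as the increasing limit of the $\mathcal{H}^s_{w,\delta}$. The key point is that $\mathcal{H}^s_w(E) = \sup_{\delta>0}\mathcal{H}^s_{w,\delta}(E) = \lim_{\delta\to 0}\mathcal{H}^s_{w,\delta}(E)$ and this supremum is finite by hypothesis, so for the given $\varepsilon>0$ we may fix $\delta_0>0$ with
$$
\mathcal{H}^s_w(E) < \mathcal{H}^s_{w,\delta_0}(E) + \frac{\varepsilon}{2}.
$$
Since $\mathcal{H}^s_{w,\delta_0}(E)$ is itself an infimum over $\delta_0$-covers, choose a $\delta_0$-cover $\{W_j\}_{j\ge 1}$ of $E$ w.r.t. $w(x)$ with $\sum_j(\mathrm{diam}_w W_j)^s < \mathcal{H}^s_{w,\delta_0}(E) + \frac{\varepsilon}{2}$, hence $\sum_j(\mathrm{diam}_w W_j)^s < \mathcal{H}^s_w(E) + \varepsilon$. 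The idea is then to set $\rho = \delta_0$ (so $\rho$ depends only on $E$ and $\varepsilon$) and, given an arbitrary collection $\{U_i\}$ with $0 < \mathrm{diam}_w U_i \le \rho$, to compare the two systems.

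First I would handle the restriction to $\bigcup_i U_i$: for each fixed $i$, the family $\{U_i \cap W_j\}_{j\ge 1}$ covers $E \cap U_i$, and each such set has $w$-diameter at most $\mathrm{diam}_w U_i \le \rho = \delta_0$. Consequently $\{U_i \cap W_j\}_{i,j}$ is a countable $\delta_0$-cover of $E \cap \bigcup_i U_i$, and by definition of $\mathcal{H}^s_{w,\delta_0}$,
$$
\mathcal{H}^s_w\Bigl(E\cap \bigcup_i U_i\Bigr) \le \mathcal{H}^s_{w,\delta_0}\Bigl(E\cap \bigcup_i U_i\Bigr) \le \sum_{i}\sum_{j}(\mathrm{diam}_w(U_i\cap W_j))^s.
$$
Wait — this bound is not yet what I want, because I need the right-hand side dominated by $\sum_i(\mathrm{diam}_w U_i)^s + \varepsilon$, not by $\sum_i\sum_j(\cdots)$. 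So the comparison must be organized the other way: I would instead write, using that $\{W_j\}$ covers $E$ (not just $E\cap\bigcup U_i$), and that we may refine by intersecting with each $U_i$,
$$
\mathcal{H}^s_w\Bigl(E\cap\bigcup_i U_i\Bigr)\le \sum_j (\mathrm{diam}_w W_j)^s \cdot(\text{multiplicity correction}),
$$
which is still not literally additive. The cleanest route: reverse the roles. Given $\{U_i\}$, note that $\{U_i\}_i\cup\{W_j : W_j\cap E\cap(\bigcup_i U_i)^c\ne\emptyset\}$ is a $\delta_0$-cover of all of $E$ once we also throw the $U_i$ in — but the $U_i$ may have $w$-diameter larger than $\delta_0$ is false since $\mathrm{diam}_w U_i\le\rho=\delta_0$. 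So in fact $\{U_i\}_i \cup \{W_j\}_j$ is a legitimate $\delta_0$-cover of $E$. Therefore
$$
\mathcal{H}^s_w(E) \le \mathcal{H}^s_{w,\delta_0}(E) \le \sum_i(\mathrm{diam}_w U_i)^s + \sum_j(\mathrm{diam}_w W_j)^s,
$$
but this goes in the wrong direction for our purpose too. The actual argument I intend to push through is the standard one: cover $E\setminus\bigcup_i U_i$ by a $\delta_0$-cover $\{W_j'\}$ of total $s$-cost less than $\mathcal{H}^s_{w,\delta_0}(E\setminus\bigcup_i U_i)+\tfrac\varepsilon2$, then $\{U_i\}\cup\{W_j'\}$ covers $E$, giving
$$
\mathcal{H}^s_w(E) \le \sum_i(\mathrm{diam}_w U_i)^s + \mathcal{H}^s_{w,\delta_0}\Bigl(E\setminus\bigcup_i U_i\Bigr)+\frac\varepsilon2,
$$
and since $\mathcal{H}^s_w(E) = \mathcal{H}^s_w(E\cap\bigcup_i U_i) + \mathcal{H}^s_w(E\setminus\bigcup_i U_i) \ge \mathcal{H}^s_w(E\cap\bigcup_i U_i) + \mathcal{H}^s_{w,\delta_0}(E\setminus\bigcup_i U_i) - \tfrac\varepsilon2$ by the choice of $\delta_0$ applied to the measurable set $E\setminus\bigcup_i U_i$ — here I use additivity of the Borel measure $\mathcal{H}^s_w$ on the disjoint measurable pieces — subtracting yields $\mathcal{H}^s_w(E\cap\bigcup_i U_i) < \sum_i(\mathrm{diam}_w U_i)^s + \varepsilon$, as desired.

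The main obstacle, and the place requiring care, is the use of $\mathcal{H}^s_{w,\delta_0}(E\setminus\bigcup_i U_i)$ being close to $\mathcal{H}^s_w(E\setminus\bigcup_i U_i)$: a priori the choice of $\delta_0$ was made for $E$, not for the subset $E\setminus\bigcup_i U_i$, and $\rho$ must depend only on $E$ and $\varepsilon$. The fix is to observe that $\mathcal{H}^s_{w,\delta}$ is monotone in $\delta$ and that for \emph{any} measurable $F\subseteq E$ one has $\mathcal{H}^s_w(F) - \mathcal{H}^s_{w,\delta_0}(F) \le \mathcal{H}^s_w(E) - \mathcal{H}^s_{w,\delta_0}(E)$; this monotonicity-of-the-gap under passing to subsets is the standard lemma (an immediate consequence of the fact that $\mathcal{H}^s_w - \mathcal{H}^s_{w,\delta_0}$ is a nonnegative, countably monotone set function, combined with $\mathcal{H}^s_w(E)<\infty$), and it makes $\delta_0$ work uniformly for all subsets of $E$. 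With that in hand, set $\rho=\delta_0$ and the displayed chain of inequalities above closes, using additivity of $\mathcal{H}^s_w$ (Borel measure) on the disjoint sets $E\cap\bigcup_i U_i$ and $E\setminus\bigcup_i U_i$, and Lemma \ref{t-2-6} if one wants the covers to consist of open sets (not actually needed here). The only genuinely delicate point to state carefully is therefore this uniform-gap lemma; everything else is bookkeeping with infima and the definition of $\mathcal{H}^s_{w,\delta}$.
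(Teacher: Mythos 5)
Your argument is correct and is essentially the paper's own proof: choose $\rho$ so that every $\rho$-cover of $E$ has $s$-cost at least $\mathcal{H}_w^s(E)-\varepsilon/2$, adjoin to $\{U_i\}$ a near-optimal $\rho$-cover of $E\setminus\bigcup_i U_i$, and conclude by additivity of $\mathcal{H}_w^s$ on the two measurable pieces. The ``uniform gap lemma'' you single out as the delicate point is unnecessary: the only inequality you use on the subset $E\setminus\bigcup_i U_i$ is $\mathcal{H}_{w,\delta_0}^s(F)\le \mathcal{H}_w^s(F)$, which holds for every set $F$ and every $\delta_0$ because $\mathcal{H}_w^s=\sup_{\delta>0}\mathcal{H}_{w,\delta}^s$, so the choice of $\delta_0$ only ever matters for $E$ itself, exactly as in the paper.
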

\begin{proof}
By the definition that ${\mathcal H}_w^s = \lim\limits_{\delta\rightarrow 0}{\mathcal H}_{w,\delta}^s$, we may choose $\rho > 0$ such that
\begin{eqnarray}\label{e-4-1}
\mathcal{H}_w^s(E) \le \sum\limits_{i}\text{diam}_w(W_i) + \varepsilon/2
\end{eqnarray}
for any $\rho$-cover $\{W_i\}$ of $E$ w.r.t. $w(x)$.
Given Borel sets $\{U_i\}$ with $0 < \text{diam}_w(U_i)\le \rho$, by the definition of ${\mathcal H}_w^s$, we can find a $\rho$-cover $\{V_i\}$ of $E\setminus \bigcup\limits_{i}U_i$ w.r.t. $w(x)$ satisfying
\begin{eqnarray*}
\mathcal{H}_w^s(E\setminus \bigcup\limits_{i}U_i) + \varepsilon/2 > \sum\limits_{i}\text{diam}_w(V_i).
\end{eqnarray*}
Then $\{U_i\}\cup\{V_i\}$ is a $\rho$-cover of $E$ w.r.t. $w(x)$, and using (\ref{t-4-1}), we have
\begin{eqnarray*}
\mathcal{H}_w^s(E) \le \sum\limits_{i}\text{diam}_w(U_i) + \sum\limits_{i}\text{diam}_w(V_i) + \varepsilon/2.
\end{eqnarray*}
Hence,
\begin{eqnarray*}
\mathcal{H}_w^s(E\cap \bigcup\limits_{i}U_i) &=& \mathcal{H}_w^s(E) - \mathcal{H}_w^s(E\setminus \bigcup\limits_{i}U_i)\\
&<& \sum\limits_{i}\text{diam}_w(U_i) + \sum\limits_{i}\text{diam}_w(V_i) + \varepsilon/2 - \sum\limits_{i}\text{diam}_w(V_i) + \varepsilon/2\\
&=& \sum\limits_{i}\text{diam}_w(U_i) + \varepsilon.
\end{eqnarray*}

\end{proof}

\

A collection of sets ${\mathcal V}$ is called \textit{a Vitali class} for $E$ w.r.t. $w(x)$ if for each $x\in E$ and
 $\delta > 0$, there exists $U\in {\mathcal V}$ with $x\in U$ and $0 < \text{diam}_w U \le \delta$.

\begin{theorem}[Vitali covering theorem]\label{t-4-3}

\

\begin{enumerate}[(a)]
\item Let $E$ be an ${\mathcal H}_w^s$-measurable subset of ${\mathbb R}^n$ and let ${\mathcal V}$ be a
Vitali class of closed sets for $E$ w.r.t. $w(x)$. Then we may select a (finite or countable) disjoint sequence $U_i$
from ${\mathcal V}$ such that either $\sum_i (\text{diam}_w U_i)^s = \infty$ or ${\mathcal H}_w^s (E\setminus \bigcup_i U_i) = 0$.

\item If ${\mathcal H}_w^s(E) < +\infty$, then for any given $\varepsilon > 0$, we may also require that
$${\mathcal H}_w^s(E) \le \sum_i (\text{diam}_w U_i)^s + \varepsilon.$$
\end{enumerate}
\end{theorem}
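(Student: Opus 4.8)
The plan is to adapt the classical proof of the Vitali covering theorem for Hausdorff measure (see \cite{F}), replacing the Euclidean triangle inequality throughout by the quasi-triangle inequality of Lemma \ref{t-2-3}, with $\beta\ge 1$ (which we may assume). After discarding from $\mathcal{V}$ every set of $w$-diameter larger than $1$ — what remains is still a Vitali class for $E$ — I would run the usual greedy selection: set $d_0=\sup\{\text{diam}_w U:U\in\mathcal{V}\}$, pick $U_1\in\mathcal{V}$ with $\text{diam}_w U_1>d_0/2$, and, having chosen disjoint $U_1,\dots,U_k$, put $d_k=\sup\{\text{diam}_w U:U\in\mathcal{V},\ U\cap\bigcup_{i\le k}U_i=\emptyset\}$ (with $\sup\emptyset=0$) and pick $U_{k+1}$ disjoint from $U_1,\dots,U_k$ with $\text{diam}_w U_{k+1}>d_k/2$, as long as $d_k>0$. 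If $d_k=0$ at some stage, then every $U\in\mathcal{V}$ of positive $w$-diameter meets the closed set $\bigcup_{i\le k}U_i$; since Proposition \ref{t-2-2} (comparability of $w$ with a power of $\|\cdot\|$) shows that the $w$-balls $U_w(x,\eta)$ form a neighbourhood basis, the defining property of a Vitali class forces $E\subseteq\bigcup_{i\le k}U_i$ and we are done. Otherwise the selection produces an infinite disjoint sequence $\{U_i\}$; if $\sum_i(\text{diam}_w U_i)^s=\infty$ the first alternative in (a) holds, so we may assume $\sum_i(\text{diam}_w U_i)^s<\infty$, whence $\text{diam}_w U_i\to 0$ and therefore $d_k\le 2\,\text{diam}_w U_{k+1}\to 0$.

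The geometric core is an enlargement estimate. For each $k$ set $\widehat{U_k}=\{y:D_w(y,U_k)\le 2\beta\,\text{diam}_w U_k\}$. I claim that $E\setminus\bigcup_{i\le m}U_i\subseteq\bigcup_{k>m}\widehat{U_k}$ for every $m$: given $x$ in the left-hand side, use the Vitali property to pick $U\in\mathcal{V}$ with $x\in U$ and $\text{diam}_w U$ small enough that $U$ misses the closed set $\bigcup_{i\le m}U_i$; since $d_k\to 0$, $U$ must meet some $U_k$, and if $k$ is the smallest such index then $k>m$ and $\text{diam}_w U\le d_{k-1}<2\,\text{diam}_w U_k$, so Lemma \ref{t-2-3} applied at a point $z\in U\cap U_k$ gives $w(x-y)\le\beta\max\{\text{diam}_w U,\text{diam}_w U_k\}\le 2\beta\,\text{diam}_w U_k$ for every $y\in U_k$, i.e.\ $x\in\widehat{U_k}$. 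Two applications of Lemma \ref{t-2-3}, using that each $U_k$ is compact (closed and $w$-bounded, so the infimum defining $D_w$ is attained), likewise give $\text{diam}_w\widehat{U_k}\le 2\beta^3\,\text{diam}_w U_k$. Hence $\{\widehat{U_k}\}_{k>m}$ is a $\delta_m$-cover of $E\setminus\bigcup_iU_i$ with $\delta_m:=\sup_{k>m}\text{diam}_w\widehat{U_k}\to 0$, so
$$
\mathcal{H}_{w,\delta_m}^s\Big(E\setminus\bigcup_iU_i\Big)\le\sum_{k>m}(\text{diam}_w\widehat{U_k})^s\le(2\beta^3)^s\sum_{k>m}(\text{diam}_w U_k)^s,
$$
and the right-hand side, being the tail of a convergent series, tends to $0$ as $m\to\infty$. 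Since $\mathcal{H}_{w,\delta}^s$ increases as $\delta\downarrow 0$ and $\delta_m\downarrow 0$, letting $m\to\infty$ forces $\mathcal{H}_w^s(E\setminus\bigcup_iU_i)=0$, which proves (a).

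For (b), assume $\mathcal{H}_w^s(E)<\infty$ and fix $\varepsilon>0$. By Lemma \ref{t-4-2} there is $\rho>0$ such that $\mathcal{H}_w^s(E\cap\bigcup_jV_j)<\sum_j(\text{diam}_w V_j)^s+\varepsilon$ for every countable family of Borel sets $V_j$ with $0<\text{diam}_w V_j\le\rho$. Applying part (a) to the Vitali class $\{U\in\mathcal{V}:\text{diam}_w U\le\rho\}$ yields a disjoint sequence $\{U_i\}$ with $\text{diam}_w U_i\le\rho$; if $\sum_i(\text{diam}_w U_i)^s=\infty$ the asserted inequality is vacuous, and otherwise part (a) gives $\mathcal{H}_w^s(E\setminus\bigcup_iU_i)=0$, so $\mathcal{H}_w^s(E)=\mathcal{H}_w^s(E\cap\bigcup_iU_i)<\sum_i(\text{diam}_w U_i)^s+\varepsilon$ by the choice of $\rho$.

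The main obstacle is the enlargement estimate: in the pseudo-norm setting a set meeting $U_k$ need not lie inside a ``ball'' concentric with $U_k$ of comparable radius, so one cannot simply fatten $U_k$ to a ball as in the Euclidean argument; instead the constants $\beta$ and $\beta^3$ must be propagated carefully through repeated use of $w(x+y)\le\beta\max\{w(x),w(y)\}$. The remaining ingredients — that $w$-balls are genuine neighbourhoods (via Proposition \ref{t-2-2}), that the selected sets are compact so $D_w$-infima are attained, and the bookkeeping relating $\mathcal{H}_{w,\delta_m}^s$ to $\mathcal{H}_w^s$ — are routine but should be carried out with a little care.
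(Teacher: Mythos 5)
Your argument is correct and follows essentially the same route as the paper's proof: the greedy selection with the factor $1/2$, the termination case $d_k=0$ handled via the Vitali property, the enlargement of each $U_k$ by $2\beta\,\mathrm{diam}_w U_k$ controlled through Lemma \ref{t-2-3}, the tail-sum estimate for $\mathcal{H}^s_{w,\delta_m}$, and the reduction of (b) to Lemma \ref{t-4-2}. The only differences are cosmetic (you fatten $U_k$ to a $D_w$-neighbourhood rather than a pseudo ball centered in $U_k$, getting the constant $2\beta^3$ in place of the paper's $2\beta^2$, and you spell out why a set of small $w$-diameter containing $x$ misses the closed finite union), and none of this affects the proof.
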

\begin{proof}{\bf(a).} Fix $\rho > 0$. We may assume that $\text{diam}_w U \le \rho$ for all $U \in {\mathcal V}$.  Let $U_1\in{\mathcal V}$ and $U_1\cap E\ne \emptyset$. We choose $U_i$, $i\geq 2$ inductively.
Suppose that $U_1, \dotsc, U_m$ have been chosen, and let
\begin{eqnarray*}
d_m = \sup\{\text{diam}_w U: U\in{\mathcal V} \ \text{and} \ U\cap U_i = \emptyset, i = 1, 2, \dotsc m\}.
\end{eqnarray*}
Note that $\{d_m\}_{m\ge 1}$ is decreasing. If $d_m = 0$, then $E\subset \bigcup\limits_{i=1}^m U_i$.
Indeed, if there existed a point $x\in E\setminus \bigcup\limits_{i=1}^m U_i$, then, letting
\begin{eqnarray*}
\delta_x =\frac{1}{2}\inf \{{w(x-y), y\in\bigcup\limits_{i=1}^m U_i}\} > 0,
 \end{eqnarray*}
we could find  $U\in\mathcal{V}$ such that $x\in U$ and $0 < \text{diam}_w U < \delta_x$,
contradicting the fact that $d_m = 0$. So (a) follows and the process terminates. Otherwise, let $U_{m+1}\in {\mathcal V}$ be a set satisfying $U_{m+1}\cap (\bigcup\limits_{i=1}^m U_i) = \emptyset$ and $\text{diam}_w (U_{m+1})\ge \frac{1}{2}d_m$.

  Suppose that the process continues indefinitely and that
  $\sum (\text{diam}_w U_i)^s < \infty$. For each $i$,
  let $B_i$ be a  pseudo  ball  centered in $U_i$  with radius
  $2\,\beta\,\text{diam}_w(U_i)$, where $\beta$ is the constant
  in Lemma \ref{t-2-3}. We claim that for every $k \geq 1$,
\begin{eqnarray}\label{e-4-2}
E\setminus \bigcup\limits_{i=1}^k U_i \subset \bigcup\limits_{i=k+1}^{\infty} B_i.
\end{eqnarray}
In fact, for $x\in E\setminus \bigcup\limits_{i=1}^k U_i$, there exists $U\in{\mathcal V}$ with $x\in U$ and $U\cap(\bigcup\limits_{i=1}^k U_i) = \emptyset$. By the assumption that $\sum (\text{diam}_w U_i)^s < \infty$, we obtain that  $\lim\limits_{i\rightarrow \infty} \text{diam}_w U_i = 0$.
Hence, we have $\text{diam}_w U > 2\,\text{diam}_w U_{\ell}\ge d_{\ell-1}$ for some $\ell\ge k+2$.
If $U\cap U_j=\emptyset$ for $k<j<\ell$ and thus for $1\le j\le\ell-1$,
it would follow that
$$
\text{diam}_w U > 2\,\text{diam}_w U_{\ell}\ge d_{l-1}\ge\text{diam}_w U,
$$
a contradiction. Let thus $i$ be the smallest integer $j$ with $k<j<\ell$ such that
 $U\cap U_j\ne \emptyset$. Since $U\cap U_j=\emptyset$ for $1\le j\le i-1$,
 we have
 $$
 \text{diam}_w U \le d_{i-1}\le 2\,\text{diam}_w U_{i}.
 $$
 By elementary geometry, we have  $U\subset B_i$ and (\ref{e-4-2}) follows.

Thus, if $\delta > 0$,
\begin{eqnarray*}
{\mathcal H}_{w,\delta}^s(E\setminus \bigcup\limits_{i=1}^{\infty}U_i) \le {\mathcal H}_{w,\delta}^s(E\setminus \bigcup\limits_{i=1}^k U_i) \le \sum\limits_{i=k+1}^{\infty}(\text{diam}_w B_i)^s \le  2^s\beta^{2s}\sum\limits_{i=k+1}^{\infty}(\text{diam}_w U_i)^s,
\end{eqnarray*}
provided that $k$ is large enough to ensure that $\text{diam}_w B_i \le \delta$ for $i > k$. Hence, for all $\delta > 0$,
$${\mathcal H}_{w,\delta}^s(E\setminus \bigcup\limits_{i=1}^{\infty}U_i) = 0.$$
So ${\mathcal H}_w^s(E\setminus \bigcup\limits_{i=1}^{\infty}U_i) = 0$ which proves (a).

\noindent {\bf (b).} Suppose that $\rho$ chosen at the beginning of the proof is the number corresponding to $\varepsilon$ and $E$ given in Lemma \ref{t-4-2}. If $\sum_i (\text{diam}_w U_i)^s = +\infty$, then (b) is obvious. Otherwise, by (a) and Lemma \ref{t-4-2}, we obtain
\begin{eqnarray*}
{\mathcal H}_w^s(E) &=& {\mathcal H}_w^s(E\setminus \bigcup\limits_{i=1}^{\infty}U_i) + {\mathcal H}_w^s(E\cap (\bigcup\limits_{i=1}^{\infty}U_i))\\
 &=& 0 + {\mathcal H}_w^s(E\cap \bigcup\limits_{i=1}^{\infty}U_i) <  \sum\limits_{i=1}^{\infty} (\text{diam}_w U_i)^s + \varepsilon.
\end{eqnarray*}
\end{proof}

\begin{lemma}\label{t-4-4}
If $E$ is an $s$-set w.r.t.~$w(x)$ in $\mathbb{R}^n$, then $D_{w,c}^s(E,x)=0$ for $\mathcal{H}_w^s$-almost all $x\in E^c$.
\end{lemma}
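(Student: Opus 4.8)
\textbf{Proof proposal for Lemma \ref{t-4-4}.}

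The plan is to mimic the classical argument (Theorem 2.2 in \cite{F}) that the upper convex density of an $s$-set vanishes almost everywhere off the set, adapting it to the pseudo norm $w(x)$. Fix a constant $c$ with $0<c<1$ and define, for each $t>0$,
$$
F = F_{c} = \{x\in E^c : D_{w,c}^s(E,x) > c\}.
$$
It suffices to show $\mathcal{H}_w^s(F)=0$, since then letting $c\to 0$ through a sequence gives $D_{w,c}^s(E,x)=0$ for $\mathcal{H}_w^s$-a.e. $x\in E^c$. Because $E$ is $\mathcal{H}_w^s$-measurable with finite measure and $F\subset E^c$, the measure $\mathcal{H}_w^s\restriction E$ is a finite Borel measure that assigns zero mass to $F$; so by outer regularity (or directly from the definition of $\mathcal{H}_w^s$ as an infimum over covers, combined with Lemma \ref{t-2-6}) we can, given any $\varepsilon>0$, find an open set $W\supset F$ with $\mathcal{H}_w^s(E\cap W)<\varepsilon$. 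We may also shrink $W$ so that it is contained in a fixed bounded region, since $E$ is bounded and $F$ can be intersected with a large ball first.

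Next I would build a Vitali class for $F$ inside $W$. For each $x\in F$, the inequality $D_{w,c}^s(E,x)>c$ provides, for arbitrarily small $\delta>0$, a convex set $U$ with $x\in U$, $0<\text{diam}_w U\le\delta$, $U\subset W$, and $\mathcal{H}_w^s(E\cap U) > c\,(\text{diam}_w U)^s$. Replacing each such $U$ by its closure (which is still convex, does not increase $\text{diam}_w U$ since $w$ is continuous, and can be kept inside $W$ by taking $\delta$ small) yields a Vitali class $\mathcal{V}$ of closed convex sets for $F$ w.r.t.\ $w(x)$. Apply Theorem \ref{t-4-3}(a) to extract a disjoint sequence $\{U_i\}\subset\mathcal{V}$. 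If $\sum_i(\text{diam}_w U_i)^s=\infty$ we would get a contradiction with $\sum_i(\text{diam}_w U_i)^s < c^{-1}\sum_i \mathcal{H}_w^s(E\cap U_i) \le c^{-1}\mathcal{H}_w^s(E\cap W)<\infty$ using disjointness of the $U_i$; hence instead $\mathcal{H}_w^s(F\setminus\bigcup_i U_i)=0$, so
$$
\mathcal{H}_w^s(F) = \mathcal{H}_w^s\Big(F\cap\bigcup_i U_i\Big) \le \sum_i \mathcal{H}_w^s(F\cap U_i).
$$
Now the key estimate: each $U_i\in\mathcal{V}$ satisfies $\mathcal{H}_w^s(E\cap U_i)>c\,(\text{diam}_w U_i)^s$; but the $U_i$ are disjoint and all contained in $W$, so $\sum_i (\text{diam}_w U_i)^s < c^{-1}\sum_i\mathcal{H}_w^s(E\cap U_i) = c^{-1}\mathcal{H}_w^s\big(E\cap\bigcup_i U_i\big)\le c^{-1}\mathcal{H}_w^s(E\cap W) < c^{-1}\varepsilon$.

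Finally I would convert $\sum_i(\text{diam}_w U_i)^s < c^{-1}\varepsilon$ into a bound on $\mathcal{H}_w^s(F)$. Since $\{U_i\}$ is (after the argument above) a $\rho$-cover of $\mathcal{H}_w^s$-almost all of $F$ for every $\rho>0$ (the $\text{diam}_w U_i$ tend to $0$, so for any fixed $\rho$ only finitely many exceed $\rho$, and those cover a set we can absorb by re-running the selection on the tail), we get $\mathcal{H}_w^s(F) \le \sum_i(\text{diam}_w U_i)^s < c^{-1}\varepsilon$; a cleaner route is to invoke Theorem \ref{t-4-3}(b) directly on $F$, since $\mathcal{H}_w^s(F)\le\mathcal{H}_w^s(E^c)$ need not be finite, so one should instead apply (b) to the finite-measure set $E\cap W$ or argue via Lemma \ref{t-4-2} with the collection $\{U_i\}$ to get $\mathcal{H}_w^s(F)=\mathcal{H}_w^s(F\cap\bigcup U_i)\le \sum_i(\text{diam}_w U_i)^s+\varepsilon' < c^{-1}\varepsilon+\varepsilon'$. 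Letting $\varepsilon,\varepsilon'\to 0$ gives $\mathcal{H}_w^s(F)=0$, and then $c\to 0$ finishes the proof. The main obstacle is handling the fact that $\mathcal{H}_w^s$ restricted to $E^c$ is not a priori a finite measure: everything must be phrased in terms of the finite measure $\mathcal{H}_w^s\restriction E$ (equivalently, controlling covers of $F$ by their $w$-diameters via Lemma \ref{t-4-2} rather than by $\mathcal{H}_w^s(F)$ directly), and one must make sure the Vitali sets are kept inside the small open neighbourhood $W$ so that the overlap count against $E$ stays bounded by $\varepsilon$.
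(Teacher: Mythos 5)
Your proposal is correct and follows essentially the same route as the paper: build a Vitali class of small closed convex sets on which $\mathcal{H}_w^s(E\cap U)>c\,(\operatorname{diam}_w U)^s$, extract a disjoint subfamily via Theorem \ref{t-4-3}(a), and bound $\mathcal{H}_{w,\rho}^s(F)$ by $\sum_i(\operatorname{diam}_w U_i)^s$. The only (cosmetic) difference is the localization: you take an open $W\supset F$ with $\mathcal{H}_w^s(E\cap W)<\varepsilon$ and force $U\subset W$, whereas the paper takes a closed $E_1\subset E$ with $\mathcal{H}_w^s(E\setminus E_1)<\delta$ and forces $U\cap E_1=\emptyset$; both give the same key estimate $\sum_i\mathcal{H}_w^s(E\cap U_i)<\varepsilon$.
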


\begin{proof}
Fix $\alpha > 0$, we show that the measurable set $F = \{x\notin E: D_w^s(E, x) > \alpha\}$ has zero pseudo Hausdorff measure.
By the regularity of ${\mathcal H}_w^s$, for any given $\delta > 0$, there exists a closed set $E_1\subset E$
such that ${\mathcal H}_w^s(E\setminus E_1) < \delta$. For $\rho > 0$, let
\begin{eqnarray*}\label{e-4-3}
{\mathcal V} = \{ U \ \text{closed \& convex}: 0<\text{diam}_w U\le\rho,\,\,
U\cap E_1 = \emptyset,\,\, {\mathcal H}_w^s(E\cap U) > \alpha (\text{diam}_w U)^s\}.
\end{eqnarray*}
Then ${\mathcal V}$ is a  Vitali class of closed sets for $F$ w.r.t. $w(x)$. It follows from Theorem \ref{t-4-3} (a) that we can find a disjoint sequence of sets $\{U_i\}$ in ${\mathcal V}$ with either $\sum (\text{diam}_w U_i)^s = +\infty$ or ${\mathcal H}_w^s(F\setminus \bigcup\limits_i U_i) = 0$. However, by the definition of ${\mathcal V}$,
\begin{eqnarray*}
\sum\limits_i (\text{diam}_w U_i)^s & < & \frac{1}{\alpha} \sum\limits_i {\mathcal H}_w^s(E \cap U_i)
= \frac{1}{\alpha}{\mathcal H}_w^s(E\cap \bigcup\limits_i U_i) \\
& \le & \frac{1}{\alpha}{\mathcal H}_w^s(E\setminus E_1) < \frac{\delta}{\alpha} < +\infty.
\end{eqnarray*}
This implies that ${\mathcal H}_w^s(F\setminus \bigcup\limits_i U_i) = 0$, and thus we have
\begin{eqnarray*}
{\mathcal H}_{w,\rho}^s(F) &\le& {\mathcal H}_{w,\rho}^s(F\setminus \bigcup\limits_i U_i)
+ {\mathcal H}_{w,\rho}^s(F\cap \bigcup\limits_i U_i)\\
& \le & {\mathcal H}_w^s(F\setminus \bigcup\limits_i U_i) + \sum\limits_i (\text{diam}_w U_i)^s < \frac{\delta}{\alpha} + 0.
\end{eqnarray*}
This is true for any $\delta > 0$ and any $\rho > 0$. So ${\mathcal H}_w^s(F) = 0$.
\end{proof}

\begin{lemma}\label{t-4-5}
If $E$ is an $s$-set  w.r.t.~$w(x)$ in $\mathbb{R}^n$, then $D_{w,c}^s(E,x)=1$ at $\mathcal{H}_w^s$-almost all $x\in E$.
\end{lemma}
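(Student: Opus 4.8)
The plan is to follow the classical argument for the upper convex density of an $s$-set at typical points of the set (Theorem 2.3 in \cite{F}), adapting every step to the pseudo norm $w(x)$ and using the machinery we have just built. First, the upper bound $D_{w,c}^s(E,x)\le 1$ for $\mathcal{H}_w^s$-almost all $x\in E$ is the easier inequality: given $\alpha>1$, consider $F_\alpha=\{x\in E: D_{w,c}^s(E,x)>\alpha\}$; by the regularity of $\mathcal{H}_w^s$ pick an open set $O\supset E$ with $\mathcal{H}_w^s(E\cap O)<\mathcal{H}_w^s(E)+\varepsilon$ (using Lemma \ref{t-4-2} to control the excess in terms of $\rho$-covers), form the Vitali class of closed convex sets $U$ with $U\subset O$, $0<\text{diam}_w U\le\rho$ and $\mathcal{H}_w^s(E\cap U)>\alpha(\text{diam}_w U)^s$, extract a disjoint subfamily $\{U_i\}$ via Theorem \ref{t-4-3}(a) covering $\mathcal{H}_w^s$-almost all of $F_\alpha$, and then estimate
$$
\mathcal{H}_w^s(F_\alpha)\le \sum_i(\text{diam}_w U_i)^s < \tfrac1\alpha\sum_i\mathcal{H}_w^s(E\cap U_i)\le \tfrac1\alpha\,\mathcal{H}_w^s(E\cap O)<\tfrac1\alpha(\mathcal{H}_w^s(E)+\varepsilon).
$$
Letting $\varepsilon\to0$ and covering $E$ by finitely many translates/rescalings if needed to localize, one concludes $\mathcal{H}_w^s(F_\alpha)=0$, hence $D_{w,c}^s(E,x)\le 1$ a.e.\ on $E$.

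Second, for the lower bound $D_{w,c}^s(E,x)\ge 1$ a.e.\ on $E$, I would fix $\alpha<1$ and set $G_\alpha=\{x\in E: D_{w,c}^s(E,x)<\alpha\}$, aiming to show $\mathcal{H}_w^s(G_\alpha)=0$. Assume for contradiction that $\mathcal{H}_w^s(G_\alpha)>0$. Using Lemma \ref{t-4-4} we know that $\mathcal{H}_w^s$-almost every point of $E^c$ has zero upper convex density, which lets us essentially ignore the ambient complement; more precisely, restrict attention to the $s$-set $G_\alpha$ itself, noting that for $x\in G_\alpha$ and all sufficiently small convex $U\ni x$ one has $\mathcal{H}_w^s(G_\alpha\cap U)\le\mathcal{H}_w^s(E\cap U)<\alpha(\text{diam}_w U)^s$. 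Then the collection of such convex sets $U$ forms a Vitali class for $G_\alpha$; apply Theorem \ref{t-4-3}(b) to obtain a disjoint sequence $\{U_i\}$ from this class with $\mathcal{H}_w^s(G_\alpha)\le\sum_i(\text{diam}_w U_i)^s+\varepsilon$. Combining,
$$
\mathcal{H}_w^s(G_\alpha)\le\sum_i(\text{diam}_w U_i)^s+\varepsilon<\tfrac1\alpha\sum_i\mathcal{H}_w^s(G_\alpha\cap U_i)+\varepsilon\le\tfrac1\alpha\,\mathcal{H}_w^s(G_\alpha)+\varepsilon,
$$
and since $\alpha<1$ this forces $\mathcal{H}_w^s(G_\alpha)\le\tfrac{\alpha}{1-\alpha}\varepsilon$ for every $\varepsilon>0$, a contradiction with $\mathcal{H}_w^s(G_\alpha)>0$. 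Taking a sequence $\alpha_k\uparrow1$ yields $D_{w,c}^s(E,x)\ge1$ for $\mathcal{H}_w^s$-almost all $x\in E$, which together with the first part gives $D_{w,c}^s(E,x)=1$ a.e.\ on $E$.

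The main obstacle I anticipate is purely technical and caused by the failure of the triangle inequality for $w(x)$: the "elementary geometry" step asserting $U\subset B_i$ in the proof of Theorem \ref{t-4-3}(a), and more generally any step where one wants to enlarge a convex set of small $w$-diameter to a pseudo ball containing it, requires care — one must use Lemma \ref{t-2-3} (with the constant $\beta$) and possibly the comparison with the Euclidean norm in Proposition \ref{t-2-2} to guarantee that a convex set $U$ with $x\in U$ and $\text{diam}_w U$ small is contained in $U_w(x,\beta\,\text{diam}_w U)$, which is what makes the Vitali extraction and the density comparisons go through with only a multiplicative constant loss that does not affect the final limit $r\to0$. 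A second minor point is ensuring measurability of $F_\alpha$ and $G_\alpha$: as remarked before the statement, these sets are built from $\mathcal{H}_w^s$-measurable sets via $\overline{\lim}$, $\underline{\lim}$ and countable operations, so they are $\mathcal{H}_w^s$-measurable, and I will invoke this without further comment. Everything else is a faithful transcription of the Euclidean argument, with $\text{diam}$, balls, and the Vitali covering theorem replaced by their $w$-analogues established above.
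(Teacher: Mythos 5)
Your overall architecture (two inequalities, Vitali machinery, the density-zero Lemma \ref{t-4-4}) matches the paper's, but both halves contain genuine errors. For the upper bound $D_{w,c}^s(E,x)\le 1$: your chain terminates with $\mathcal{H}_w^s(F_\alpha)\le\frac{1}{\alpha}\,\mathcal{H}_w^s(E)$, which is not zero, because $\sum_i\mathcal{H}_w^s(E\cap U_i)$ is only controlled by the total mass of $E$; your choice $O\supset E$ makes the condition $\mathcal{H}_w^s(E\cap O)<\mathcal{H}_w^s(E)+\varepsilon$ vacuous, and ``covering $E$ by finitely many translates/rescalings'' does not repair this, since the density at $x$ is computed relative to all of $E$. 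The missing idea is precisely where the paper invokes Lemma \ref{t-4-4}: with $F=\{x\in E: D_{w,c}^s(E,x)>\alpha\}$ and $F_0=\{x\in F: D_{w,c}^s(E\setminus F,x)=0\}$ one has $\mathcal{H}_w^s(F\setminus F_0)=0$ and $D_{w,c}^s(F,x)>\alpha$ on $F_0$, so the Vitali class can be taken with $\mathcal{H}_w^s(F\cap U)>\alpha(\text{diam}_w U)^s$; disjointness then gives $\sum_i\mathcal{H}_w^s(F\cap U_i)\le\mathcal{H}_w^s(F)$ and the loop closes to $\mathcal{H}_w^s(F)\le\frac{1}{\alpha}\mathcal{H}_w^s(F)+\varepsilon$ with $\alpha>1$, forcing $\mathcal{H}_w^s(F)=0$. (Outer regularity with an open set containing $F_\alpha$, not $E$, whose $E$-mass is within $\varepsilon$ of $\mathcal{H}_w^s(F_\alpha)$, would be an acceptable substitute.)

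For the lower bound $D_{w,c}^s(E,x)\ge 1$ your displayed chain is incoherent: the sets in your class satisfy $\mathcal{H}_w^s(E\cap U)<\alpha(\text{diam}_w U)^s$, which yields $(\text{diam}_w U_i)^s>\frac{1}{\alpha}\mathcal{H}_w^s(G_\alpha\cap U_i)$ --- the reverse of the middle inequality you wrote --- and the final step ``$\mathcal{H}_w^s(G_\alpha)\le\frac{1}{\alpha}\mathcal{H}_w^s(G_\alpha)+\varepsilon$ forces $\mathcal{H}_w^s(G_\alpha)\le\frac{\alpha}{1-\alpha}\varepsilon$'' is vacuous because $\frac{1}{\alpha}>1$ when $\alpha<1$. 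Moreover Vitali is the wrong tool for this half: Theorem \ref{t-4-3}(b) gives a lower bound on $\sum_i(\text{diam}_w U_i)^s$, whereas you need an upper bound. The paper instead fixes $\rho>0$, lets $F$ be the set of $x\in E$ such that $\mathcal{H}_w^s(E\cap U)\le\alpha(\text{diam}_w U)^s$ for \emph{all} convex $U\ni x$ with $\text{diam}_w U\le\rho$, and takes a near-optimal $\rho$-cover of $F$ by convex sets, each meeting $F$, with $\sum_i(\text{diam}_w U_i)^s<\mathcal{H}_w^s(F)+\varepsilon$; then $\mathcal{H}_w^s(F)\le\sum_i\mathcal{H}_w^s(E\cap U_i)\le\alpha\sum_i(\text{diam}_w U_i)^s\le\alpha(\mathcal{H}_w^s(F)+\varepsilon)$, and $\alpha<1$ gives $\mathcal{H}_w^s(F)=0$; varying $\rho$ and $\alpha$ finishes. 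Your anticipated difficulty about $U\subset B_i$ and the constant $\beta$ is real but is already absorbed into the proof of Theorem \ref{t-4-3} and is not the obstacle here.
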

\begin{proof}
Firstly, we use the definition of pseudo Hausdorff measure w.r.t. $w(x)$ to show that $D_{w,c}^s(E,x)\ge 1$ a.e. in $E$. Take
$\alpha < 1$ and $\rho > 0$. Let
\begin{eqnarray*}\label{e-4-1-a}
F = \{x\in E: \mathcal{H}_w^s(E\cap U)\le \alpha
(\text{diam}_w U)^s \text { for all convex} \ U \ \text{with} \ x\in U \ \text{and} \ \text{diam}_w U\le \rho\}.
\end{eqnarray*}
 For any $\varepsilon > 0$, we may find a $\rho$-cover of $F$ by convex sets $\{U_i\}$
such that
\begin{eqnarray*}
\sum (\text{diam}_w U_i)^s < \mathcal {H}_w^s (F) + \varepsilon.
\end{eqnarray*}
Hence, assuming that each $U_i$ contains some points of $F$ and using the definition of $F$, we obtain
\begin{eqnarray*}
\mathcal {H}_w^s (F) \le \sum_i \mathcal {H}_w^s (F\cap U_i) \le \sum_i \mathcal {H}_w^s (E\cap U_i)
\le \alpha \sum_i(\text{diam}_w U_i)^s \le \alpha\,(\mathcal {H}_w^s (F)+\varepsilon).
\end{eqnarray*}
Since $\alpha < 1$ and the outer inequality holds for all $\varepsilon > 0$, we conclude that $\mathcal {H}_w^s (F) = 0$.
We may define such $F$ for any $\rho > 0$. So $D_{w,c}^s(E,x) \ge \alpha$ for a.e. $x\in E$ by the definition. This is true for all $\alpha < 1$, so we conclude that $D_{w,c}^s(E,x) \ge 1$ a.e. in $E$.

\vspace{0.2cm}

\noindent Secondly, we use a  Vitali method to show that $D_{w,c}^s(E,x)\le 1$ a.e. in $E$. Given $\alpha > 1$,
let $F:=\{x\in E: D_{w,c}^s(E,x) > \alpha\}$ be a measurable subset of $E$ and let
$$F_0=\{x\in F: D_{w,c}^s(E\setminus F,x) = 0\}.$$
Then ${\mathcal H}_w^s(F\setminus F_0) = 0$ by Lemma \ref{t-4-4}. By the definition of upper convex  $s$-density, for  $x\in F_0$, we have
\begin{eqnarray*}\label{e-4-3-a}
D_{w,c}^s(F,x)\ge D_{w,c}^s(E,x) - D_{w,c}^s(E\setminus F,x) > \alpha.
\end{eqnarray*}
Thus,
\begin{eqnarray}\label{e-4-3-b}
{\mathcal V} = \{ U \ \text{closed \& convex}: {\mathcal H}_w^s(F\cap U) > \alpha\,(\text{diam}_w U)^s\}
 \end{eqnarray}
 is a  Vitali class for $F_0$. Then, by Theorem \ref{t-4-3} (b), for any given $\varepsilon > 0$, we can find a disjoint sequence $\{U_i\}_i$ in ${\mathcal V}$ such that ${\mathcal H}_w^s (F_0)\le \sum\limits_{i}(\text{diam}_w U_i)^s + \varepsilon$. By (\ref{e-4-3-b}), we obtain that
 \begin{eqnarray*}
 {\mathcal H}_w^s(F) = {\mathcal H}_w^s(F_0) \le \sum\limits_{i}(\text{diam}_w U_i)^s + \varepsilon < \frac{1}{\alpha} \sum\limits_{i}{\mathcal H}_w^s(F\cap U_i) + \varepsilon \le \frac{1}{\alpha}{\mathcal H}_w^s(F) + \varepsilon.
 \end{eqnarray*}
 This inequality holds for any $\varepsilon > 0$. Hence, we have ${\mathcal H}_w^s(F) = 0$ if $\alpha > 1$ as required.
\end{proof}

Theorem \ref{t-3-1} implies that if the IFS $\{f_d\}_{d\in\mathcal{D}}$ satisfies the OSC,
then the corresponding self-affine set $K:=K(A,\mathcal{D})$ is an $s$-set w.r.t. $w(x)$,
where $s=\dim_H^w K= n\,\ln(\# \mathcal{D})/\ln(q)$ is the pseudo similarity dimension of $K$. Thus Theorem \ref{t-4-1} can be applied to $K$ directly.

\vspace{0.3cm}

\section{The upper $s$-density of $\mu$ w.r.t. $w(x)$ }

In this section, let $\mu$ be a Borel measure on $\mathbb{R}^n$, we use the pseudo norm $w(x)$ instead of the Euclidean norm to define the \textit{upper $s$-density of $\mu$ w.r.t. $w(x)$}. It will be used to find
a different expression for the pseudo Hausdorff measure of $K(A, {\mathcal D})$.  This is motivated by the connection between the upper $s$-density of $\mu$  in (\ref{e-1-1}) which was first introduced in \cite{FG} and the Hausdorff measure of a self-similar set $K(A, {\mathcal D})$.

\vspace{0.2cm}

\begin{definition}\label{d-5-1}
Let $\mu$ be a  Borel  measure in $\mathbb{R}^n$. The \emph{ upper $s$-density of $\mu$ w.r.t. $w(x)$} is defined by
$$
\mathcal{E}_{w,s}^+(\mu) = \lim\limits_{r\rightarrow\infty}\sup\limits_{\text{diam}_w U \ge r>0}\frac{\mu(U)}{(\text{diam}_w U)^s},
$$
where the supremum is over all compact convex sets $U\subseteq {\mathbb R}^n$ with $\text{diam}_w U\ge r>0$.

\end{definition}

 Let $\mu$ be a Borel measure and let $\sigma$ be a Borel probability measure. The convolution $\mu*\sigma$ is defined  to be the measure so that
\begin{eqnarray*}
\int_{\mathbb{R}^n}\phi(z)\,d(\mu*\sigma)(z):=\int_{\mathbb{R}^n}\int_{\mathbb{R}^n}\phi(x+y)\,d\mu(x)\,d\sigma(y),
\end{eqnarray*}
holds for any compactly supported continuous function $\phi$ on $\mathbb{R}^n$.

\begin{lemma}\label{t-5-1}
Let $\mu$ and $\sigma$ be  two Borel measures on $\mathbb{R}^n$ with  $\sigma$ being a  probability measure. Then
$\mathcal{E}_{w,s}^+(\mu*\sigma)=\mathcal{E}_{w,s}^+(\mu).$
\end{lemma}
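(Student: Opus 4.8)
The plan is to establish the two inequalities $\mathcal{E}_{w,s}^+(\mu*\sigma)\le\mathcal{E}_{w,s}^+(\mu)$ and $\mathcal{E}_{w,s}^+(\mu*\sigma)\ge\mathcal{E}_{w,s}^+(\mu)$ separately. It is convenient to write, for a Borel measure $\nu$ and $r>0$,
$$
g_\nu(r)=\sup\Big\{\frac{\nu(U)}{(\text{diam}_w U)^s}:\ U\text{ compact convex},\ \text{diam}_w U\ge r\Big\},
$$
which is non-increasing in $r$, so that $\mathcal{E}_{w,s}^+(\nu)=\lim_{r\to\infty}g_\nu(r)$. The one computational input used repeatedly is the identity $(\mu*\sigma)(U)=\int_{\mathbb{R}^n}\mu(U-y)\,d\sigma(y)$ for compact $U$ (from the definition of the convolution together with Tonelli's theorem), combined with the elementary facts that $\text{diam}_w(\cdot)$ is translation invariant and monotone under inclusion.

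For the inequality ``$\le$'', I would fix a compact convex $U$ with $\text{diam}_w U\ge r$. Each translate $U-y$ is compact convex with $\text{diam}_w(U-y)=\text{diam}_w U\ge r$, so $\mu(U-y)\le g_\mu(r)\,(\text{diam}_w U)^s$ for every $y$; integrating against the probability measure $\sigma$ gives $(\mu*\sigma)(U)\le g_\mu(r)\,(\text{diam}_w U)^s$. Taking the supremum over $U$ yields $g_{\mu*\sigma}(r)\le g_\mu(r)$ for all $r$, and letting $r\to\infty$ gives the claim.

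For the inequality ``$\ge$'', fix $\epsilon>0$. Since $\sigma$ is a probability measure, first choose $R>0$ with $\sigma(B)\ge 1-\epsilon$, where $B:=B(0,R)$; then take $\lambda_\epsilon>1$ from Lemma \ref{t-2-4} and set $r_0:=\beta\lambda_\epsilon\,\text{diam}_w B$ with $\beta$ as in Lemma \ref{t-2-3}. Given a compact convex $U$ with $\text{diam}_w U\ge r\ge r_0$, enlarge it to $V:=U+B$, which is again compact convex; since $0\in B$ one has $U\subseteq V$, hence $\text{diam}_w V\ge\text{diam}_w U\ge r$. The key estimate is $\text{diam}_w V\le(1+\epsilon)\text{diam}_w U$: writing two points of $V$ as $u+b$, $u'+b'$ and their difference as $p+q$ with $w(p)\le\text{diam}_w U$ and $w(q)\le\text{diam}_w B$, apply Lemma \ref{t-2-4} when $w(p)>\lambda_\epsilon w(q)$ (giving $w(p+q)<(1+\epsilon)w(p)\le(1+\epsilon)\text{diam}_w U$) and Lemma \ref{t-2-3} otherwise (giving $w(p+q)\le\beta\lambda_\epsilon w(q)\le r_0\le\text{diam}_w U$). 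On the mass side, for $y\in B$ we have $0\in B-y$, hence $U\subseteq V-y$ and $\mu(V-y)\ge\mu(U)$, so $(\mu*\sigma)(V)\ge\int_B\mu(V-y)\,d\sigma(y)\ge(1-\epsilon)\mu(U)$. Combining the two estimates,
$$
\frac{(\mu*\sigma)(V)}{(\text{diam}_w V)^s}\ge\frac{1-\epsilon}{(1+\epsilon)^s}\cdot\frac{\mu(U)}{(\text{diam}_w U)^s},
$$
so $g_{\mu*\sigma}(r)\ge\frac{1-\epsilon}{(1+\epsilon)^s}\,g_\mu(r)$ for all $r\ge r_0$; letting $r\to\infty$ and then $\epsilon\to 0$ completes the argument.

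I expect the ``$\ge$'' direction to be the main obstacle. Passing from a convex set that is efficient for $\mu$ to one efficient for $\mu*\sigma$ forces an enlargement by a fixed bounded set $B$ (chosen so that $\sigma$ puts all but an $\epsilon$-fraction of its mass on $B$), and the crude quasi-triangle inequality of Lemma \ref{t-2-3} controls $\text{diam}_w(U+B)$ only up to the multiplicative constant $\beta$, which would destroy the exact equality. The refinement Lemma \ref{t-2-4} is precisely what allows the bounded perturbation $B$ to be absorbed at the negligible cost of a factor $1+\epsilon$ once $\text{diam}_w U$ is large; the case in which $w(p)$ fails to dominate $w(q)$ is harmless because then the whole difference is bounded by the fixed constant $r_0$, hence by $\text{diam}_w U$. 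The remaining ingredients — monotonicity of $g_\nu$, translation invariance of $\text{diam}_w$, and the convolution identity for $(\mu*\sigma)(U)$ — are routine.
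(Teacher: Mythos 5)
Your proof is correct and follows essentially the same route as the paper's: the easy inequality via the convolution identity and translation-invariance of $\text{diam}_w$, and the reverse inequality by fattening $U$ by a bounded set carrying most of the $\sigma$-mass and controlling the diameter of the enlargement with Lemma \ref{t-2-3} and Lemma \ref{t-2-4}. The only (cosmetic, and in fact slightly cleaner) difference is that you enlarge by a convex Euclidean ball $B$ chosen so that $\sigma(B)\ge 1-\epsilon$, which keeps $U+B$ convex, whereas the paper enlarges by the pseudo-ball $B_w(0,R)$ and sends $R\to\infty$ at the end.
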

\begin{proof}
By the definition of $\mathcal{E}_{w,s}^+(\mu)$ and the convolution of $\mu\ast\sigma$, we get
\begin{eqnarray}\label{e-5-1}
\mathcal{E}_{w,s}^+(\mu*\sigma)&=& \lim\limits_{r\rightarrow\infty}\sup_{\text{diam}_w U\ge r>0}\frac{\mu*\sigma(U)}{(\text{diam}_w U)^s}\nonumber\\
&=&\lim\limits_{r\rightarrow\infty}\sup_{\text{diam}_w U\ge r>0}
\frac{\int_{\mathbb{R}^n}\,\int_{\mathbb{R}^n}\,\chi_{U}(x+y)\,d\mu(x)\,d\sigma(y)}{(\text{diam}_w U)^s}\nonumber\\
&=& \lim\limits_{r\rightarrow\infty}\sup_{\text{diam}_w U\ge r>0}\frac{\int_{\mathbb{R}^n}\,\mu(U-y)\,d\sigma(y)}{(\text{diam}_w U)^s},
\end{eqnarray}
where the supremum is over all convex sets $U\subset{\mathbb R}^n$ with $\text{diam}_w U\ge r>0$. Since $\sigma$ is a Borel probability measure, we have
\begin{eqnarray*}
\lim\limits_{r\rightarrow\infty}\sup_{\text{diam}_w U\ge r>0}
\frac{\int_{\mathbb{R}^n}\,\mu(U-y)\,d\sigma(y)}{(\text{diam}_w U)^s}&\le& \lim\limits_{r\rightarrow\infty}\sup_{\text{diam}_w U\ge r>0}
\sup\limits_{y\in\mathbb{R}^n}\,\frac{\mu(U-y)}{(\text{diam}_w U)^s}\\
&=&\lim\limits_{r\rightarrow\infty}\sup_{\text{diam}_w U\ge r>0}\frac{\mu(U)}{(\text{diam}_w U)^s},
\end{eqnarray*}
which implies that $\mathcal{E}_{w,s}^+(\mu*\sigma)\le \mathcal{E}_{w,s}^+(\mu)$.

For the converse inequality,  fix a real number $R>0$. Let $\epsilon>0$ and
$r\geq \lambda_\epsilon\,\beta^2\,R$ where $\lambda_\epsilon$ is the same as in Lemma \ref{t-2-4} and $\beta$ is defined in Lemma \ref{t-2-3}. For any set $U\subset \mathbb{R}^n$ with $\text{diam}_wU\geq r$, choose a set $\tilde{U}=\bigcup_{y\in B_w(0,R)} (U+y)$. Obviously $U\subset \tilde{U}-y$ for any $y\in B_w(0,R)$, the closed ball centered at $0$ with radius $R$ w.r.t.~$w(x)$.
Moreover, we claim that $\text{diam}_w \tilde{U}\leq (1+\epsilon)\,\text{diam}_wU$. In fact, for any two points $x_1,x_2\in \tilde{U}$, we write $x_i=z_i+y_i$ with $z_i\in U$ and $y_i\in B_w(0,R)$ for $i=1,2$. Then $w(y_1-y_2)\leq \beta R$. If $w(z_1-z_2)>\lambda_\epsilon\, \beta \,R$, then we have
$w(z_1-z_2)>\lambda_\epsilon\, w(y_1-y_2)$, and this gives
$$
w(x_1-x_2)=w((z_1-z_2)+(y_1-y_2))<(1+\epsilon)\,w(z_1-z_2)
$$
by  Lemma \ref{t-2-4}. Otherwise if $w(z_1-z_2)\leq \lambda_\epsilon \beta R$, then we have
$$
w(x_1-x_2)\leq \beta\max\{w(z_1-z_2), w(y_1-y_2)\}\leq \beta\max\{\lambda_\epsilon\beta R, \beta R\}=\lambda_\epsilon\beta^2 R\leq r.
$$
Thus we  have $w(x_1-x_2)\leq (1+\epsilon)\,\text{diam}_w U$ in both cases, which yields the claim since $x_1,x_2$ are arbitrary points in $\tilde{U}$.
Then we have
\[\begin{aligned}
\frac{\int_{B_w(0,R)}\,\mu(U)\,d\sigma(y)}{(\text{diam}_w U)^s}&\leq\frac{\int_{B_{w}(0,R)}\,\mu(\tilde{U}-y)\,d\sigma(y)}{(\text{diam}_w\tilde{U})^s}\cdot\frac{(\text{diam}_w\tilde{U})^s}{(\text{diam}_wU)^s}\\&\leq \frac{\int_{B_{w}(0,R)}\,\mu(\tilde{U}-y)\,d\sigma(y)}{(\text{diam}_w\tilde{U})^s}\cdot(1+\epsilon)^s.
\end{aligned}\]
Hence, we have
\begin{eqnarray*}
&&\lim_{r\rightarrow\infty}\sup_{\text{diam}_wU\geq r>0}\frac{\int_{B_w(0,R)}\,\mu(U)\,
d\sigma(y)}{(\text{diam}_w U)^s}\\
&\leq& \lim_{r\rightarrow\infty}\sup_{\text{diam}_wU'\geq r>0}\frac{\int_{B_{w}(0,R)}\,\mu(U'-y)\,d\sigma(y)}{(\text{diam}_wU')^s}\cdot(1+\epsilon)^s\\
&\leq& \lim_{r\rightarrow\infty}\sup_{\text{diam}_wU'\geq r>0}\frac{\int_{\mathbb{R}^d}\,\mu(U'-y)\,d\sigma(y)}{(\text{diam}_wU')^s}\cdot(1+\epsilon)^s\\
&=&\mathcal{E}_{w,s}^+(\mu*\sigma)\cdot(1+\epsilon)^s.
\end{eqnarray*}
By letting $\epsilon\rightarrow 0$ and $R\rightarrow\infty$, we obtain that $\mathcal{E}_{w,s}^+(\mu)\leq\mathcal{E}_{w,s}^+(\mu*\sigma)$.
\end{proof}

\

\begin{lemma}\label{t-5-2}
Let $\sigma$ be the Borel probability measure supported on $K(A,\mathcal{D})$ which satisfies (\ref{e-3-1}). For  $M\geq 1$,
 define $\mu_M=\sum\limits_{x\in\mathcal{D}_M}\delta_{x}$, then
 for any Borel measurable set $W\subset\mathbb{R}^n$, we have
 $\sigma(A^{-M}W)=\frac{1}{(\#\mathcal{D})^M}\,\left(\mu_M*\sigma\right)(W)$.
\end{lemma}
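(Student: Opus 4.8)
The plan is to derive the identity entirely from the self-similarity relation (\ref{e-3-1}) for $\sigma$, after extending it from $C_c(\mathbb{R}^n)$ to indicator functions of Borel sets and then iterating it $M$ times. First I would observe that both sides of (\ref{e-3-1}), viewed as linear functionals of $f$, are integrals against finite Borel measures on $\mathbb{R}^n$: the measure $\sigma$ on the left, and $\frac{1}{\#\mathcal{D}}\sum_{d\in\mathcal{D}}\sigma\circ f_d^{-1}$ on the right. Since two finite Borel measures that agree on $C_c(\mathbb{R}^n)$ must coincide, (\ref{e-3-1}) remains valid with $f=\chi_W$ for every Borel set $W\subseteq\mathbb{R}^n$, and more generally for every bounded Borel function. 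Iterating $M$ times, each step replacing the integrand $g$ by $g\circ f_d$, then gives
\begin{equation*}
\sigma(V)=\frac{1}{(\#\mathcal{D})^M}\sum_{d_0,\dotsc,d_{M-1}\in\mathcal{D}}\int\chi_V\bigl(f_{d_0}\circ\dotsm\circ f_{d_{M-1}}(x)\bigr)\,d\sigma(x)
\end{equation*}
for every Borel set $V\subseteq\mathbb{R}^n$.

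Next I would compute the iterated map. A straightforward induction on $M$, using $f_d(x)=A^{-1}(x+d)$, shows that
\begin{equation*}
f_{d_0}\circ\dotsm\circ f_{d_{M-1}}(x)=A^{-M}\Bigl(x+{\textstyle\sum_{j=0}^{M-1}A^{M-1-j}d_j}\Bigr),
\end{equation*}
so that, taking $V=A^{-M}W$, the condition $f_{d_0}\circ\dotsm\circ f_{d_{M-1}}(x)\in A^{-M}W$ is equivalent to $x\in W-\sum_{j=0}^{M-1}A^{M-1-j}d_j$. Hence each integral above equals $\sigma\bigl(W-\sum_{j=0}^{M-1}A^{M-1-j}d_j\bigr)$, and we obtain
\begin{equation*}
\sigma(A^{-M}W)=\frac{1}{(\#\mathcal{D})^M}\sum_{d_0,\dotsc,d_{M-1}\in\mathcal{D}}\sigma\Bigl(W-{\textstyle\sum_{j=0}^{M-1}A^{M-1-j}d_j}\Bigr).
\end{equation*}
Reindexing by $k=M-1-j$, as $(d_0,\dotsc,d_{M-1})$ runs over $\mathcal{D}^M$ the vector $\sum_{j}A^{M-1-j}d_j=\sum_{k}A^k d_{M-1-k}$ runs over $\mathcal{D}_M$; the resulting sum of translates $\sum_{x\in\mathcal{D}_M}\sigma(W-x)$ is exactly $(\mu_M*\sigma)(W)$ by the definition of convolution, which gives the claim.

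The induction formula for $f_{d_0}\circ\dotsm\circ f_{d_{M-1}}$ and the change of variables are routine. The one point that needs care is the passage from $C_c(\mathbb{R}^n)$ to Borel indicator functions in (\ref{e-3-1}), which is where the measure-uniqueness argument enters; a second, minor, point is the bookkeeping of multiplicities in the reindexing step, so that the $(\#\mathcal{D})^M$ summands indexed by $\mathcal{D}^M$ match up with $\mu_M$. This is automatic in the regime where the lemma is applied, since there $\#\mathcal{D}_M=(\#\mathcal{D})^M$ by Theorem \ref{th1.3}.
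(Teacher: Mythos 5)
Your proposal is correct and follows essentially the same route as the paper: iterate the invariance identity (\ref{e-3-1}) $M$ times with $f=\chi_{A^{-M}W}$, compute the composed affine maps, and recognize the resulting sum of translates as the convolution $\mu_M*\sigma$. The two points you single out for care --- extending (\ref{e-3-1}) from $C_c(\mathbb{R}^n)$ to Borel indicators via uniqueness of finite Borel measures, and the multiplicity bookkeeping when passing from tuples in $\mathcal{D}^M$ to points of $\mathcal{D}_M$ --- are left implicit in the paper's proof, and your handling of both is sound.
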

\begin{proof}
For any Borel measurable set $W\subset\mathbb{R}^n$, we deduce from the identity (\ref{e-3-1}) that
\begin{eqnarray*}
\sigma(A^{-M}W)
&=&\int_{\mathbb{R}^n}\chi_{A^{-M}W}(x) \ d\sigma(x)\\
&=&\frac{1}{(\#\mathcal{D})^M}\sum\limits_{d_1,d_2,\dotsc,d_M\in\mathcal{D}}\int_{\mathbb{R}^n}
\chi_{A^{-M}W}(A^{-M}x+A^{-1}d_1+\dotsm+A^{-M}d_M) \ d\sigma(x)\\
&=&\frac{1}{(\#\mathcal{D})^M}\sum\limits_{d_1,d_2,\dotsc,d_M\in\mathcal{D}}\int_{\mathbb{R}^n}\chi_{W}(x+A^{M-1}d_1+\dotsb+d_M) \ d\sigma(x)\\
&=&\frac{1}{(\#\mathcal{D})^{M}}\int_{\mathbb{R}^n}\chi_W(x) \ d(\sigma*\mu_M)(x)\\
&=&\frac{1}{(\#\mathcal{D})^M}\sigma*\mu_M(W).
\end{eqnarray*}
\end{proof}

\vspace{0.2cm}

\section{ Pseudo Hausdorff measure of self-affine sets}

This section is devoted to proving Theorem \ref{th1.2} by considering the IFS $\{f_d\}_{d\in{\mathcal D}}$ satisfies and does not satisfy  the OSC separately. The following technical lemma is needed. We borrow the technique of its proof from \cite{Olsen08} for the self-similar case.

\begin{lemma}\label{t-6-0}
Let the IFS  $\{f_d\}_{d\in{\mathcal D}}$ satisfy the OSC. Then $\mathcal{H}_w^s(K\cap U) \le (\text{diam}_w U)^s$ for any subset $U$ in ${\mathbb R}^n$.
\end{lemma}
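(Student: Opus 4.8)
The plan is to reduce the statement to a sharp density estimate for the invariant measure $\sigma$ and then to run, in the pseudo-norm setting, the covering argument of \cite{Olsen08}. By Theorem \ref{t-3-1}, $K$ is an $s$-set with respect to $w$, so by Lemma \ref{t-3-2} one has $\sigma=(\mathcal{H}_w^s(K))^{-1}\mathcal{H}_w^s\!\restriction\!K$; put $c:=\mathcal{H}_w^s(K)\in(0,\infty)$ and $D:=\text{diam}_w K$. Since $\mathcal{H}_w^s(K\cap U)=\mathcal{H}_w^s(K\cap(U\cap K))$ while $\text{diam}_w(U\cap K)\le\text{diam}_w U$, we may assume $\emptyset\ne U\subseteq K$, and the assertion becomes $c\,\sigma(U)\le(\text{diam}_w U)^s$. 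Using the outer regularity of the finite Borel measure $\mathcal{H}_w^s\!\restriction\!K$ together with the uniform continuity of $w$ on compacta — which keeps pseudo-diameters from jumping when $U$ is enlarged to a small Euclidean neighbourhood of $\overline U$ — we may also assume $U$ open. Finally, covering $K$ by the $(\#\mathcal{D})^M$ cylinders of level $M$, each of pseudo-diameter $q^{-M/n}D$, gives $\mathcal{H}_{w,\,q^{-M/n}D}^{s}(K)\le(\#\mathcal{D})^M(q^{-M/n}D)^s=D^s$ (using $q^{s/n}=\#\mathcal{D}$), so $c\le D^s$; hence the inequality for $U$ with $\text{diam}_w U\ge D$ follows from the case $U=K$, and only $0<\text{diam}_w U<D$ remains.

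The self-similar data to record are these: iterating (\ref{e-3-2}) gives $\text{diam}_w K_{\mathbf i}=q^{-|\mathbf i|/n}D$, while (\ref{e-2-2}) gives $\mathcal{H}_w^s(K_{\mathbf i})=(\#\mathcal{D})^{-|\mathbf i|}c$, whence $\sigma(K_{\mathbf i})=(\#\mathcal{D})^{-|\mathbf i|}=(\text{diam}_w K_{\mathbf i})^s/D^s$; by Corollary \ref{c-3-0}, incomparable words yield $\mathcal{H}_w^s$-null, hence $\sigma$-null, intersections, so any antichain $\mathcal A\subset\Sigma^*$ with $K=\bigcup_{\mathbf i\in\mathcal A}K_{\mathbf i}$ obeys the Kraft-type identity $\sum_{\mathbf i\in\mathcal A}(\text{diam}_w K_{\mathbf i})^s=D^s$. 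With this in hand, the lemma is equivalent to the sharp density bound
$$\sigma(U)\le\left(\frac{\text{diam}_w U}{D}\right)^{s},$$
for then $\mathcal{H}_w^s(K\cap U)=c\,\sigma(U)\le c\,(\text{diam}_w U)^s/D^s\le(\text{diam}_w U)^s$ since $c\le D^s$.

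To prove this density bound one produces, for the open set $U$ and every $\delta>0$, a $\delta$-cover of $K\cap U$ by cylinders whose $s$-th powers of pseudo-diameters sum to at most $(1+o(1))(\text{diam}_w U)^s$ as $\delta\to0$. Rather than using the cylinders of a single level (which loses a constant, see below), one uses an \emph{adaptive} antichain $\mathcal A(U)$ that traces the shape of $U$: descending the cylinder tree, a word $\mathbf i$ is retained the first time $K_{\mathbf i}\subseteq U$ and is subdivided otherwise. Since $U$ is open and $\text{diam}_w K_{\mathbf i}\to0$ along every branch, the retained cylinders cover $K\cap U$, are pairwise $\mathcal{H}_w^s$-almost disjoint, lie inside $U$, and may be taken arbitrarily small; applying the Kraft identity to $\mathcal A(U)$, completed to a full antichain of $K$, gives $\sum_{\mathbf i\in\mathcal A(U)}(\text{diam}_w K_{\mathbf i})^s=D^s\sigma(U)$. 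That this quantity is $\le(\text{diam}_w U)^s$ is the substance of the argument, and it is here that the open set condition enters — through the genuine disjointness of the sets $f_{\mathbf i}(V)$ for an OSC open set $V$ with $K\subseteq\overline V$ (equivalently, the uniform discreteness of $\mathcal{D}_\infty$, Theorem \ref{t-3-5}): it forces the retained cylinders to occupy essentially disjoint room inside $U$, so their total $s$-content cannot exceed that of $U$.

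The main obstacle is the constant. The naive cover of $K\cap U$ by the level-$M$ cylinders meeting $U$ (with $q^{-M/n}D\le\text{diam}_w U$) only yields $\mathcal{H}_w^s(K\cap U)\le C\,(\text{diam}_w U)^s$, where $C$ reflects the number of such cylinders — a number that the OSC keeps bounded but not equal to $1$; it is precisely the adaptive subdivision, combined with the Kraft identity, that removes $C$. A second difficulty peculiar to the self-affine case is that $w$ satisfies only the quasi-triangle inequality of Lemma \ref{t-2-3}, so every comparison among $\text{diam}_w U$, the pseudo-diameters of its $w$-neighbourhoods, and those of the relevant cylinders carries a factor $\beta$; these losses must be absorbed in the limit of large cylinder level by invoking Lemma \ref{t-2-4} (additivity of $w$ with constant $1+\epsilon$ once the two scales are sufficiently separated), and this is what lets the sharp constant survive the passage from the Euclidean norm of \cite{Olsen08} to the pseudo norm.
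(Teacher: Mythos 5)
Your reduction sends the lemma to the bound $\sigma(U)\le\bigl(\mathrm{diam}_w U/D\bigr)^{s}$ with $D=\mathrm{diam}_w K$, and this is where the argument breaks: that bound is strictly \emph{stronger} than the lemma (by the factor $D^{s}/c\ge 1$) and is false in general. Indeed, by Theorem \ref{t-4-1} the upper convex density $D^{s}_{w,c}(K,x)$ equals $1$ at $\mathcal{H}^{s}_{w}$-a.e.\ $x\in K$, so there are arbitrarily small convex sets $U$ with $\mathcal{H}^{s}_{w}(K\cap U)/(\mathrm{diam}_w U)^{s}$ as close to $1$ as you like, i.e.\ $\sigma(U)/(\mathrm{diam}_w U)^{s}$ close to $1/c$. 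Your bound caps this ratio at $1/D^{s}$, so it can only hold when $c=\mathcal{H}^{s}_{w}(K)=D^{s}$ exactly — an equality that fails for typical examples (already for the Sierpi\'nski gasket in the self-similar case). Consistently with this, the one step you flag as ``the substance of the argument'' — that the adaptive antichain $\mathcal{A}(U)$ of cylinders contained in $U$ satisfies $\sum_{\mathbf{i}\in\mathcal{A}(U)}(\mathrm{diam}_w K_{\mathbf{i}})^{s}\le(\mathrm{diam}_w U)^{s}$ — is asserted, not proved, and the justification offered (disjointness forces the $s$-content of disjoint pieces of $U$ to be at most that of $U$) is not a valid principle: for $s<n$, disjoint subsets of $U$ can easily have $\sum(\mathrm{diam})^{s}$ exceed $(\mathrm{diam}\,U)^{s}$ (two well-separated subintervals of $[0,1]$ of length $0.4$ already do so for $s=1/2$). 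Since you correctly compute that this sum \emph{equals} $D^{s}\sigma(U)$, the inequality you need is literally the false density bound again; no amount of bookkeeping with $\beta$ and Lemma \ref{t-2-4} can recover it.

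The fix is to run the Olsen-type argument in the opposite direction, as the paper does: rather than packing cylinders of $K$ into $U$, propagate $U$ itself through the IFS. Assuming $\mathcal{H}^{s}_{w}(K\cap U)>(\mathrm{diam}_w U)^{s}$, cover $K$ by the $(\#\mathcal{D})^{m}$ sets $f_{\mathbf{i}}(U)$, $\mathbf{i}\in\Sigma^{m}$, together with a near-optimal $\delta$-cover of the leftover set $K\setminus\bigcup_{\mathbf{i}}f_{\mathbf{i}}(U)$. The OSC (via Lemma \ref{t-3-2} and Corollary \ref{c-3-0}) gives $\sum_{\mathbf{i}}\mathcal{H}^{s}_{w}(f_{\mathbf{i}}(K\cap U))=\mathcal{H}^{s}_{w}(K\cap U)$ with no loss from overlaps, while the Kraft identity $\sum_{\mathbf{i}\in\Sigma^{m}}r_{\mathbf{i}}^{s}=1$ prices the sets $f_{\mathbf{i}}(U)$ at exactly $(\mathrm{diam}_w U)^{s}$ in total; comparing the resulting cover of $K$ with $\mathcal{H}^{s}_{w}(K)$ then yields a strict self-improvement $\mathcal{H}^{s}_{w,\delta}(K)\le\mathcal{H}^{s}_{w}(K)-\tfrac{1}{2}\kappa\,\mathcal{H}^{s}_{w}(K\cap U)$, a contradiction as $\delta\to 0$. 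Note that this route needs none of your preliminary reductions ($U\subseteq K$, $U$ open, $c\le D^{s}$), and it never requires comparing diameters of unions, so the quasi-triangle inequality issues you anticipate do not arise.
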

\begin{proof}
We will prove the statement by a contradiction. Assume that there exists a subset $U\subset {\mathbb R}^n$ such that $\mathcal{H}_w^s(K\cap U) > (\text{diam}_w U)^s$. Then we can find some $0 < \kappa <1$ such that
$(1-\kappa)\mathcal{H}_w^s(K\cap U) > (\text{diam}_w U)^s$.
Fix $\delta > 0$ and choose a positive integer $m$ such that $\text{diam}_w f_{\mathbf{i}}(U) \le \delta$ for all words $\mathbf{i}\in\Sigma^ m$, where $\Sigma^ m$ is defined in Section 3. Note that
\begin{eqnarray}\label{e-6-1}
\bigcup_{\mathbf{i}\in\Sigma^m} f_{\mathbf{i}}(K\cap U) \subset K \cap \bigcup_{\mathbf{i}\in\Sigma^m} f_{\mathbf{i}}(U),
\end{eqnarray}
since $f_{\mathbf{i}}(K) \subset \bigcup_{\mathbf{j}\in\Sigma^m} f_{\mathbf{j}}(K) = K$ for each $\mathbf{i}\in\Sigma^m$. By the assumption that the IFS $\{f_d\}_{d\in{\mathcal D}}$ satisfies the OSC, then by using Theorem \ref{t-3-1} and Lemma \ref{t-3-2}, we have
${\mathcal H}_w^s(f_{\mathbf{i}}(K\cap U)) \cap f_{\mathbf{j}}(K\cap U)) = 0$ for distinct $\mathbf{i}, \mathbf{j}\in\Sigma^ m$. Therefore, by (\ref{e-6-1}), we obtain
\begin{eqnarray}\label{e-6-2}
{\mathcal H}_w^s(K\cap \bigcup_{\mathbf{i}\in\Sigma^m} f_{\mathbf{i}}(U)) &\ge& {\mathcal H}_w^s( \bigcup_{\mathbf{i}\in\Sigma^m} f_{\mathbf{i}}(K\cap U)) \nonumber \\
& = & \sum\limits_{\mathbf{i} \in\Sigma^ m}  {\mathcal H}_w^s  (f_{\mathbf{i}}(K\cap U)) = {\mathcal H}_w^s (K\cap U).
\end{eqnarray}
Defining $\eta = \frac{1}{2}\,\kappa\, {\mathcal H}_w^s( K\cap
\bigcup_{\mathbf{i}\in\Sigma^m} f_{\mathbf{i}}( U))$, it  follows from (\ref{e-6-2}) that
$$\eta \ge \frac{1}{2}\,\kappa\, {\mathcal H}_w^s( K\cap U) > \frac{1}{2}\kappa \frac{(\text{diam}_w U)^s}{1-\kappa} > 0.$$
For $\eta > 0$, we can choose a sequence of sets $\{U_i\}_i$ with $\bigcup\limits_i U_i \supseteq K\setminus \bigcup\limits_{\mathbf{i}\in\Sigma ^m} f_{\mathbf{i}}(U)$ and $\text{diam}_w(U_i) < \delta$ such that
\begin{eqnarray}\label{e-6-3}
\sum\limits_{i} (\text{diam}_w U_i)^s &\le&  {\mathcal H}_{w,\delta}^s (K\setminus \bigcup\limits_{\mathbf{i}\in\Sigma^m} f_{\mathbf{i}}(U)) + \eta \nonumber\\
&\le & {\mathcal H}_{w}^s(K\setminus \bigcup\limits_{\mathbf{i}\in\Sigma^m} f_{\mathbf{i}}(U)) + \eta.
\end{eqnarray}
The family $\{f_{\mathbf{i}} (U)\}_{\mathbf{i}\in\Sigma^ m} \cup \{U_i\}_i$ is clearly a $\delta$-cover of $K$ w.r.t. $w(x)$. Using the fact that $\sum\limits_{\mathbf{i}\in\Sigma ^ m} r_{\mathbf{i}}^s =1$ and (\ref{e-6-3}), we obtain that
\begin{eqnarray*}
{\mathcal H}_{w,\delta}(K) & \le & \sum\limits_{\mathbf{i}\in\Sigma^ m} (\text{diam}_w f_{\mathbf{i}}(U))^s + \sum\limits_i (\text{diam}_w U_i)^s \\
&\le & (\text{diam}_w U)^s + {\mathcal H}_{w}^s(K\setminus \bigcup\limits_{\mathbf{i}\in\Sigma^ m} f_{\mathbf{i}}(U)) + \eta  \\
&\le & (1 - \kappa)\, {\mathcal H}_{w}^s(K\cap U) + {\mathcal H}_{w}^s(K\setminus \bigcup\limits_{\mathbf{i}\in\Sigma^ m} f_{\mathbf{i}}(U)) + \eta \\
\end{eqnarray*}
Taking the inequality (\ref{e-6-2}) into account, this yields
\begin{eqnarray*}
{\mathcal H}_{w,\delta}(K)&\le & (1 - \kappa) {\mathcal H}_{w}^s(K\cap \bigcup\limits_{\mathbf{i}\in\Sigma^ m} f_{\mathbf{i}}(U)) + {\mathcal H}_{w}^s(K\setminus \bigcup\limits_{\mathbf{i}\in\Sigma^ m} f_{\mathbf{i}}(U)) + \eta \\
&\le & {\mathcal H}_{w}^s(K) - \kappa\, {\mathcal H}_w^s (K\cap \bigcup\limits_{\mathbf{i}\in\Sigma^ m} f_{\mathbf{i}}(U)) + \eta \\
& = &  {\mathcal H}_{w}^s(K) - \eta \\
&\le & {\mathcal H}_{w}^s(K) - \frac{1}{2}\,\kappa\, {\mathcal H}_{w}^s(K\cap U).
\end{eqnarray*}
Letting $\delta\rightarrow 0$, we get
\begin{eqnarray*}
{\mathcal H}_{w}^s(K)  \le {\mathcal H}_{w}^s(K) - \frac{1}{2}\kappa {\mathcal H}_{w}^s(K\cap U),
\end{eqnarray*}
which is a contradiction since $0 < {\mathcal H}_{w}^s(K) < \infty$ and $\frac{1}{2}\kappa {\mathcal H}_{w}^s(K\cap U)>0$.
\end{proof}

Lemma \ref{t-3-2} shows that if the IFS $\{f_d\}_{d\in\mathcal{D}}$ satisfies the OSC,
then the probability measure $\sigma$ in (\ref{e-3-1})
is a multiple of the restriction of the $s$-dimensional pseudo Hausdorff measure $\mathcal{H}_w^s$ to the set $K$, with $s=\dim_H^w K = n\,\ln(\# \mathcal{D})/\ln(q) $, i.e.
\begin{eqnarray}\label{e-6-4}
\sigma=(\mathcal{H}_w^s(K))^{-1}\mathcal{H}_w^s\restriction K.
\end{eqnarray}

Combining the formula (\ref{e-6-4}), Lemma \ref{t-6-0}, Theorem \ref{t-3-1} and Theorem \ref{t-4-1}, we obtain the following lemma.

\begin{lemma}\label{t-6-1}
Let $K:= K(A,\mathcal{D})$ be a self-affine set and let the IFS $\{f_d\}_{d\in\mathcal{D}}$ satisfy the OSC.
Then for any $r_0>0$,
$$
(\mathcal{H}_w^s(K))^{-1}=\sup\limits_{0<\text{diam}_w U\le r_0}\frac{\sigma(U)}{(\text{diam}_w U)^s},
$$
 where $s$ is the pseudo similarity dimension of $K$, $\sigma$ is defined by (\ref{e-3-1}) and the supremum is over all convex sets $U$ with $U\bigcap K\ne\emptyset$ and $0<\text{diam}_w U\le r_0$.
\end{lemma}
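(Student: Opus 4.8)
The plan is to turn the statement into a pointwise estimate for the ratio $\mathcal{H}_w^s(K\cap U)/(\operatorname{diam}_w U)^s$ and to squeeze it between $1$ from above (via Lemma \ref{t-6-0}) and $1$ from below (via the local density Theorem \ref{t-4-1}). By Theorem \ref{t-3-1} the OSC forces $0<\mathcal{H}_w^s(K)<\infty$, so $K$ is an $s$-set w.r.t.~$w(x)$; write $c=\mathcal{H}_w^s(K)$. By Lemma \ref{t-3-2} we have $\sigma=c^{-1}\mathcal{H}_w^s\restriction K$, hence for every convex set $U$ with $U\cap K\neq\emptyset$ and $0<\operatorname{diam}_w U\le r_0$,
$$\frac{\sigma(U)}{(\operatorname{diam}_w U)^s}=c^{-1}\,\frac{\mathcal{H}_w^s(K\cap U)}{(\operatorname{diam}_w U)^s}.$$

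For the inequality ``$\le$'' I would invoke Lemma \ref{t-6-0}, which gives $\mathcal{H}_w^s(K\cap U)\le(\operatorname{diam}_w U)^s$ for \emph{any} subset $U$, with no smallness restriction. Thus $\sigma(U)/(\operatorname{diam}_w U)^s\le c^{-1}$ for every admissible $U$, and taking the supremum yields $\sup_{0<\operatorname{diam}_w U\le r_0}\sigma(U)/(\operatorname{diam}_w U)^s\le c^{-1}=(\mathcal{H}_w^s(K))^{-1}$; note this already holds for every $r_0>0$.

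For the reverse inequality ``$\ge$'' I would use Theorem \ref{t-4-1}: since $K$ is an $s$-set, $D_{w,c}^s(K,x)=1$ for $\mathcal{H}_w^s$-almost every $x\in K$, and as $c>0$ such points exist; fix one, say $x_0\in K$. Writing $g(r)=\sup_{0<\operatorname{diam}_w U\le r,\,x_0\in U}\mathcal{H}_w^s(K\cap U)/(\operatorname{diam}_w U)^s$, this function is non-decreasing in $r$ and $\lim_{r\to 0}g(r)=D_{w,c}^s(K,x_0)=1$, so $g(r_0)\ge \lim_{r\to 0}g(r)=1$ for every $r_0>0$. Every convex $U$ containing $x_0$ meets $K$, so the supremum in the statement dominates $g(r_0)$; multiplying by $c^{-1}$ gives $\sup_{0<\operatorname{diam}_w U\le r_0}\sigma(U)/(\operatorname{diam}_w U)^s\ge c^{-1}=(\mathcal{H}_w^s(K))^{-1}$. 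Combining the two bounds proves the lemma.

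This argument is essentially a bookkeeping assembly of the previously established facts, so I do not expect a serious obstacle. The only points requiring a little care are that \emph{both} inequalities must be valid for every fixed $r_0>0$ rather than merely in the limit $r_0\to 0$ (hence the use of Lemma \ref{t-6-0} with no smallness hypothesis for the upper bound, and of the monotonicity built into the definition of the upper convex density for the lower bound), and that one only needs a single point of density one, whose existence follows from $\mathcal{H}_w^s(K)>0$ rather than from the full almost-everywhere statement.
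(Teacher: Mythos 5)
Your proposal is correct and follows essentially the same route as the paper: both use Theorem \ref{t-3-1} to see $K$ is an $s$-set, Lemma \ref{t-6-0} for the upper bound $\mathcal{H}_w^s(K\cap U)\le(\operatorname{diam}_w U)^s$, a density-one point from Theorem \ref{t-4-1} for the lower bound, and the identity $\sigma=(\mathcal{H}_w^s(K))^{-1}\mathcal{H}_w^s\restriction K$ to translate between $\sigma$ and $\mathcal{H}_w^s$. The only cosmetic difference is that the paper extracts an explicit sequence $U_n$ with ratio tending to $1$, whereas you invoke the monotonicity of the supremum in $r$; both are valid.
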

\begin{proof}

By applying Theorem \ref{t-3-1}, $K$ is an $s$-set w.r.t. $w(x)$. From Theorem \ref{t-4-1}, we can pick a point $x\in K$ such that $D_{w,c}^s(K,x)=1$. Then there exists a positive sequence $\{r_n\}_n$ with $r_n\leq r_0$, $r_n\rightarrow 0$ as $n\rightarrow\infty$ such that
$$\sup_{0<\text{diam}_w U\leq r_n, x\in U}\frac{\mathcal{H}_w^s(K\cap U)}{(\text{diam}_w U)^s}-\frac{1}{n}\leq 1\leq\sup_{0<\text{diam}_w U\leq r_n, x\in U}\frac{\mathcal{H}_w^s(K\cap U)}{(\text{diam}_w U)^s}+\frac{1}{n}. $$ For each $n$, there exists a convex set $U_n$ containing $x$ with $0<\text{diam}_w U_n\leq r_n$ such that
$$\sup_{0<\text{diam}_w U\leq r_n, x\in U}\frac{\mathcal{H}_w^s(K\cap U)}{(\text{diam}_w U)^s}\leq \frac{\mathcal{H}_w^s(K\cap U_n)}{(\text{diam}_w U_n)^s}+\frac{1}{n}.$$
Thus $$\frac{\mathcal{H}_w^s(K\cap U_n)}{(\text{diam}_w U_n)^s}-\frac{1}{n}\leq 1\leq \frac{\mathcal{H}_w^s(K\cap U_n)}{(\text{diam}_w U_n)^s}+\frac{2}{n},$$
which yields that $\frac{\mathcal{H}_w^s(K\cap U_n)}{(\text{diam}_w U_n)^s}\rightarrow 1$ as $n\rightarrow\infty$. Moreover, by Lemma \ref{t-6-0}, for each convex set $U$ with $K\cap U\neq \emptyset$, we have $\frac{\mathcal{H}_w^s(K\cap U)}{(\text{diam}_w U)^s}\leq 1$. Hence $\sup_{0<\text{diam}_wU\leq r_0}\frac{\mathcal{H}_w^s(K\cap U)}{(\text{diam}_w U)^s}=1$. By applying the formula  (\ref{e-6-4}) to the above equality,  the lemma follows.
\end{proof}



We have the following representation for the pseudo Hausdorff measure of self-affine sets.

\vspace{0.3cm}

\begin{theorem}\label{t-6-2}
Let $K:=(A,\mathcal{D})$ be a self-affine set and let $s: = n\,\ln(\# \mathcal{D})/\ln(q)$  be the pseudo similarity dimension of $K$. Then $\mathcal{H}_w^s(K)=(\mathcal{E}_{w,s}^+(\mu))^{-1}$, where $\mu$ is defined by (\ref{e-1-1}).
\end{theorem}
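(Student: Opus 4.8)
\textit{Proof proposal.} The plan is to first establish, with no separation assumption, the identity
$$
\mathcal{E}_{w,s}^+(\mu)=\sup_{U\ \text{convex},\ \text{diam}_w U>0}\frac{\sigma(U)}{(\text{diam}_w U)^s}=:S,
$$
where $\sigma$ is the self-affine measure of (\ref{e-3-1}), and then to evaluate $S$ separately in case (i), where the IFS $\{f_d\}_{d\in\mathcal{D}}$ satisfies the OSC, and case (ii), where it does not.

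For the identity I would pass to $\mu\ast\sigma$. By Lemma \ref{t-5-1}, $\mathcal{E}_{w,s}^+(\mu)=\mathcal{E}_{w,s}^+(\mu\ast\sigma)$. Set $\widetilde\mu_M=\sum_{d_0,\dots,d_{M-1}\in\mathcal{D}}\delta_{d_0+Ad_1+\dots+A^{M-1}d_{M-1}}$; padding a tuple with a trailing $0\in\mathcal{D}$ shows $\widetilde\mu_M\le\widetilde\mu_{M+1}$, so by monotone convergence $\widetilde\mu_M\ast\sigma\uparrow\mu\ast\sigma$. The computation carried out in the proof of Lemma \ref{t-5-2} (which sums over tuples, i.e.\ with multiplicity) gives $(\widetilde\mu_M\ast\sigma)(V)=(\#\mathcal{D})^M\,\sigma(A^{-M}V)$; combined with $q^{s/n}=\#\mathcal{D}$ and $\text{diam}_w(A^{-M}V)=q^{-M/n}\,\text{diam}_w V$ (Proposition \ref{t-2-1}(ii)), this yields, for every $V$ with $\text{diam}_w V>0$,
$$
\frac{(\mu\ast\sigma)(V)}{(\text{diam}_w V)^s}=\sup_{M\ge 0}\frac{\sigma(A^{-M}V)}{(\text{diam}_w A^{-M}V)^s}.
$$
The bound $\mathcal{E}_{w,s}^+(\mu\ast\sigma)\le S$ is then immediate, since each $A^{-M}V$ is convex with positive $w$-diameter. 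Conversely, given any convex $U_0$ with $\text{diam}_w U_0>0$ and any $r>0$, choosing $M$ with $q^{M/n}\,\text{diam}_w U_0\ge r$ and $V=A^M U_0$ gives $\text{diam}_w V\ge r$ and, reading off the $M$-th term above, $\frac{(\mu\ast\sigma)(V)}{(\text{diam}_w V)^s}\ge\frac{\sigma(U_0)}{(\text{diam}_w U_0)^s}$; hence $\mathcal{E}_{w,s}^+(\mu\ast\sigma)\ge S$. (A routine closure argument reconciles this with the compact-convex supremum of Definition \ref{d-5-1}.)

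In case (i), Theorem \ref{t-3-1} makes $K$ an $s$-set w.r.t.\ $w(x)$, and Lemma \ref{t-6-1} asserts that for every $r_0>0$ one has $\sup_{0<\text{diam}_w U\le r_0}\frac{\sigma(U)}{(\text{diam}_w U)^s}=(\mathcal{H}_w^s(K))^{-1}$ (convex sets disjoint from $K$ contribute $0$ and may be admitted). Since this value does not depend on $r_0$, letting $r_0\to\infty$ gives $S=(\mathcal{H}_w^s(K))^{-1}$, that is, $\mathcal{H}_w^s(K)=(\mathcal{E}_{w,s}^+(\mu))^{-1}$.

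In case (ii), note first that $\mathcal{H}_w^s(K)\le(\text{diam}_w K)^s<\infty$ always: covering $K$ by the $(\#\mathcal{D})^M$ cylinders $K_{\mathbf i}$, $\mathbf i\in\Sigma^M$, which satisfy $\text{diam}_w K_{\mathbf i}=q^{-M/n}\,\text{diam}_w K$ by (\ref{e-3-2}), and using $q^{-Ms/n}=(\#\mathcal{D})^{-M}$, one gets $\mathcal{H}_{w,\delta}^s(K)\le(\text{diam}_w K)^s$ for all small $\delta$. Since the OSC fails, Theorem \ref{th1.3} forbids $0<\mathcal{H}_w^s(K)<\infty$, so $\mathcal{H}_w^s(K)=0$. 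If $S$ were finite, say $S=C$, then $\sigma(U)\le C(\text{diam}_w U)^s$ for every convex $U$; using Proposition \ref{t-2-2} and Lemma \ref{t-2-3} one checks that $\text{diam}_w(\text{conv}(E))\le C_0\,\text{diam}_w E$ for all $E$, with $C_0$ depending only on $A$, so the mass distribution principle gives $1=\sigma(K)\le C\,C_0^s\,\mathcal{H}_w^s(K)=0$, a contradiction; hence $S=\mathcal{E}_{w,s}^+(\mu)=\infty=(\mathcal{H}_w^s(K))^{-1}$. (When moreover $\#\mathcal{D}_{M_0}<(\#\mathcal{D})^{M_0}$ for some $M_0$, one sees $S=\infty$ concretely: picking distinct $\mathbf d,\mathbf d'\in\Sigma^{M_0}$ with $f_{\mathbf d}=f_{\mathbf d'}$, the piece $K^{(k)}=f_{\mathbf d}^{\circ k}(K)$ has $\sigma(K^{(k)})\ge 2^k(\#\mathcal{D})^{-kM_0}$ while $(\text{diam}_w K^{(k)})^s=(\#\mathcal{D})^{-kM_0}(\text{diam}_w K)^s$, so its convex hull forces the supremum to diverge.) The main obstacle is the unconditional identity: the scaling relation between small-scale densities of $\sigma$ and large-scale densities of $\mu\ast\sigma$ must be tracked carefully, and it is essential that $\mu$ in (\ref{e-1-1}) is counted with multiplicity, since that is exactly what makes $(\widetilde\mu_M\ast\sigma)(V)=(\#\mathcal{D})^M\sigma(A^{-M}V)$ an exact equality.
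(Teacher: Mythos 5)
Your proposal is correct in substance and coincides with the paper's argument in the OSC case, but it handles the non-OSC case by a genuinely different route. For case (i) you do exactly what the paper does: combine Lemma \ref{t-5-1}, Lemma \ref{t-5-2} and Lemma \ref{t-6-1}, and pass between small-scale densities of $\sigma$ and large-scale densities of $\mu*\sigma$ by conjugating with powers of $A$; your formulation $(\mu*\sigma)(V)/(\mathrm{diam}_w V)^s=\sup_{M}\sigma(A^{-M}V)/(\mathrm{diam}_w A^{-M}V)^s$ is a clean packaging of the two inequalities the paper proves separately, and your remark that $\mu_M$ must be counted with multiplicity (so that Lemma \ref{t-5-2} is an exact identity even without OSC) is a genuine improvement in precision over the paper's statement of that lemma. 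The divergence is in case (ii): the paper invokes condition (iii) of Theorem \ref{th1.3} and explicitly constructs either points $a_k$ carrying $\mu$-mass at least $2^k$ (when the $(\#\mathcal D)^M$ expansions collide) or clusters $F_k$ of $2^k$ points of $\mathcal D_\infty$ inside a unit cube (when uniform discreteness fails), which forces $\mathcal E_{w,s}^+(\mu)=\infty$ directly; you instead establish the identity $\mathcal E_{w,s}^+(\mu)=\sup_U\sigma(U)/(\mathrm{diam}_w U)^s$ unconditionally and then rule out finiteness of the right-hand side by a mass-distribution-principle contradiction with $\mathcal H_w^s(K)=0$. Your route is more unified, but two of its supporting steps deserve more care than your parentheticals suggest. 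First, the comparison $\mathrm{diam}_w(\mathrm{conv}(E))\le C_0\,\mathrm{diam}_w E$ does \emph{not} follow from Proposition \ref{t-2-2} alone: the two-sided power bounds there have mismatched exponents (ratio $\ln\lambda_{\max}/\ln\lambda_{\min}$), so chaining them loses a power of the diameter. The inequality is nevertheless true, but the correct proof uses the homogeneity $w(Ax)=q^{1/n}w(x)$ together with compactness of $\{w\le 1\}$: the convex hull of $\{w\le 1\}$ sits inside $\{w\le C_1\}$, and applying $A^k$ and Lemma \ref{t-2-3} transfers this to all scales. Second, the mass distribution principle as you apply it needs every covering set to satisfy $\sigma(U_i)\le C\,C_0^s(\mathrm{diam}_w U_i)^s$, which fails if $\sigma$ has an atom and some $U_i$ is a singleton; this is harmless because an atom of $\sigma$ forces $S=\infty$ outright, but it should be said. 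With those two points repaired your proof is complete, and arguably cleaner than the paper's case (ii) since it does not need the explicit combinatorial constructions from condition (iii) of Theorem \ref{th1.3}.
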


\begin{proof} Let us assume first that $\mathcal{H}_w^s(K)>0$ and thus that the OSC holds by Theorem \ref{th1.3}. By Lemma \ref{t-6-1}, it is sufficient  to prove that
$$\mathcal{E}_{w,s}^+(\mu)=\sup\limits_{0<\text{diam}_w U\le r_0}\frac{\sigma(U)}{(\text{diam}_w U)^s}$$
for some $r_0>0$,  where the supremum is over all convex sets $U$ with $U\cap K\ne\emptyset$ and  $0<\text{diam}_w U\le r_0$.

Fix $r_0>0$. It follows from Lemma \ref{t-6-1} that
 $\sup\limits_{0<\text{diam}_w U\le r_0}\frac{\sigma(U)}{(\text{diam}_w U)^s}$ is finite.
Then, for any given $\varepsilon>0$, there exists a convex set $U_0$ with $\text{diam}_wU_0\le r_0$ and  $U_0\cap K\ne\emptyset$ such that
\begin{eqnarray}\label{e-6-7}
\frac{\sigma(U_0)}{(\text{diam}_w U_0)^s}\ge \sup\limits_{0<\text{diam}_w U\le r_0}\frac{\sigma(U)}{(\text{diam}_w U)^s}-\varepsilon.
\end{eqnarray}
For any $N\geq 1$, define $\mu_N=\sum\limits_{d_0,\dotsc,d_{N-1}\in\mathcal{D}}\delta_{d_0+Ad_1+\dotsb+A^{N-1}d_{N-1}}$.
Using Lemma \ref{t-5-1} and Lemma \ref{t-5-2}, we have
\begin{eqnarray}\label{e-6-8}
\frac{\sigma(U_0)}{(\text{diam}_w U_0)^s}&=&\frac{\sigma*\mu_N(A^NU_0)}{(\text{diam}_w (A^NU_0))^s}=
\lim_{N\to \infty}\,\frac{\sigma*\mu_N(A^NU_0)}{(\text{diam}_w (A^NU_0))^s}\nonumber\\
&\le& \lim_{r\to \infty}\sup_{\text{diam}_w U\ge r>0}\frac{\sigma*\mu(U)}{(\text{diam}_w U)^s}\nonumber\\
&=&\mathcal{E}_{w,s}^+(\sigma*\mu)=\mathcal{E}_{w,s}^+(\mu).
\end{eqnarray}
It follows from (\ref{e-6-7}) and (\ref{e-6-8}) that
$$\sup\limits_{0<\text{diam}_w U\le r_0}\frac{\sigma(U)}{(\text{diam}_w U)^s}\le \mathcal{E}_{w,s}^+(\mu)+\varepsilon.$$
By letting $\varepsilon\rightarrow 0$, we get
$$\sup\limits_{0<\text{diam}_w U\le r_0}\frac{\sigma(U)}{(\text{diam}_w U)^s}\le \mathcal{E}_{w,s}^+(\mu).$$

Conversely, for any given convex set $U$, using Lemma \ref{t-5-2}, we have,
\begin{eqnarray*}
\frac{\sigma*\mu(U)}{(\text{diam}_w U)^s}&=&\lim\limits_{N\to\infty}\frac{\sigma*\mu_N(U)}{(\text{diam}_w U)^s}=\lim\limits_{N\to\infty}\frac{\sigma(A^{-N}U)}{(\text{diam}_w (A^{-N} U))^s}\\
&\le&\sup\limits_{0<\text{diam}_w V\le r_0}\frac{\sigma(V)}{(\text{diam}_w V)^s}.
\end{eqnarray*}
Using Lemma \ref{t-5-1} again, we have thus that
$$\mathcal{E}_{w,s}^+(\mu)=\mathcal{E}_{w,s}^+(\mu*\sigma)\le \sup\limits_{0<\text{diam}_w V\le r_0}\frac{\sigma(V)}{(\text{diam}_w V)^s}.$$
Thus we have proved the desired result in the case that $\mathcal{H}_w^s(K)>0$.

\vspace{0.2cm}

 On the other hand, if $\mathcal{H}_w^s(K)=0$, then the
 IFS $\{f_d\}_{d\in\mathcal{D}}$ does not satisfy the OSC by Theorem \ref{th1.3}.
 Thus, by Theorem \ref{th1.3}, either the $(\#{\mathcal D})^M$
 expansions in ${\mathcal D}_M$ are not
 distinct for some  $M$ or ${\mathcal D}_{\infty}$ is not uniformly discrete.
For $z\in\mathbb{R}^n$, we will use
$$I_k (z) = \{y=(y_1, \dotsc, y_n)\in{\mathbb R}^n: |y_i - z_i|\le \frac{k}{2}, i = 1, 2,\dotsc, n\}$$
to denote the cube  centered at $z=(z_1, \dotsc, z_n)\in{\mathbb R}^n$  with side length $k$.

Assume first that there exists some $M$ such that the $(\#{\mathcal D})^M$ expansions in ${\mathcal D}_M$ are not distinct. Then there exists $a\in{\mathcal D}_M$ which can be represented in two different ways in terms  of the digits in ${\mathcal D}$, i.e.
\begin{eqnarray*}
a = \sum\limits_{j=0}^{M-1}\, A^j\,d_j = \sum\limits_{j=0}^{M-1}\, A^j\,d_j^{\prime}, \quad d_j, d_j^{\prime} \in {\mathcal D},
\end{eqnarray*}
with $d_j\ne d_j^{\prime}$  for  at  least  one  $0\le j\le M-1$. Then $a + A^M a$ has at least four distinct expansions in ${\mathcal D}_{2M}$. More generally, for  $k\geq 1$, $\sum\limits_{j=0}^{k-1} A^{Mj}a$ has at least $2^k$ distinct expansions in  ${\mathcal D}_{kM}$. Then, if  $a_k = \sum\limits_{j=0}^{k-1} A^{Mj}a$, then $\mu(\{a_k\}) \ge 2^k$.  Then, for any  $r > 0$,
we have
$$
\frac{\mu(I_r(a_k))}{(\text{diam}_w I_r(a_k))^s} \geq \frac{2^k}{(\text{diam}_w I_r(0))^s}\to \infty,\quad k\to
\infty,
$$
This implies that $\sup\limits_{\text{diam}_w U \ge r>0}\frac{\mu(U)}{(\text{diam}_w U)^s} = \infty$  for any $r>0$, and in particular, ${\mathcal E}_{w,s}^+(\mu) = \infty$.

Next, assume that  $\#\mathcal{D}_M=(\#\mathcal{D})^M$ holds for each $M\geq 1$, but ${\mathcal D}_{\infty}$ is not a uniformly discrete set. Then there exists $M_1\ge 1$ and $x_1, y_1\in {\mathcal D}_{M_1} \subseteq {\mathcal D}_{\infty}$ with $x_1\ne y_1$ such that $\|x_1 - y_1\| < \frac{1}{2}$.  Write $F_1 = \{x_1, y_1\}$ and $w_1 = x_1$. Then $F_1\subset {\mathcal D}_{k_1}\subset {\mathcal D}_{\infty}$ and $\|z_1 - w_1\| < \frac{1}{2}$ for any $z_1\in F_1$. Let $S_1 = 0$.  Inductively, for $k\ge 2$, assume that $M_j, S_j$ and $x_j, y_j\in {\mathcal D}_{M_j}$, $F_j \subset {\mathcal D}_{S_j+M_j}$ have been defined for $1\le j \le k-1$. Let $S_k = \sum\limits_{j=1}^{k-1} M_j$. Choose $M_k$ and  $x_k, y_k\in {\mathcal D}_{M_k}\subset {\mathcal D}_{\infty}$ with $x_k\ne y_k$ and $\|x_k - y_k\| < \frac{1}{2^k\|A\|^{S_k}}$. Write
\begin{eqnarray*}
&& F_k = \{z_1 + A^{S_2}z_2 + \cdots + A^{S_k}z_k: z_i \in \{x_i, y_i\}, 1\le i \le k\}, \\
&& w_k = x_1 + A^{S_2}x_2 + \cdots + A^{S_k}x_k.
\end{eqnarray*}
Then $F_k\subset {\mathcal D}_{S_k+M_k}\subset {\mathcal D}_{\infty}$, $w_k\in {\mathcal D}_{S_k+M_k}$. Thus for any $k\geq 1$, $z\in F_k$,  we have
\begin{eqnarray*}
\|z - w_k\| &=& \|(z_1 - x_1) + A^{S_2}(z_2 - x_2) + \cdots + A^{S_k}(z_k - x_k)\|\\
&\le& \frac{1}{2} + \|A\|^{S_2}\frac{1}{2^2\|A\|^{S_2}} + \cdots \|A\|^{S_k}\frac{1}{2^k\|A\|^{S_k}}\\
&<& 1.
\end{eqnarray*}
This shows that $\mu(I_2(w_k)) \ge 2^k$. Hence, for any $r\ge 2$,  we have $I_2(w_k) \subset I_r(w_k)$ and
$$
\frac{\mu(I_r(w_k))}{(\text{diam}_w I_r(w_k))^s}\geq \frac{2^k}{(\text{diam}_w I_r(0))^s}\to \infty,\quad k\to
\infty.
$$
So ${\mathcal E}_{w,s}^+(\mu) = \infty$ as before.

 Therefore, we always have $\mathcal{H}_w^s(K)=(\mathcal{E}_{w,s}^+(\mu))^{-1}$.
\end{proof}

\end{document}